
%
\documentclass[reqno]{amsart}
\usepackage{cite}
\allowdisplaybreaks
\date{\today}

\pagenumbering{arabic} \numberwithin{equation}{section}

\newcommand{\dv}{\mathrm{div}\,}

\newtheorem{Theorem}{Theorem}[section]
\newtheorem{Lemma}{Lemma}[section]

\newtheorem{Remark}{Remark}[section]


\begin{document}

\title[incompressible chemotaxis-Navier-Stokes system]
 {Uniform Regularity and Vanishing Viscosity\\
  limit for the chemotaxis-Navier-Stokes system \\
  in a 3D bounded domain}

 \author[Zhipeng Zhang]{Zhipeng Zhang}
\address{Department of Mathematics, Nanjing University, Nanjing
 210093, P.R. China}
 \email{zhpzhp@aliyun.com}

\begin{abstract}
We investigate the uniform regularity and vanishing viscosity limit for the incompressible chemotaxis-Navier-Stokes system in a smooth bounded domain $\Omega\subset\mathbb{R}^3$. It is shown that there exists a unique strong solution of the incompressible chemotaxis-Navier-Stokes system in a finite time interval which is independent of the viscosity coefficient.
Moreover, the solution is uniformly bounded in a conormal Sobolev space, which allows us to take the vanishing viscosity limit to
obtain the incompressible inviscid chemotaxis-Navier-Stokes system.
 \end{abstract}

\keywords{Incompressible chemotaxis-Navier-Stokes system, Conormal Sobolev space, Vanishing viscosity limit,
Navier boundary conditions}
\subjclass[2000]{35Q30, 76D03, 76D05, 76D07}
\maketitle
\section{Introduction}
Chemotaxis is a biological process in which cells or bacteria move towards a chemically more favorable environment. For example, bacteria move towards higher concentration of oxygen which they consume. A typical model describing chemotaxis is the Keller-Segel equations derived by Keller and Segel in \cite{KS} which have been studied extensively. In nature, bacteria often live in a viscous
fluid so that a convective transport of both cells and chemicals is happened through the fluid, and meanwhile a gravitation effect on the motion of the fluid is produced by the heavier bacteria. Thus, this interaction become more complicated since we not only pay attention to chemotaxis and diffusion but also transport and fluid dynamics. To describe the above biological phenomena, Tuval et al in \cite{TC} proposed the following model
\begin{align}
 &n_t+u\cdot\nabla n=\epsilon_1\Delta n-\nabla\cdot(k(c)n\nabla c),\label{1.1.1.1}\\
 &c_t+u\cdot\nabla c=\epsilon_2\Delta c-f(c) n ,\label{1.1.1.2}\\
 &u_t+u\cdot\nabla u+\nabla p=\epsilon_3 \Delta u-n\nabla\phi,\label{1.1.1.3}\\
 &\nabla\cdot u=0\label{1.1.1.4}
 \end{align}
in $(0,T)\times\Omega$. The unknowns in \eqref{1.1.1.1}--\eqref{1.1.1.4} are $n(t,x)$, $c(t,x)$, $u(t,x)$ and $p(t,x)$, denoting
the cell density, chemical concentration, velocity field and  pressure of the fluid, respectively. The pressure $p(t,x)$ in \eqref{1.1.1.3} can be recovered from $n$ and $u$ via an explicit Calder¨®n-Zygmund singular integral operator\cite{Ch}. The nonnegative functions $f(c)$ and $k(c)$ denote the chemical consumption rate and chemotaxis sensitivity. The given function $\phi$ represents the potential function produced by different physical mechanism, such as the gravitational force or centrifugal force. $\epsilon_i\,(i=1,2)$ are the corresponding diffusion coefficients for the cells and chemicals, and $\epsilon_3$ is the viscous coefficient for the fluid.

Due to the significance of the biological background, this model has been studied extensively and the main focus is on the solvability, see \cite{AL,FZ,JA,MKJ,RAP,MW,QX} and the references cited therein. Especially, Lorz \cite{AL} showed the local existence of weak solution for the above model in three bounded domain. Duan, Lorz and Markowich \cite{RAP} obtained the global existence of the solution of the system \eqref{1.1.1.1}-\eqref{1.1.1.4} and the time decay rates of the classical solution near constant states in $\mathbb{R}^3$. In \cite{MKJ}, Chae, Kang, and Lee proved the local well-posedness and blow up criterion of the smooth solution for the chemotaxis-Navier-Stokes system in $\mathbb{R}^d\, (d=2,3)$ and the global existence of the classical solution in $\mathbb{R}^2$ under some assumptions on the consumption rate and the chemotaxis sensitivity.

However, the research on the uniform regularity and vanishing viscosity limit for the system \eqref{1.1.1.1}-\eqref{1.1.1.4} is very limited. To the best of knowledge of the author, the only result is given by Zhang \cite{QZ1}. He proved the the inviscid limit of the 3D chemotaxis-Navier-Stokes system in the whole space and established the convergence rate.
From the biological point of review, it is more interesting to study this problem in a  bounded domain.

The purpose  of this paper is to investigate the uniform regularity and vanishing viscosity limit for the following  chemotaxis-Navier-Stokes system
\begin{align}
 &n^\epsilon_t+u^\epsilon\cdot\nabla n^\epsilon=\Delta n^\epsilon-\nabla\cdot(n^\epsilon\nabla c^\epsilon),\label{1.1.1}\\
 &c^\epsilon_t+u^\epsilon\cdot\nabla c^\epsilon=\Delta c^\epsilon-c^\epsilon n^\epsilon ,\label{1.1.2}\\
 &u^\epsilon_t+u^\epsilon\cdot\nabla u^\epsilon+\nabla p^\epsilon=\epsilon \Delta u^\epsilon-n^\epsilon\nabla\phi,\label{1.1.3}\\
 &\nabla\cdot u^\epsilon=0,\label{1.1.4}
 \end{align}
in $(0,T)\times\Omega$. Here, $\Omega$ is a smooth bounded domain of $\mathbb{R}^3$. The chemotaxis-Navier-Stokes system \eqref{1.1.1}-\eqref{1.1.4} is considered under the initial condition
\begin{align}\label{1.1.5}
(n^\epsilon,c^\epsilon,u^\epsilon)|_{t=0}=(n_0^\epsilon,c_0^\epsilon,u_0^\epsilon)
\end{align}
and the homogeneous boundary condition of Neumann type for $n^\epsilon$ and $c^\epsilon$
\begin{align}\label{1.2.1}
\frac{\partial n^\epsilon}{\partial\nu}=\frac{\partial c^\epsilon}{\partial\nu}=0,
\end{align}
where $\nu$ stands for the outward unit normal vector to $\Omega$,
and the Navier boundary condition for $u^\epsilon$ as
\begin{align}
&u^\epsilon\cdot \nu=0,\quad (Su^\epsilon\cdot \nu)_\tau=-\zeta u^\epsilon_\tau\quad\text{on}~~~~\partial\Omega,\label{1.2}
\end{align}
where $\zeta$ is a coefficient measuring the tendency of the fluid to slip on the boundary, $S$ is the strain tensor defined by
\begin{equation}
Su^\epsilon=\frac{1}{2}(\nabla u^\epsilon+(\nabla u^\epsilon)^t),\nonumber
\end{equation}
$(\nabla u^\epsilon)^t$ denotes the transpose of the matrix $\nabla u^\epsilon$, and $u_\tau^\epsilon$ stands for the tangential part of $u^\epsilon$ on $\partial\Omega$, i.e.
\begin{equation}
u^\epsilon_\tau=u^\epsilon-(u^\epsilon\cdot \nu)\nu.\nonumber
\end{equation}
 The boundary condition \eqref{1.2}
was introduced by Navier in \cite{NC} to show that the velocity is propositional to the tangential part of the stress.
It allow the fluid slip along the boundary and is often used to model rough boundaries.

We point out that when $n^\epsilon=c^\epsilon=0$ in the system \eqref{1.1.1}-\eqref{1.1.4}, it is reduced to the classical incompressible Navier-Stokes equations
\begin{align}
&u^\epsilon_t+u^\epsilon\cdot\nabla u^\epsilon+\nabla p^\epsilon=\epsilon\Delta u^\epsilon,\label{N1}\\
&\nabla\cdot u^\epsilon=0.\label{N2}
\end{align}
There are lots of results on the inviscid limit to the incompressible Navier-Stokes equations, see \cite{BS,HB,HB1,HC,IS,KT,Ma,MN,MR,OS,SH,SC1,SC2,XZ1} and the references therein. When the incompressible Navier-Stokes equations \eqref{N1}-\eqref{N2} are supplemented with the boundary condition
\begin{align}
 u^\epsilon\cdot \nu=0,\quad \nu\times\omega_u^\epsilon=0\quad\text{on}\quad\partial\Omega,
 \end{align}
 where $\omega_u^\epsilon=\nabla\times u^\epsilon$,
Xiao and Xin \cite{XZ1} obtained the local existence of strong solution with some uniform bounds in $H^3(\Omega)$ and the vanishing viscosity limit. Subsequently, their result was extended to $W^{k,p}(\Omega)$  in \cite{HB}. The main reason is that the boundary integrals vanishes on flat portions of the boundary, see also\cite{HB1,HC}.
Later, the results in \cite{XZ1,HB} were generalized by Berselli and Spirito  \cite{BS} to  a general bounded domain  under certain restrictions on the initial data.
Recently, Masmoudi and Rousset \cite{MR} considered the uniform regularity and vanishing viscosity limit for the incompressible Navier-Stokes equations \eqref{N1}-\eqref{N2}
with the Navier boundary condition \eqref{1.2} in the anisotropic conormal Sobolev spaces which will be defined below.

Motivated by the ideas of \cite{MR}, in this paper, we investigate the uniform regularity of the solution to the problem \eqref{1.1.1}-\eqref{1.2} in
the anisotropic conormal Sobolev spaces and take the inviscid limit $\epsilon\rightarrow0$ to obtain the following limit system (Assume that $(n^\epsilon,c^\epsilon,u^\epsilon)$ converge to $(n^0,c^0,u^0)$ in some sense.)
\begin{align}
 &n_t^0+u\cdot\nabla n^0=\Delta n^0-\nabla\cdot(n^0\nabla c^0),\label{1.7.1}\\
 &c_t^0+u\cdot\nabla c^0=\Delta c^0-c^0 n^0 ,\label{1.7.2}\\
 &u_t^0+u\cdot\nabla u^0+\nabla p^0=-n^0\nabla\phi,\label{1.7.3}\\
 &\nabla\cdot u^0=0,\label{1.7.4}
 \end{align}
in $(0,T)\times\Omega$ with the initial and boundary conditions
\begin{align}
&(n^0,c^0,u^0)|_{t=0}=(n_0,c_0,u_0),\label{1.15.1}\\
&u^0\cdot \nu=0,\quad \frac{\partial n^0}{\partial\nu}=\frac{\partial c^0}{\partial\nu}=0\quad \text{on}\quad \partial\Omega.\label{1.15}
\end{align}

Before stating our main results, we first introduce the notations and conventions used throughout this paper. We assume that $\Omega$  has a covering such that
\begin{equation}\label{c}
\Omega\subset\Omega_0 \cup_{k=1}^n\Omega_k,
 \end{equation}
where $\overline{\Omega_0}\subset\Omega$ and in each $\Omega_k$ there exists a function $\psi_k$ such that
\begin{align*}
\Omega\cup\Omega_k=\{\,x=(x_1,x_2,x_3)\,|\,x_3>\psi_k(x_1,x_2)\,\}\cup\Omega_k,\\
\partial\Omega\cup\Omega_k=\{\,x=(x_1,x_2,x_3)\,|\,x_3=\psi_k(x_1,x_2)\,\}\cup\Omega_k.
\end{align*}
We say that $\Omega$ is $\mathcal{C}^m$ if the functions $\psi_k$ are $\mathcal{C}^m$-functions.

To define the conormal Sobolev spaces, we consider $(Z_k)_{1\leq k\leq N}$, a finite set of generators of vector fields that are tangent to $\partial\Omega$, and set
\begin{align}
H^m_{co}(\Omega):=\big{\{} f\in L^2(\Omega)\,\big{|}\, Z^If\in L^2(\Omega)~~~ \text{for}~~\, |I|\leq m,\,\,\, m\in\mathbb{N}\big{\}},
\end{align}
where $I=(k_1,...,k_m)$, $Z^I:=Z_{k_1}\cdot\cdot \cdot Z_{k_m}$. We define the norm of $H^m_{co}(\Omega)$ as
$$ \|f\|^2_m:=\sum_{|I|\leq m}\|Z^If\|^2_{L^2}.$$
We say a vector field, $u$, is in $H^m_{co}(\Omega)$ if each of its components is in $H^m_{co}(\Omega)$ and
$$ \|u\|^2_m:=\sum_{i=1}^3\sum_{|I|\leq m}\|Z^Iu_i\|^2_{L^2}$$
is finite. In the same way, we set
$$ \|f\|_{m,\infty}:=\sum_{|I|\leq m}\|Z^If\|_{L^\infty},$$
$$\|\nabla Z^mf\|^2:=\sum_{|I|=m}\|\nabla Z^If\|^2_{L^2},$$
and we say that $f\in W^{m,\infty}_{co}(\Omega)$ if $\|f\|_{m,\infty}$ is finite.
By using the above covering of $\Omega$, we can assume that each vector field is supported in one of $\{\Omega_i\}_{i=0}^n$. Also, we note that the $\|\cdot\|_{m}$ norm yields a control of the standard $H^m$ norm in $\Omega_0$, whereas if $\Omega_i \cap \partial \Omega\neq {\emptyset}$, there is no control of the
normal derivatives.

Since $\partial\Omega$ is given locally by $x_3=\psi(x_1,x_2)$ (We omit the subscript $k$ for notational convenience), it is convenient to use the coordinates:
\begin{align}\label{1.18}
\Psi:(y,z)\mapsto(y,\psi(y)+z)=x.
\end{align}
A local basis is thus given by the vector fields $(\partial_{y^1},\partial_{y^1},\partial_z)$ where $\partial_{y^1}$ and $\partial_{y^2}$ are tangent to $\partial\Omega$ on the boundary
and in general $\partial_z$ is usually not a normal vector field. We sometimes use the notation $\partial_{y^3}$ for $\partial_z$. By using this parametrization, we can take suitable vector fields compactly supported in $\Omega_i$ in the definition of the $\|\cdot\|_m$ norms:
$$Z_i=\partial_{y^i}=\partial_i+\partial_i\psi\partial_z,~~i=1,2,\quad Z_3=\varphi(z)\partial_z,$$
where $\varphi(z)=\frac{z}{1+z}$ is a smooth and supported function in $[0,+\infty)$ and satisfies
$$\varphi(0)=0,~~\varphi'(0)>0,~~\varphi(z)>0\,\,\,\,\text{for}\,\,\,\,z>0.$$

In this paper, we shall still denote by $\partial_i, i=1,~2,~3$ or $\nabla$ the derivatives with respect to the standard coordinates of $\mathbb{R}^3$. The coordinates of a vector field $u$ in the basis $(\partial_{y^1},\partial_{y^1},\partial_z)$ will be denote by $u^i$, thus
$$u=u^1\partial_{y^1}+u^2\partial_{y^2}+u^3\partial_z.$$
We denote by $u_i$ the coordinates in the standard basis of $\mathbb{R}^3$, i.e.
$$u=u_1\partial_1+u_2\partial_2+u_3\partial_3.$$

The unit outward normal vector $\nu$ is given locally by
$$\nu(x)=\nu(\Psi(y,z)):=\frac{1}{\sqrt{1+|\nabla\psi(y)|^2}}
\left(
\begin{array}{c}
\partial_1\psi(y)\\
 \partial_2\psi(y)\\
 -1
\end{array}
\right),\quad N(x):=\sqrt{1+|\nabla\psi(y)|^2}\,\nu(x)$$
and denote by $\Pi$ the orthogonal projection
$$\Pi(x)u=\Pi(\Psi(y,z))u:=u-[u\cdot \nu(\Psi(y,z))]\nu(\Psi(y,z) $$
which gives the orthogonal projector onto the tangent space of the boundary. Note that both $\nu$ and $\Pi$ are defined in the whole $\Omega_k$ and do not depend on $z$.
By using these notations, the Navier boundary condition \eqref{1.2} reads
\begin{align}
u^\epsilon\cdot \nu=0, \quad \ \Pi\partial_\nu u^\epsilon=\theta(u^\epsilon)-2\zeta\Pi u^\epsilon,\label{1.12}
\end{align}
where $\theta$ is the shape operator (second fundamental form) of the boundary, $$\theta(u^\epsilon):=\Pi((\nabla \nu)u^\epsilon).$$

For later use and notational convenience, we set
\begin{align}
\mathcal{Z}^\alpha=\partial_t^{\alpha_0}Z^{\alpha_1}=\partial_t^{\alpha_0}Z^{\alpha_{11}}_1Z^{\alpha_{12}}_2Z^{\alpha_{13}}_3,
\end{align}
and we also use the following notations
\begin{align}
\|f(t)\|_{\mathcal{H}^m}^2:=\sum_{|\alpha|\leq m}\|\mathcal{Z}^\alpha f(t)\|^2_{L^2_x},\quad\|f(t)\|_{\mathcal{H}^{m,\infty}}^2:=\sum_{|\alpha|\leq m}\|\mathcal{Z}^\alpha f(t)\|^2_{L^\infty_x}
\end{align}
for smooth time-space function $f(t,x)$.

Throughout the paper, we shall denote by $\|\cdot\|_{H^m }$ and $\|\cdot\|_{W^{m,\infty} }$ the standard Sobolev norms
in $\Omega$ and the notation $|\cdot|_{H^m}$ will be used for the standard Sobolev norm of functions defined on $\partial\Omega$. Note that this norm involves only tangential derivatives. $\|\cdot\|$ stands for the standard $L^2$ norm and $(\cdot,\cdot)$ for the $L^2$ scalar product. The letter $D$ and $d$ are positive numbers which may change from line to line, but independent of $\epsilon\in(0,1]$. $D_m$ stands for a positive constant independent of $\epsilon$ which depends on the $\mathcal{C}^m$-norm of the functions $\psi_k$. $P(\cdot)$ denotes a polynomial function.

In order to obtain the uniform estimates for the solutions of the chemotaxis-Navier-Stokes system with the boundary conditions \eqref{1.2.1} and \eqref{1.2}, we need to find a suitable functional space. Here, we define the functional space $\mathcal{E}_m^\epsilon(T)$ for functions $(n^\epsilon,c^\epsilon,u^\epsilon)(t,x)$ as follows:
\begin{align}\label{1.14}
\mathcal{E}_m^\epsilon(T)=\big{\{}(n^\epsilon,c^\epsilon,u^\epsilon)\in L^\infty([0,T],L^2)\, \big{|}\,\, \mbox{esssup}_{0\leq t\leq T}\|(n^\epsilon,c^\epsilon,u^\epsilon)(t)\|_{\mathcal{E}_m^\epsilon}<+\infty\big{\}},
\end{align}
where the norms $\|(\cdot,\cdot,\cdot)\|_{\mathcal{E}_m^\epsilon}$ is given by
\begin{align}
\|(n^\epsilon,c^\epsilon,u^\epsilon)(t)\|_{\mathcal{E}_m^\epsilon}:=&\|(n^\epsilon,c^\epsilon,u^\epsilon)\|_{\mathcal{H}^m}+\|(\nabla n^\epsilon,\nabla u^\epsilon)(t)\|_{\mathcal{H}^{m-1}}
+\|\nabla c^\epsilon\|_{\mathcal{H}^{m}}\nonumber\\
&+\|(\Delta n^\epsilon,\Delta c^\epsilon)\|_{\mathcal{H}^{m-1}}+\|\nabla u^\epsilon\|_{\mathcal{H}^{1,\infty}}.
\end{align}
Correspondingly, for the initial data $(n_0^\epsilon,c_0^\epsilon,u_0^\epsilon)$, we difine
\begin{align}\label{1.17}
\sup_{0<\epsilon\leq 1}\|(n^\epsilon_0,c^\epsilon_0,u^\epsilon_0)\|_{\mathcal{E}_m^\epsilon}:=\sup_{0<\epsilon\leq 1}&\big{\{}\|(n^\epsilon_0,c^\epsilon_0,u^\epsilon_0)\|_{\mathcal{H}^m}+\|(\nabla n^\epsilon_0,\nabla u^\epsilon_0)\|_{\mathcal{H}^{m-1}}
+\|\nabla c^\epsilon_0\|_{\mathcal{H}^{m}}\nonumber\\
&+\|(\Delta n^\epsilon_0,\Delta c^\epsilon_0)\|_{\mathcal{H}^{m-1}}+\|\nabla u^\epsilon_0\|_{\mathcal{H}^{1,\infty}}\big{\}}\leq \widetilde{D}_0,
\end{align}
where $\widetilde{D}_0$ is a positive constant independent of $\epsilon\in(0,1]$, and the time derivatives of initial data in \eqref{1.17} are defined through the system \eqref{1.1.1}-\eqref{1.1.4}. Thus, the initial data $(n^\epsilon_0,c^\epsilon_0,u^\epsilon_0)$ is assumed to have a higher space regularity and compatibility. We note that the a priori estimates in Theorem \ref{Th3.1} below are obtained in the situation that the approximate solution is sufficiently smooth up to the boundary. Therefore, in order to obtain a self-contained result, we need to assume the approximated initial data satisfies the boundary compatibility condition \eqref{1.2}. For the initial data $(n^\epsilon_0,c^\epsilon_0,u^\epsilon_0)$ satisfying  \eqref{1.17}, it is not clear if there exists an approximate sequence $(n^{\epsilon,\delta}_0,c^{\epsilon,\delta}_0,u^{\epsilon,\delta}_0)$ ($\delta$ being a regularization parameter), which satisfy the boundary compatibilities and $\|(n^{\epsilon,\delta}_0-n^{\epsilon}_0,c^{\epsilon,\delta}_0-c^{\epsilon}_0,u^{\epsilon,\delta}_0-u^{\epsilon}_0)\|_{\mathcal{E}_m^\epsilon}\rightarrow0$ as $\delta\rightarrow0$. Thus, we set
\begin{align}\label{1.26}
\mathcal{E}^{m}_{CNS,ap}:=\big{\{}&(n^\epsilon,c^\epsilon,u^\epsilon)\in \mathcal{C}^{2{m+1}}\times \mathcal{C}^{2{m+1}}\times \mathcal{C}^{2m}\, \big{|}\,\,\nonumber\\
 &\partial_t^kn^\epsilon,\partial_t^kc^\epsilon,\partial_t^ku^\epsilon,\partial_t^k\nabla n^\epsilon,\partial_t^k\nabla c^\epsilon, k=1,...,m  \,\,\text{are defined through} \nonumber\\
 &\text{the system} \,\eqref{1.1.1}-\eqref{1.2}\, \text{and }  \partial_t^ku^\epsilon, \partial_t^k\nabla n^\epsilon,\partial_t^k\nabla c^\epsilon,k=1,...,m-1 \nonumber\\
 &\text{satisfy the boundary compatibility condition}\big{\}}
\end{align}
and
\begin{align}
\mathcal{E}^{m}_{CNS}:= \text{The closure of} \,\,\mathcal{E}^{m,\epsilon}_{CNS,ap} \,\,\text{in the norm} \,\,\|(\cdot,\cdot,\cdot)\|_{\mathcal{E}_m^\epsilon}.
\end{align}

Our first result of this paper reads as follows:

\begin{Theorem}\label{Th1}
 Let $m$ be an integer satisfying $m\geq6$ and $\Omega$ be a $\mathcal{C}^{m+2}$ domain. Assume that the initial data $(n^\epsilon_0,c^\epsilon_0,u^\epsilon_0)\in\mathcal{E}^{m}_{CNS}$ satisfy \eqref{1.17}, $\nabla\cdot u^\epsilon_0=0$ and $\phi$ is a smooth function. Then, there exist $\widetilde{T}_0>0$ and $\widetilde{D}_1$, independent of $\epsilon\in(0,1]$ and $|\zeta|\leq1$, such that there exists a unique solution of the problem \eqref{1.1.1}-\eqref{1.2} on $[0,\widetilde{T}_0]$ that satisfies
\begin{align}\label{1.28}
&\sup_{0\leq\tau\leq t}(\|(n^{\epsilon},c^{\epsilon},u^{\epsilon})\|_{\mathcal{H}^m}^2+\|\nabla(n^{\epsilon},u^{\epsilon})\|_{\mathcal{H}^{m-1}}^2+\|\nabla c^{\epsilon}\|_{\mathcal{H}^{m}}^2\nonumber\\
&+\|\Delta(n^{\epsilon},c^{\epsilon})\|_{\mathcal{H}^{m-1}}^2+\|\nabla u^{\epsilon}\|^2_{1,\infty})
+\epsilon\int_0^t(\|\nabla u^{\epsilon}\|^2_{\mathcal{H}^{m}}+\|\nabla^2 u^{\epsilon}\|^2_{\mathcal{H}^{m-1}})\,d\tau\nonumber\\
&+\int_0^t(\|\nabla n^{\epsilon}\|^2_{\mathcal{H}^{m}}+\|\Delta c^{\epsilon}\|^2_{\mathcal{H}^{m}}
+\|\nabla\Delta(n^{\epsilon},c^{\epsilon})\|^2_{\mathcal{H}^{m-1}})\,d\tau\leq\widetilde{D}_1,\quad \forall\, t\in[0,\widetilde{T}_0],
\end{align}
where $\widetilde{D}_1$ depends only on $\widetilde{D}_0$, $D_{m+2}$ and $\phi$.
 \end{Theorem}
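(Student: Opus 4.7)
The plan is to establish a priori estimates for the full $\mathcal{E}_m^\epsilon$-norm that are uniform in $\epsilon\in(0,1]$ on a time interval $[0,\widetilde{T}_0]$ whose length depends only on $\widetilde{D}_0$, $D_{m+2}$ and $\phi$; combined with a standard fixed-$\epsilon$ local existence theory on the dense subclass $\mathcal{E}^m_{CNS,ap}$ of smooth compatible data and a continuity argument in time, these estimates yield the claim. The overall framework is that of Masmoudi--Rousset~\cite{MR} for the Navier--Stokes system under the Navier boundary condition \eqref{1.12}, adapted to accommodate the transport--chemotaxis couplings in \eqref{1.1.1}--\eqref{1.1.2} and the forcing $-n^\epsilon\nabla\phi$ in \eqref{1.1.3}. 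Introducing $N(t):=\|(n^\epsilon,c^\epsilon,u^\epsilon)(t)\|_{\mathcal{E}_m^\epsilon}^2$, the goal is to close a differential inequality of the form $N(t)\le C(\widetilde{D}_0,D_{m+2},\phi)+t\,P(N(t))$.

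The bulk of the work splits into a tangential (conormal) energy estimate and a normal derivative estimate. For the first, I would apply $\mathcal{Z}^\alpha$ with $|\alpha|\leq m$ to each equation and test against $\mathcal{Z}^\alpha n^\epsilon$, $\mathcal{Z}^\alpha c^\epsilon$ and $\mathcal{Z}^\alpha u^\epsilon$. Because $Z_1,Z_2,Z_3$ are tangent to $\partial\Omega$, the boundary integrals arising from integration by parts either vanish by \eqref{1.2.1} for $n^\epsilon,c^\epsilon$, or, for $u^\epsilon$, reduce via the reformulation \eqref{1.12} to tangential expressions involving the shape operator $\theta$ and the slip coefficient $\zeta$, which are controlled by a trace inequality and $\|u^\epsilon\|_{\mathcal{H}^m}+\|\nabla u^\epsilon\|_{\mathcal{H}^{m-1}}$. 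The commutators $[\mathcal{Z}^\alpha,u^\epsilon\cdot\nabla]$ and $[\mathcal{Z}^\alpha,\Delta]$ together with the chemotactic flux $\nabla\cdot(n^\epsilon\nabla c^\epsilon)$ are then bounded by conormal Moser/product-type inequalities in terms of $P(N(t))$; the explicit presence of $\|\nabla c^\epsilon\|_{\mathcal{H}^m}$ and $\|\nabla u^\epsilon\|_{1,\infty}$ inside the $\mathcal{E}_m^\epsilon$-norm is precisely what closes these quasilinear bounds. For the normal-derivative block, the equations \eqref{1.1.1}--\eqref{1.1.2} are fully parabolic with viscosity $1$, so the bounds on $\|\nabla n^\epsilon\|_{\mathcal{H}^{m-1}}$, $\|\nabla c^\epsilon\|_{\mathcal{H}^m}$, $\|\Delta(n^\epsilon,c^\epsilon)\|_{\mathcal{H}^{m-1}}$ and the dissipative integrals $\int_0^t\|\nabla n^\epsilon\|^2_{\mathcal{H}^m}+\int_0^t\|\nabla\Delta(n^\epsilon,c^\epsilon)\|^2_{\mathcal{H}^{m-1}}$ follow by differentiating in $t$ and in $\mathcal{Z}^\alpha$ and applying Neumann elliptic regularity adapted to the conormal framework. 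For $u^\epsilon$ the viscosity is $\epsilon$, so one cannot recover normal regularity by elliptic means uniformly; instead I would pass to the vorticity $\omega^\epsilon=\nabla\times u^\epsilon$, whose tangential part satisfies a Dirichlet-type condition produced by \eqref{1.12}, and work in the coordinates \eqref{1.18} to extract $\|\nabla u^\epsilon\|_{\mathcal{H}^{m-1}}$ and $\epsilon\int_0^t\|\nabla^2 u^\epsilon\|^2_{\mathcal{H}^{m-1}}$. The new source term $\nabla\times(n^\epsilon\nabla\phi)=\nabla n^\epsilon\times\nabla\phi$ in the vorticity equation is absorbed by the good parabolic control on $\nabla n^\epsilon$.

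\textbf{Main obstacle.} The most delicate ingredient, and the step that drives the restriction $m\geq 6$, is the uniform $L^\infty$ bound $\|\nabla u^\epsilon\|_{1,\infty}$. I would split $\nabla u^\epsilon$ into its tangential components, controlled by conormal Sobolev embedding from $\|u^\epsilon\|_{\mathcal{H}^m}$ together with one normal derivative, and its normal component, estimated by a pointwise analysis of the vorticity equation in the half-space after a lifting that homogenises the boundary data extracted from \eqref{1.12}; the chemotactic contribution $\nabla n^\epsilon\times\nabla\phi$ is controlled in $L^\infty_{t,x}$ via the conormal embedding of $\|\nabla n^\epsilon\|_{\mathcal{H}^{m-1}}$ valid for $m\geq 6$. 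Assembling every piece yields the closed inequality for $N(t)$ stated above, which by a standard continuity argument produces the $\epsilon$-uniform bound on some $[0,\widetilde{T}_0]$. Existence of a smooth approximating solution for fixed $\epsilon$ is then supplied by classical parabolic theory applied to data in $\mathcal{E}^m_{CNS,ap}$, and the passage to general initial data in $\mathcal{E}^m_{CNS}$ satisfying \eqref{1.17} is handled by density in the $\|\cdot\|_{\mathcal{E}_m^\epsilon}$-norm together with stability provided by the uniform estimates.
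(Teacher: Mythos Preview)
Your proposal is correct and matches the paper's approach: conormal energy estimates for $(n,c,u)$, parabolic estimates for $\nabla n,\nabla c,\Delta n,\Delta c$ under the Neumann condition, a Dirichlet-type auxiliary quantity for the normal derivative of $u$ (the paper uses $\eta=\chi\Pi(Su\cdot\nu+\zeta u)$ for the $\mathcal{H}^{m-1}$ estimate and a vorticity-based $\widetilde\eta=\chi\Pi(\omega\times\nu-2(\nabla\nu)^t u+2\zeta u)$ for the $L^\infty$ bound, essentially your suggestion), and a linearized iteration on smooth data in $\mathcal{E}^m_{CNS,ap}$ followed by density. The one ingredient you leave implicit in ``the Masmoudi--Rousset framework'' but which the paper spells out is the pressure splitting $p=p_1+p_2$ into an Euler part and a harmonic Navier--Stokes part, each solving a Neumann problem, together with the observation that $|\Delta u\cdot\nu|_{H^{m-3/2}(\partial\Omega)}$ is controlled purely by tangential norms of $u$; this is what allows the pressure contributions in both the $\mathcal{H}^m$ and the $\eta$-estimates to be absorbed uniformly in $\epsilon$.
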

 \begin{Remark}\label{R1.1}
Since $\Delta c^\epsilon$ appear in the equation \eqref{1.1.1}, we can not do the same higher order estimate for $\nabla n^\epsilon$ as $\nabla c^\epsilon$. Otherwise, we obtain the term $\| \nabla\Delta c^\epsilon\|_{\mathcal{H}^{m}}$ in the right-hand side of the energy inequality, which is out of control.
\end{Remark}
\begin{Remark}
 When the Navier boundary condition \eqref{1.2} is replaced by the following more generalized form
 \begin{equation}\label{GNB1}
 u^\epsilon\cdot \nu=0,\quad \nu\times\omega_u^\epsilon=[Bu^\epsilon]_\tau\quad\text{on}\quad\partial\Omega,
 \end{equation}
where $B=2(A-S(\nu))$ and A is a $(1,1)$-type tensor on the boundary $\partial \Omega$, we can still obtain the same results as those in Theorem \ref{Th1}.
 \end{Remark}
Now we give some comments on the proof of Theorem \ref{Th1}. We shall follow and modify some ideas developed in \cite{MR}. In fact, due to the strong coupling among $n^\epsilon$, $c^\epsilon$ and $u^\epsilon$, we need to overcome some new difficulties and to face more complicated energy estimates. The main step of the proof Theorem \ref{Th1} can be stated as follows:
First, we get a conormal energy estimates in $\mathcal{H}^m$ for $(n^\epsilon,c^\epsilon,u^\epsilon)$.
The second step is to give the estimates for $\| \nabla n^\epsilon\|_{\mathcal{H}^{m-1}}$ and $\| \nabla c^\epsilon\|_{\mathcal{H}^{m}}$. Since the equation \eqref{1.1.1} involve the term $\Delta c^\epsilon$, we can not do the same higher order estimate for $\nabla n^\epsilon$ as $\nabla c^\epsilon$.
In the third step, we show the estimates for $\|\Delta(n^\epsilon,c^\epsilon)\|_{\mathcal{H}^{m-1}}$. Since the dissipative terms $\Delta n^\epsilon$ and $\Delta c^\epsilon$ appear, we can easily deduce the estimates.
Next, we focus on the estimate of $\|\nabla u^\epsilon\|_{\mathcal{H}^{m-1}}$ in the fourth step. In order to obtain this estimate by an energy method, $\partial_\nu u^\epsilon$ is not a convenient quantity, since it does not vanish on the boundary. However, due to the incompressible condition \eqref{1.1.4}, $\|\partial_\nu u^\epsilon\cdot \nu\|_{\mathcal{H}^{m-1}}$ can be easily controlled by the $\mathcal{H}^{m}$ norm of $u^\epsilon$. Moreover, thank to the the Nvier boundary conditions \eqref{1.2}, it is convenient to study $\eta^\epsilon=(Su^\epsilon \nu+\zeta u^\epsilon)_\tau$. We find that $\eta^\epsilon$ satisfy equations with homogeneous Dirichlet boundary conditions and gives us control of $(\partial_\nu u^\epsilon)_\tau$, and by performing energy estimate on the equations solved by $\eta^\epsilon$, we can get a control of $\|\eta^\epsilon\|_{\mathcal{H}^{m-1}}$.
The fifth step is to estimate the pressure. In the spirit of \cite{MR}, we split the pressure into two parts which satisfy nonhomogeneous elliptic equations with Neumann boundary conditions. In view of the regularity theory of elliptic equations with Neumann boundary conditions, we get the estimates of the pressure terms.
Finally, we need to estimate $\|u^\epsilon\|_{\mathcal{H}^{1,\infty}}$, $\|\nabla (c^\epsilon,u^\epsilon)\|_{\mathcal{H}^{1,\infty}}$, $\|(n^\epsilon,c^\epsilon)\|_{W^{2,\infty}}$ and $\|\nabla\Delta c^\epsilon\|_{L^\infty}$. By virtue of the anistropic Sobolev embedding inequality in Lemma \ref{L2.2} and the equations \eqref{1.1.1} and \eqref{1.1.2}, we can give the estimates for them, except for $\|\nabla u^\epsilon\|_{\mathcal{H}^{1,\infty}}$.
In order to estimate $\|\nabla u^\epsilon\|_{\mathcal{H}^{1,\infty}}$, similar to the fourth step, we find equivalent quantities $\widetilde{\eta}^\epsilon$ which satisfies a homogenous Dirichlet condition and solves a convection-diffusion equation. The estimate will be obtained by using Lemma 14 in \cite{MR}.

Based on Theorem \ref{Th1}, we justify the vanishing viscosity limit as follows:
\begin{Theorem}\label{Th2}
 Let $m$ be an integer satisfying $m\geq6$ and $\Omega$ be a $\mathcal{C}^{m+2}$ domain. Consider $(n_0,c_0,u_0)\in\mathcal{E}^{m}_{CNS}$ and $\nabla\cdot u_0=0$ and $(n^{\epsilon},c^{\epsilon},u^{\epsilon})$ the solution of the system \eqref{1.1.1}-\eqref{1.1.4} with the initial data $(n_0,c_0,u_0)$ and the boundary conditions \eqref{1.2.1} and \eqref{1.2} given by Theorem \ref{Th1}. Then, there exists a unique solution of the system \eqref{1.7.1}-\eqref{1.15} with initial value $(n_0,c_0,u_0)$, $(n^{0},c^{0},u^{0})\in\mathcal{E}^{m}_{CNS}$ such that
 \begin{align}
 \sup_{0\leq t\leq\widetilde{T}_0}\big{(}&\|u^\epsilon-u^0\|_{L^2}+\|(n^\epsilon,c^\epsilon)-(n^0,c^0)\|_{H^1}\nonumber\\
 &+\|u^\epsilon-u^0\|_{L^\infty}+\|(n^\epsilon,c^\epsilon)-(n^0,c^0)\|_{W^{1,\infty}}\big{)}\rightarrow0,
 \end{align}
when $\epsilon$ tends to zero.
\end{Theorem}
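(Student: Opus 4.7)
The plan is to take the uniform bounds of Theorem \ref{Th1} as input, extract a convergent subsequence whose limit solves the inviscid system \eqref{1.7.1}-\eqref{1.15}, and then promote the convergence to the $L^\infty$ and $W^{1,\infty}$ topologies claimed in the statement. Theorem \ref{Th1} provides a uniform bound on $\{(n^\epsilon,c^\epsilon,u^\epsilon)\}_{0<\epsilon\leq 1}$ in $\mathcal{E}_m^\epsilon(\widetilde{T}_0)$; using \eqref{1.1.1}-\eqref{1.1.4} to express time derivatives also gives equicontinuity in time in weaker norms. A standard Aubin-Lions compactness argument, combined with the anisotropic Sobolev embedding of Lemma \ref{L2.2}, then yields, along a subsequence, strong convergence of $(n^\epsilon,c^\epsilon,u^\epsilon)$ in $\mathcal{C}([0,\widetilde{T}_0];L^\infty\cap H^{m-2})$. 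This suffices to pass to the limit in every nonlinear term and identify the limit $(n^0,c^0,u^0)$ as a solution of \eqref{1.7.1}-\eqref{1.15} enjoying the same conormal regularity by lower semicontinuity. Uniqueness for the limit system, which I would establish by a standard $L^2$-difference estimate, then forces the entire family to converge.

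To quantify the rate I set $N:=n^\epsilon-n^0$, $C:=c^\epsilon-c^0$, $U:=u^\epsilon-u^0$, $P:=p^\epsilon-p^0$; the differences satisfy
\begin{align*}
N_t+u^\epsilon\cdot\nabla N+U\cdot\nabla n^0&=\Delta N-\nabla\cdot(N\nabla c^\epsilon+n^0\nabla C),\\
C_t+u^\epsilon\cdot\nabla C+U\cdot\nabla c^0&=\Delta C-Cn^\epsilon-c^0N,\\
U_t+u^\epsilon\cdot\nabla U+U\cdot\nabla u^0+\nabla P&=\epsilon\Delta u^\epsilon-N\nabla\phi,\\
\nabla\cdot U&=0,
\end{align*}
with $U\cdot\nu=0$ and $\partial_\nu N=\partial_\nu C=0$ on $\partial\Omega$ and vanishing initial data. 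Multiplying by $(N,C,U)$ and integrating by parts, every quadratic nonlinearity is absorbed by the uniform $L^\infty$ and $W^{1,\infty}$ bounds of Theorem \ref{Th1}. The only delicate contribution is $2\epsilon(\Delta u^\epsilon,U)$, which the divergence theorem rewrites as
\[-2\epsilon\|\nabla u^\epsilon\|^2+2\epsilon(\nabla u^\epsilon,\nabla u^0)+2\epsilon\int_{\partial\Omega}(\Pi\partial_\nu u^\epsilon)\cdot U\,dS.\]
The first term has the good sign; the second is $O(\epsilon)$ since $\|\nabla u^\epsilon\|_{L^\infty_tL^2_x}=O(1)$; and the boundary integrand, rewritten via \eqref{1.12} as $(\theta(u^\epsilon)-2\zeta\Pi u^\epsilon)\cdot U$, is likewise $O(\epsilon)$ by the trace inequality. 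A Gronwall argument applied to $\|(N,C,U)\|_{L^2}^2$ then yields an $O(\sqrt{\epsilon})$ rate. To upgrade $(N,C)$ to $H^1$, I test the $N,C$ equations against $(-\Delta N,-\Delta C)$ and exploit the parabolic dissipation $\int_0^t(\|\nabla N\|^2+\|\Delta C\|^2)\,d\tau$ to absorb the new nonlinearities, using the uniform $W^{2,\infty}$ control of $(n^\epsilon,c^\epsilon)$ from Theorem \ref{Th1}.

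Finally, the $L^\infty$ convergence of $U$ and the $W^{1,\infty}$ convergence of $(N,C)$ follow by interpolating the $L^2$/$H^1$ rates just obtained against the uniform high-order conormal bounds of Theorem \ref{Th1}, via the anisotropic Gagliardo-Nirenberg embedding of Lemma \ref{L2.2}. I expect the main obstacle to be the $U$-equation: being inviscid in the limit, it provides no parabolic smoothing for $U$, so the viscous forcing $\epsilon\Delta u^\epsilon$ must be handled through the good-sign integration by parts above rather than through a direct bound on $\epsilon\|\Delta u^\epsilon\|_{L^2_x}$, which is not uniformly small pointwise in $t$. The weak boundary contribution furnished by the Navier condition \eqref{1.12} is crucial here; with Dirichlet data a boundary layer would appear and obstruct the stated uniform-up-to-the-boundary convergence.
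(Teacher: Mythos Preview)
Your proposal is correct, and your compactness-plus-uniqueness scheme in the opening paragraph is exactly what the paper does: the paper's entire proof of Theorem \ref{Th2} is the single sentence ``We can use the compactness argument that is almost the same as the one needed for the proof of Theorem \ref{Th1},'' i.e.\ repeat the Aubin--Lions extraction of Section \ref{Sec4} with $\epsilon$ playing the role of the regularisation parameter $\delta$, identify the limit, and invoke uniqueness to get convergence of the full family.

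Where you genuinely diverge from the paper is in the second half of your proposal: the paper never writes down the difference system or proves any rate; it stops at qualitative convergence via compactness. Your energy estimate on $(N,C,U)$, with the key observation that $\epsilon(\Delta u^\epsilon,U)$ can be integrated by parts to produce the good-sign term $-\epsilon\|\nabla u^\epsilon\|^2$ plus an $O(\epsilon)$ boundary contribution controlled through the Navier condition \eqref{1.12}, yields an explicit $O(\sqrt{\epsilon})$ rate in $L^2$ (and then in $L^\infty$ by interpolation against the uniform conormal bounds via Lemma \ref{L2.2}). This is strictly more information than the paper supplies, and is the natural quantitative counterpart to the uniform estimate \eqref{1.28}; the trade-off is that the pure compactness route requires essentially no new computation beyond what is already in Section \ref{Sec4}.
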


The rest of the paper is organized as follows. In the following section, we present some inequalities that will be used frequently later.
In Section \ref{Sec3}, we prove a priori energy estimates. Next, we use the a priori estimates in Theorem \ref{Th3.1} to give the proof of Theorem \ref{Th1} in section \ref{Sec4}.
Finally, we prove Theorem \ref{Th2} in Section \ref{Sec5}.


\section{Preliminaries}\label{Sec2}

We first introduce the Korn's inequlity which play an important role in energy estimates below.
\begin{Lemma} [Korn's inequality\cite{FP}]\label{L2.1} Let
$\Omega$ be a bounded Lipschitz domain of $\mathbb{R}^3$. There exists a constant $D>0$ depending only on $\Omega$ such that
\begin{equation}\nonumber
\|u\|_{H^1(\Omega)}\leq D\,(\|u\|_{L^2(\Omega)}+\|S(u)\|_{L^2(\Omega)}),\quad \forall~ u\in(H^1(\Omega))^3.
\end{equation}
\end{Lemma}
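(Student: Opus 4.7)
The plan is to establish the second Korn inequality by combining an algebraic identity for the skew-symmetric part of $\nabla u$ with a compactness-contradiction argument resting on Ne\v{c}as' negative-norm lemma. I would start by decomposing $\nabla u = S(u) + W(u)$, where $W(u) := \frac{1}{2}(\nabla u - (\nabla u)^t)$ is the skew-symmetric part; pointwise orthogonality of symmetric and skew-symmetric matrices gives $|\nabla u|^2 = |S(u)|^2 + |W(u)|^2$, so the claim reduces to the bound $\|W(u)\|_{L^2} \leq D(\|u\|_{L^2} + \|S(u)\|_{L^2})$.

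The key algebraic observation I would use is the identity
\begin{equation*}
\partial_k W_{ij}(u) = \partial_i S_{jk}(u) - \partial_j S_{ik}(u),
\end{equation*}
which is checked by expanding both sides and cancelling the mixed second derivatives of $u$. This shows that $\nabla W(u)$ lies in $H^{-1}(\Omega)$ with norm controlled by $\|S(u)\|_{L^2}$. The decisive analytic ingredient is then Ne\v{c}as' lemma for bounded Lipschitz domains: if $f \in \mathcal{D}'(\Omega)$ satisfies both $f \in H^{-1}(\Omega)$ and $\nabla f \in H^{-1}(\Omega)$, then $f \in L^2(\Omega)$ with $\|f\|_{L^2} \leq C(\|f\|_{H^{-1}} + \|\nabla f\|_{H^{-1}})$.

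I would close the argument by contradiction. If the stated inequality failed, one could produce a sequence $(u_n) \subset H^1(\Omega)^3$ with $\|u_n\|_{H^1}=1$ but $\|u_n\|_{L^2} + \|S(u_n)\|_{L^2} \to 0$. Rellich--Kondrachov compactness together with the $L^2$ decay forces $u_n \to 0$ strongly in $L^2$ along a subsequence. Continuity of $\nabla : L^2(\Omega) \to H^{-1}(\Omega)$ then gives $W(u_n) \to 0$ in $H^{-1}$, while the identity above yields $\|\nabla W(u_n)\|_{H^{-1}} \leq C\|S(u_n)\|_{L^2} \to 0$. Ne\v{c}as' lemma therefore produces $\|W(u_n)\|_{L^2} \to 0$, hence $\|\nabla u_n\|_{L^2} \to 0$, contradicting $\|u_n\|_{H^1}=1$ together with $\|u_n\|_{L^2}\to 0$. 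The main obstacle is the Ne\v{c}as lemma itself, whose validity on a general bounded Lipschitz domain is nontrivial and requires a partition-of-unity localization together with boundary-flattening charts; once this tool is granted, the remainder of the proof is purely algebraic or an application of standard compact embeddings.
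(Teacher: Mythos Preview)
Your argument is correct and is in fact one of the standard proofs of the second Korn inequality: the algebraic identity $\partial_k W_{ij}(u) = \partial_i S_{jk}(u) - \partial_j S_{ik}(u)$ together with Ne\v{c}as' lemma and a Rellich--Kondrachov contradiction argument is exactly the route taken in many textbooks (including, in essence, the reference \cite{FP} cited in the statement).

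Note, however, that the paper does not supply its own proof of this lemma. It is stated as a quoted result with a citation to \cite{FP} and is used as a black box in the subsequent energy estimates (e.g.\ in Lemma~\ref{L3.1} and Lemma~\ref{L3.4}). So there is no ``paper's proof'' to compare against: your write-up goes well beyond what the paper itself provides. The one point worth tightening in your exposition is the appeal to Ne\v{c}as' lemma on a general bounded Lipschitz domain, which you rightly flag as the nontrivial analytic input; once that is taken as known, the rest of your argument is complete.
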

Next, we introduce the space
\begin{align}
\mathcal{W}^m([0,T]\times\Omega)=\{f(t,x)\in L^2([0,T]\times\Omega)| \mathcal{Z}^\alpha f\in L^2([0,T]\times\Omega),|\alpha|\leq m\}.
\end{align}
Then, we have the following Gagliardo-Nirenberg-Moser type inequality whose proof can be found in \cite{GO}.
\begin{Lemma}\label{L2.3}Let $u, v\in L^\infty([0,T]\times\Omega)\cap \mathcal{W}^m([0,T]\times\Omega)$, we have
\begin{align}
\int^t_0\|\mathcal{Z}^{\alpha_1}u\mathcal{Z}^{\alpha_2}v\|^2d\tau\leq D\, (\|u\|_{L^\infty_{t,x}}^2\int^t_0\|v\|_{\mathcal{H}^m}^2d\tau+\|v\|_{L^\infty_{t,x}}^2&\int^t_0\|u\|_{\mathcal{H}^m}^2d\tau),\nonumber\\
 &|\alpha_1|+|\alpha_2|=m.\nonumber
\end{align}
\end{Lemma}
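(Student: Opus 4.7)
The plan is to prove the bound pointwise in $\tau$ by a three-step H\"older~$\to$~interpolation~$\to$~Young argument and then to integrate over $[0,t]$. Set $k_i := |\alpha_i|$, so that $k_1 + k_2 = m$. The two trivial cases $k_1 = 0$ and $k_2 = 0$ are immediate from H\"older: if $k_2 = 0$, then $\|\mathcal{Z}^{\alpha_1} u \cdot v\|_{L^2}^2 \le \|v\|_{L^\infty_x}^2 \|u\|_{\mathcal{H}^m}^2$, so integration in $\tau$ produces the second term on the right-hand side, and the case $k_1 = 0$ is symmetric. Hence the real task is the range $1 \le k_1, k_2 \le m-1$.

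For the main case, I would first apply H\"older in $x$ with conjugate exponents $p_1 = 2m/k_1$ and $p_2 = 2m/k_2$, satisfying $1/p_1 + 1/p_2 = 1/2$, to obtain $\|\mathcal{Z}^{\alpha_1} u\cdot \mathcal{Z}^{\alpha_2} v\|_{L^2_x} \le \|\mathcal{Z}^{\alpha_1} u\|_{L^{p_1}_x} \|\mathcal{Z}^{\alpha_2} v\|_{L^{p_2}_x}$. The heart of the argument is then the conormal Gagliardo--Nirenberg interpolation
\[
\|\mathcal{Z}^\beta w\|_{L^{2m/k}_x} \le D\, \|w\|_{L^\infty_x}^{1-k/m} \|w\|_{\mathcal{H}^m}^{k/m}, \qquad |\beta| = k, \ 1 \le k \le m-1,
\]
which I would establish by fixing a partition of unity subordinate to the covering \eqref{c}, transporting each boundary chart to the half-space $\{z>0\}$ via the map $\Psi$ in \eqref{1.18}, and invoking the classical Gagliardo--Nirenberg--Moser inequality in that flat setting for the smooth vector fields $\partial_{y^1}, \partial_{y^2}, \varphi(z)\partial_z$. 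Plugging this interpolation into H\"older's bound with $k = k_1$ for $u$ and $k = k_2$ for $v$, and using the identity $1 - k_j/m = k_{3-j}/m$, gives
\[
\|\mathcal{Z}^{\alpha_1} u \cdot \mathcal{Z}^{\alpha_2} v\|_{L^2_x}^2 \le D \bigl(\|v\|_{L^\infty_x}^2 \|u\|_{\mathcal{H}^m}^2\bigr)^{k_1/m} \bigl(\|u\|_{L^\infty_x}^2 \|v\|_{\mathcal{H}^m}^2\bigr)^{k_2/m}.
\]
Young's inequality with exponents $m/k_1$ and $m/k_2$ collapses the right-hand side to $D\bigl(\|v\|_{L^\infty_x}^2 \|u\|_{\mathcal{H}^m}^2 + \|u\|_{L^\infty_x}^2 \|v\|_{\mathcal{H}^m}^2\bigr)$, and integration over $[0,t]$ together with the trivial bounds $\|u(\tau)\|_{L^\infty_x} \le \|u\|_{L^\infty_{t,x}}$ and $\|v(\tau)\|_{L^\infty_x} \le \|v\|_{L^\infty_{t,x}}$ yields the claimed inequality.

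The step I expect to be the genuine obstacle is the rigorous justification of the conormal Gagliardo--Nirenberg interpolation. Because $Z_3 = \varphi(z)\partial_z$ vanishes at $z=0$, the space $\mathcal{H}^m$ is a \emph{weighted} Sobolev space and one cannot directly quote the standard interpolation inequalities; instead, one has to verify that iterated compositions $\mathcal{Z}^\beta$ produce, up to smooth bounded coefficients, the weighted derivatives to which the classical Gagliardo--Nirenberg--Moser inequality recalled in \cite{GO} applies chart by chart. Once this localization has been set up, the remainder of the argument is the routine H\"older--Young chain outlined above, carried out uniformly in $\tau$ before the final time integration.
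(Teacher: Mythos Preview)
The paper does not prove this lemma; it simply cites Gu\`es~\cite{GO}. Your overall architecture---H\"older, Gagliardo--Nirenberg interpolation, then Young---is exactly the classical route and is what one finds in~\cite{GO}. However, there is a real gap in the way you implement it.

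Your plan is to run the interpolation \emph{pointwise in}~$\tau$ and only integrate at the end. That works when $\mathcal{Z}^\beta$ is purely spatial, but in this paper $\mathcal{Z}^\alpha=\partial_t^{\alpha_0}Z^{\alpha_1}$ includes time derivatives, and the pointwise interpolation
\[
\|\mathcal{Z}^\beta w(\tau)\|_{L^{2m/k}_x}\le D\,\|w(\tau)\|_{L^\infty_x}^{1-k/m}\|w(\tau)\|_{\mathcal{H}^m}^{k/m}
\]
is \emph{false} in general when $\beta$ has a nontrivial time component. For a concrete failure, take $w(t,x)=g(t)\phi(x)$ with $g(0)=0$, $g'(0)=1$, and $\phi\not\equiv 0$; at $\tau=0$ the right-hand side vanishes (since $\|w(0)\|_{L^\infty_x}=0$) while $\|\partial_t w(0)\|_{L^{2m}_x}=\|\phi\|_{L^{2m}}>0$. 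The point is that at a frozen time the functions $w,\partial_t w,\dots,\partial_t^m w$ are algebraically independent, so no spatial interpolation can trade $\|w\|_{L^\infty_x}$ for control of $\partial_t w$. Your chart-by-chart reduction using only the spatial fields $\partial_{y^1},\partial_{y^2},\varphi(z)\partial_z$ reflects exactly this oversight.

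The fix is minor but essential: carry out all three steps on the full space--time slab $[0,t]\times\Omega$, treating $\partial_t$ as a fourth tangential vector field alongside $Z_1,Z_2,Z_3$. H\"older in $L^2_{t,x}$ gives $\|\mathcal{Z}^{\alpha_1}u\,\mathcal{Z}^{\alpha_2}v\|_{L^2_{t,x}}\le \|\mathcal{Z}^{\alpha_1}u\|_{L^{p_1}_{t,x}}\|\mathcal{Z}^{\alpha_2}v\|_{L^{p_2}_{t,x}}$, and the correct Gagliardo--Nirenberg step is the space--time version
\[
\|\mathcal{Z}^\beta w\|_{L^{2m/k}_{t,x}}\le D\,\|w\|_{L^\infty_{t,x}}^{1-k/m}\Bigl(\int_0^t\|w\|_{\mathcal{H}^m}^2\,d\tau\Bigr)^{k/(2m)},
\]
proved chart by chart on $[0,t]\times\{z>0\}$ for the four fields $\partial_t,\partial_{y^1},\partial_{y^2},\varphi(z)\partial_z$; this is precisely the setting of~\cite{GO}. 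After that, your Young-inequality step goes through unchanged and delivers the stated bound.
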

Finally, we need the following anistropic Sobolev embedding and trace estimates.
\begin{Lemma}[\!\!\cite{MR,WXY}]\label{L2.2} Let $m_1\geq0$ and $m_2\geq0$ be integers, $u\in H^{m_1}_{co}(\Omega)\cap H^{m_2}_{co}(\Omega)$ and $\nabla u\in H^{m_2}_{co}(\Omega)$.
Then we have
\begin{align*}
&\|u\|^2_{{L^\infty(\Omega)}}\leq D\,(\|\nabla u\|_{m_2}+\|u\|_{m_2})\|u\|_{m_1},\quad m_1+m_2\geq 3,\\
&|u|^2_{H^s(\partial\Omega)}\leq D\,(\|\nabla u\|_{m_2}+\|u\|_{m_2})\|u\|_{m_1},\quad m_1+m_2\geq 2s\geq 0.
\end{align*}
\end{Lemma}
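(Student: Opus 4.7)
The plan is to prove both inequalities by the classical localization-and-flattening argument, reducing them to corresponding anisotropic Sobolev inequalities on the model half-space $\mathbb{H} = \mathbb{R}^2_y \times [0,\infty)$, and then establishing those by combining the fundamental theorem of calculus in the normal variable with the two-dimensional Sobolev embedding in the tangential variables.

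First I would fix a smooth partition of unity $(\chi_i)_{i=0}^n$ subordinate to the covering $\Omega \subset \Omega_0 \cup \bigcup_{k=1}^n \Omega_k$ from \eqref{c} and decompose $u = \sum_i \chi_i u$. The interior piece $\chi_0 u$ is compactly supported away from $\partial\Omega$; on its support the conormal norm $\|\cdot\|_m$ is equivalent (up to commutators controlled by derivatives of $\chi_0$) to the standard Sobolev norm $\|\cdot\|_{H^m}$, so for this piece the first bound reduces to the classical embedding $H^k(\Omega_0) \hookrightarrow L^\infty(\Omega_0)$ for any $k > 3/2$, while the trace bound is trivial since $\chi_0 u$ vanishes near $\partial\Omega$. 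For each boundary piece $\chi_k u$ I would pull back via the diffeomorphism $\Psi:(y,z)\mapsto(y,\psi_k(y)+z)$ from \eqref{1.18}, obtaining $\tilde u = (\chi_k u)\circ\Psi$ compactly supported in $\mathbb{H}$. In the new coordinates $Z_1,Z_2$ become the flat tangential fields $\partial_{y^1},\partial_{y^2}$, $Z_3=\varphi(z)\partial_z$ keeps its form, and $\nabla$ is a bounded linear combination of $\partial_{y^1},\partial_{y^2},\partial_z$. Since $\psi_k\in\mathcal{C}^{m+2}$, the Jacobian of $\Psi$ and all relevant derivatives are bounded by $D_{m+2}$, so conormal norms on $\Omega$ and on $\mathbb{H}$ are equivalent modulo lower-order commutator terms that are absorbed into the right-hand side. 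The boundary $\partial\Omega\cap\Omega_k$ becomes $\{z=0\}\subset\mathbb{H}$, and $H^s(\partial\Omega)$ translates (locally) into the tangential Sobolev norm $H^s_y(\mathbb{R}^2)$ of $\tilde u(\cdot,0)$.

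It remains to prove the two inequalities on $\mathbb{H}$. For the $L^\infty$ bound I would apply the identity
\[
|\tilde u(y,z)|^2 = -2\int_z^\infty \tilde u(y,z')\,\partial_z\tilde u(y,z')\,dz',
\]
valid first for Schwartz-class approximants and then by density, together with Cauchy--Schwarz in $z$, to get $\sup_z|\tilde u(y,z)|^2 \le 2\|\tilde u(y,\cdot)\|_{L^2_z}\|\partial_z\tilde u(y,\cdot)\|_{L^2_z}$. Taking the supremum in $y\in\mathbb{R}^2$ and using the two-dimensional embedding $H^{s'}(\mathbb{R}^2)\hookrightarrow L^\infty(\mathbb{R}^2)$ for some $s'>1$, then distributing $s'$ tangential derivatives between the two factors by interpolation, produces a bound of the form $\|\tilde u\|_{m_1}(\|\partial_z\tilde u\|_{m_2}+\|\tilde u\|_{m_2})$ whenever $m_1+m_2\ge 3$; the extra unit of regularity over $2s'>2$ is precisely the one normal derivative delivered by the FTC. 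Rewriting $\|\partial_z\tilde u\|_{m_2}\le C(\|\nabla\tilde u\|_{m_2}+\|\tilde u\|_{m_2})$ yields the stated form. For the trace bound I would apply tangential derivatives $\partial^\beta_y$ with $|\beta|\le s$ to the one-dimensional identity $|\tilde u(y,0)|^2 = -2\int_0^\infty \tilde u\,\partial_z\tilde u\,dz$, integrate in $y$, and use Leibniz and Cauchy--Schwarz in $z$ to obtain
\[
\|\partial^\beta_y\tilde u(\cdot,0)\|_{L^2_y}^2 \le D\sum_{|\beta_1|+|\beta_2|\le 2s}\|\partial^{\beta_1}_y\tilde u\|_{L^2(\mathbb{H})}\,\|\partial^{\beta_2}_y\partial_z\tilde u\|_{L^2(\mathbb{H})},
\]
and then distribute the $2s$ tangential derivatives so that one factor carries at most $m_1$ of them and the other at most $m_2$, which is possible exactly when $m_1+m_2\ge 2s$.

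The main obstacle I expect is the careful bookkeeping of the commutators produced by the flattening and by $Z_3=\varphi(z)\partial_z$ (which is not purely tangential and whose commutators with $\partial_y$ introduce factors involving derivatives of $\psi_k$), coupled with the sharpness of the threshold $m_1+m_2=3$ in the $L^\infty$ bound, which forces one to use the fractional-order 2D embedding $H^{s'}\hookrightarrow L^\infty$ with $s'$ just above $1$ rather than the easier integer-order $H^2\hookrightarrow L^\infty$. All commutators are, however, of strictly lower order with coefficients controlled by $D_{m+2}$, and hence can be absorbed into the right-hand sides of the stated inequalities without affecting the final constant $D$.
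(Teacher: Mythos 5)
The paper never proves Lemma \ref{L2.2}; it is imported verbatim from \cite{MR,WXY}, so the only benchmark is the standard proof in those references, and your sketch follows its architecture exactly: partition of unity, flattening by $\Psi$, reduction to an anisotropic estimate on the half space obtained from the fundamental theorem of calculus in $z$ plus tangential Sobolev embedding in $y$, and the replacement $\|\partial_z\tilde u\|_{m_2}\lesssim\|\nabla u\|_{m_2}+\|u\|_{m_2}$ which is the reason the hypothesis carries $\nabla u\in H^{m_2}_{co}$ (the field $Z_3=\varphi(z)\partial_z$ degenerates at the boundary, so conormal norms alone do not control $\partial_z u$ there).

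The one place where your write-up asserts rather than proves is precisely where the content of the lemma lies: the asymmetric distribution of tangential derivatives. After $\sup_z|\tilde u(y,z)|^2\le 2\|\tilde u(y,\cdot)\|_{L^2_z}\|\partial_z\tilde u(y,\cdot)\|_{L^2_z}$, applying $H^{s'}_y\hookrightarrow L^\infty_y$ to the product $a(y)b(y)$ and then ``distributing by interpolation'' runs into trouble in the off-diagonal cases such as $(m_1,m_2)=(3,0)$: any Leibniz or product estimate on $ab$ either puts $L^\infty_y$ (hence roughly two extra tangential derivatives) on one factor, destroying the threshold, or requires genuine multiplier/paraproduct input. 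The clean device in \cite{MR} is duality in the tangential frequency: bound $\|\tilde u\|_{L^\infty}\le\int_{\mathbb{R}^2}\sup_z|\widehat{u}(\xi,z)|\,d\xi$, use the one-dimensional inequality for each $\xi$, and apply Cauchy--Schwarz against $\langle\xi\rangle^{-(s_1+s_2)}$, integrable exactly when $s_1+s_2>2$; integer choices with $m_1+m_2\ge3$ already suffice, so your claim that sharpness forces a fractional embedding is misplaced --- what it forces is this duality (or an equivalent Littlewood--Paley) argument. Two smaller points on the trace bound: $\partial_y^\beta$ applied to $|\tilde u(y,0)|^2$ is not $|\partial_y^\beta\tilde u(y,0)|^2$, so you should apply the FTC identity to $Z_y^\beta\tilde u$ itself and then integrate by parts in $y$ (or use Plancherel) to realize the split $|\beta_1|\le m_1$, $|\beta_2|\le m_2$ with $|\beta_1|+|\beta_2|\le 2s$; and since the paper invokes the estimate with half-integer $s$ (e.g. $H^{m-|\alpha_0|-\frac12}(\partial\Omega)$ in the proof of Lemma \ref{L3.11}), the non-integer case must be covered by interpolation or by the Fourier formulation, which your integer-multiindex argument does not address.
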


\section{A priori estimates}\label{Sec3}

The main aim of this section is to prove the following a priori estimate which is the crucial step in the proof of Theorem \ref{Th1}. For notational convenience, we drop the superscript $\epsilon$
throughout this section.

\begin{Theorem}\label{Th3.1}
For $m>6$ and a $\mathcal{C}^{m+2}$ domain $\Omega$, there exists a constant $D_{m+2}>0$, independent of $\epsilon\in(0,1]$ and $|\zeta|\leq1$, such that for any sufficiently smooth solution defined on $[0,T]$ of the problem \eqref{1.1.1}-\eqref{1.2} in $\Omega$, we have
\begin{align}\label{3.1}
&\sup_{0\leq\tau\leq t}N_m(\tau)+\epsilon\int_0^t(\|\nabla u\|^2_{\mathcal{H}^{m}}+\|\nabla^2 u\|^2_{\mathcal{H}^{m-1}})\,d\tau+\int_0^t(\|\nabla n\|^2_{\mathcal{H}^{m}}+\|\Delta c\|^2_{\mathcal{H}^{m}}\nonumber\\
&+\|\nabla\Delta(n,c)\|^2_{\mathcal{H}^{m-1}})\,d\tau\leq\,\widetilde{D}_2\,D_{m+2}\,\Big{\{}N_m(0)
+\big{(}1+P(N_m(t))\big{)}\int_0^tP(N_m(\tau))d\tau\Big{\}},
\end{align}
where $\widetilde{D}_2$ depends only on $\phi$ and
\begin{align}\label{N_m}
\!\!\! N_m(t):=\|(n,c,u)\|_{\mathcal{H}^m}^2+\|\nabla(n,u)\|_{\mathcal{H}^{m-1}}^2+\|\nabla c\|_{\mathcal{H}^{m}}^2+\|\Delta(n,c)\|_{\mathcal{H}^{m-1}}^2+\|\nabla u\|^2_{1,\infty}.
\end{align}
\end{Theorem}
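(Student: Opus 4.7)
The overall plan is to build up the norm $N_m$ piece by piece, following the six-step program outlined after Theorem~\ref{Th1}, and then absorb all error terms into $\widetilde{D}_2 D_{m+2}(1+P(N_m(t)))\int_0^t P(N_m(\tau))\,d\tau$ via the Gagliardo--Nirenberg--Moser inequality (Lemma~\ref{L2.3}) and the anisotropic Sobolev embedding (Lemma~\ref{L2.2}). Throughout, the main point is to produce full dissipation $\|\nabla n\|_{\mathcal{H}^m}^2$, $\|\Delta c\|_{\mathcal{H}^m}^2$ and $\epsilon\|\nabla u\|_{\mathcal{H}^m}^2$ on the left, while keeping constants uniform in $\epsilon\in(0,1]$ and $|\zeta|\le 1$. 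I would first apply $\mathcal{Z}^\alpha$ with $|\alpha|\le m$ to \eqref{1.1.1}--\eqref{1.1.4}, pair with $\mathcal{Z}^\alpha n$, $\mathcal{Z}^\alpha c$, $\mathcal{Z}^\alpha u$, and integrate by parts. The commutators $[\mathcal{Z}^\alpha,u\cdot\nabla]$, $[\mathcal{Z}^\alpha,\Delta]$ and the chemotactic $\nabla\cdot(n\nabla c)$ produce terms that can be absorbed by Lemma~\ref{L2.3} at the cost of $P(N_m)$ factors. The Navier boundary condition \eqref{1.12} yields a boundary term $\int_{\partial\Omega}\zeta|\mathcal{Z}^\alpha u_\tau|^2 + \theta(u)\cdot\mathcal{Z}^\alpha u$, which is handled by the trace part of Lemma~\ref{L2.2}; for $n,c$ the Neumann boundary condition \eqref{1.2.1} makes the diffusion boundary term vanish to leading order, modulo $D_{m+2}$ commutator corrections.

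Next, to control $\|\nabla n\|_{\mathcal{H}^{m-1}}$ and $\|\nabla c\|_{\mathcal{H}^m}$, I would use the parabolic nature of \eqref{1.1.1}--\eqref{1.1.2}: apply $\mathcal{Z}^\alpha$ and test against $-\Delta\mathcal{Z}^\alpha n$ (respectively $-\Delta\mathcal{Z}^\alpha c$) to generate $\|\nabla\mathcal{Z}^\alpha n\|^2$ with time-derivative of $\|\nabla n\|_{\mathcal{H}^{m-1}}^2$, picking up a dissipative term $\|\Delta\mathcal{Z}^\alpha n\|^2$ that will supply the left-hand side $\|\nabla n\|_{\mathcal{H}^m}^2$. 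The transport and chemotactic $\nabla\cdot(n\nabla c)$ terms cost $\|\nabla^2 c\|_{L^\infty}$-type $L^\infty$ norms which are subsumed in $P(N_m)$. For $c$ we can afford one higher conormal order because \eqref{1.1.2} has no $\Delta$-type cross-coupling; for $n$ only $\mathcal{H}^{m-1}$ is safe since doing $\mathcal{H}^m$ would generate the uncontrollable $\|\nabla\Delta c\|_{\mathcal{H}^m}$ flagged in Remark~\ref{R1.1}. Step 3, controlling $\|\Delta(n,c)\|_{\mathcal{H}^{m-1}}$, is essentially the same scheme at one derivative higher, again using the Neumann condition and taking advantage of the $\Delta n$, $\Delta c$ dissipation; the output there is the $\|\nabla\Delta(n,c)\|_{\mathcal{H}^{m-1}}^2$ integral on the left.

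Step 4, the velocity-gradient estimate $\|\nabla u\|_{\mathcal{H}^{m-1}}$, is the principal obstacle and the heart of the uniform-in-$\epsilon$ argument. Following Masmoudi--Rousset, I would split $\nabla u$ into its normal and tangential parts: the divergence-free condition \eqref{1.1.4} reduces $\partial_\nu u\cdot\nu$ to tangential derivatives, hence to $\|u\|_{\mathcal{H}^m}$. For the tangential component, introduce $\eta:=(Su\,\nu+\zeta u)_\tau$; by \eqref{1.12} this vanishes on $\partial\Omega$ and is comparable to $(\partial_\nu u)_\tau$ in $\Omega$. A direct computation shows that $\eta$ satisfies a convection-diffusion equation
\begin{equation*}
\partial_t\eta + u\cdot\nabla\eta - \epsilon\Delta\eta = F(u,\nabla u,\nabla^2\phi,\zeta) - (\Pi\nabla p)_{\text{part}},
\end{equation*}
with homogeneous Dirichlet condition and right-hand side controllable by $P(N_m)$ once the pressure is understood. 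Testing $\mathcal{Z}^\alpha\eta$ against itself with $|\alpha|\le m-1$ yields $\|\eta\|_{\mathcal{H}^{m-1}}^2$ plus the $\epsilon\|\nabla\eta\|_{\mathcal{H}^{m-1}}^2$ dissipation, and equivalence gives the $(\partial_\nu u)_\tau$ control. For the pressure (Step~5), take the divergence of \eqref{1.1.3} to get $\Delta p = -\nabla\cdot(u\cdot\nabla u) - \nabla\cdot(n\nabla\phi)$ with Neumann data from the normal trace of the momentum equation; split $p = p^1+p^2$ so that each part solves an elliptic Neumann problem with either bulk or boundary source, and estimate via standard regularity, gaining $D_{m+2}$ constants.

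Finally, for the $L^\infty$-type norms inside $N_m$ (the components $\|\nabla u\|_{1,\infty}^2$ and the implicit control of $\|u\|_{W^{1,\infty}}$, $\|\nabla(c,u)\|_{W^{1,\infty}}$, etc. appearing through Lemma~\ref{L2.3}), the anisotropic Sobolev embedding of Lemma~\ref{L2.2} reduces most of them to the $\mathcal{H}^m$ conormal quantities already controlled, with parameters $m_1,m_2$ chosen so that $m_1+m_2\le m$ using $m\ge 6$. The genuinely hard piece is $\|\nabla u\|_{\mathcal{H}^{1,\infty}}$, which cannot be obtained by embedding because we never have a full $\nabla^2 u$ bound uniform in $\epsilon$. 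For this I would again pass to a suitable equivalent quantity $\widetilde\eta$ solving a convection-diffusion equation with homogeneous Dirichlet condition, and apply the maximum-principle-type Lemma~14 of \cite{MR} which produces an $L^\infty$ estimate depending on $L^\infty$ norms of the source, themselves bounded via Lemma~\ref{L2.2} by $P(N_m)$. Combining all six steps and applying Grönwall via the polynomial $P$ produces \eqref{3.1}; the main delicacies will be (i) keeping the boundary contributions under $D_{m+2}$-control whenever a conormal derivative hits the curved boundary, and (ii) arranging the chemotactic nonlinearity $\nabla\cdot(n\nabla c)$ so that its worst term $n\,\nabla\Delta c$ is always paired against a dissipative quantity in $n$ rather than in $c$, consistent with Remark~\ref{R1.1}.
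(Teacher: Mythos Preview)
Your proposal is correct and follows essentially the same six-step program as the paper's own proof (Lemmas~\ref{L3.1}--\ref{L3.13}), including the introduction of $\eta=(Su\,\nu+\zeta u)_\tau$ for the normal-derivative estimate, the Euler/Navier--Stokes pressure split $p=p_1+p_2$, and the use of Lemma~14 of \cite{MR} applied to an equivalent quantity $\widetilde\eta$ for the $\|\nabla u\|_{\mathcal{H}^{1,\infty}}$ bound. One small correction: no Gr\"onwall is actually applied in the proof of Theorem~\ref{Th3.1} itself---the output \eqref{3.1} is precisely the raw a~priori inequality (with $P(N_m)$ on the right), and the Gr\"onwall/continuation argument is deferred to the proof of Theorem~\ref{Th1} in Section~\ref{Sec4}.
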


Since the proof of Theorem \ref{Th3.1} is quite complicated and lengthy, we divided the proof into the following subsections.
\subsection{Conormal Energy Estimates} In this subsection, we first give the basic $L^2$ energy estimates.
\begin{Lemma}\label{L3.1} For a smooth solution of the problem \eqref{1.1.1}-\eqref{1.2}, we have
\begin{align}\label{3.3.1}
\sup_{0\leq\tau\leq t}\|u\|^2+\epsilon\int_0^t\| \nabla u\|^2d\tau\leq\,D_2\,\Big{(}\|u_0\|^2+\int_0^t(\|u\|^2+\|n\nabla\phi\|^2)\,d\tau\Big{)}
\end{align}
for every $\epsilon\in(0,1]$ and $|\zeta|\leq 1$.
\end{Lemma}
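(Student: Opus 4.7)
The plan is to perform a standard $L^2$ energy estimate by testing the momentum equation \eqref{1.1.3} against $u$, using the Navier boundary condition \eqref{1.2} together with Korn's inequality (Lemma \ref{L2.1}) to convert the viscous dissipation into a coercive estimate on $\|\nabla u\|^2$.

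First, I would take the $L^2$ inner product of \eqref{1.1.3} with $u$. The transport term $(u\cdot\nabla u,u)$ vanishes because $\nabla\cdot u=0$ in $\Omega$ and $u\cdot\nu=0$ on $\partial\Omega$ (integration by parts). The pressure term $(\nabla p,u)$ vanishes for the same reason. For the viscous term, the key observation is to rewrite $\Delta u=2\,\mathrm{div}\,Su$ (using $\nabla\cdot u=0$) so that integration by parts yields
\begin{equation*}
\epsilon(\Delta u,u)=-2\epsilon\|Su\|^2+2\epsilon\int_{\partial\Omega}(Su\cdot\nu)\cdot u\,dS.
\end{equation*}
Since $u=u_\tau$ on $\partial\Omega$, the boundary integrand equals $(Su\cdot\nu)_\tau\cdot u_\tau=-\zeta|u_\tau|^2$ by the second Navier condition in \eqref{1.2}. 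Together with the external forcing, this gives the identity
\begin{equation*}
\tfrac{1}{2}\tfrac{d}{dt}\|u\|^2+2\epsilon\|Su\|^2+2\epsilon\zeta\int_{\partial\Omega}|u_\tau|^2\,dS=-(n\nabla\phi,u).
\end{equation*}

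Next, I would control the boundary term when $\zeta<0$. Using the trace-type inequality (e.g., the second part of Lemma \ref{L2.2} with $s=1/2$, or a direct interpolation), we have $|u|^2_{L^2(\partial\Omega)}\le D(\|u\|\,\|\nabla u\|+\|u\|^2)$. Combined with $|\zeta|\le 1$ and Young's inequality, for any $\eta>0$,
\begin{equation*}
2\epsilon|\zeta|\,|u_\tau|^2_{L^2(\partial\Omega)}\le \eta\,\epsilon\|\nabla u\|^2+D_\eta\,\epsilon\|u\|^2\le \eta\,\epsilon\|\nabla u\|^2+D_\eta\,\|u\|^2,
\end{equation*}
so choosing $\eta$ sufficiently small absorbs this term into the dissipation. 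The forcing term is controlled by Cauchy--Schwarz and Young: $|(n\nabla\phi,u)|\le \tfrac{1}{2}\|u\|^2+\tfrac{1}{2}\|n\nabla\phi\|^2$. Finally, Korn's inequality (Lemma \ref{L2.1}) gives $\|\nabla u\|^2\le D(\|u\|^2+\|Su\|^2)$, i.e., $\|Su\|^2\ge c\|\nabla u\|^2-\|u\|^2$, which converts the $\|Su\|^2$ term into a genuine coercive $\epsilon\|\nabla u\|^2$ term at the cost of an extra $\epsilon\|u\|^2\le\|u\|^2$ on the right-hand side.

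Collecting these contributions yields a differential inequality of the form
\begin{equation*}
\tfrac{d}{dt}\|u\|^2+c\,\epsilon\|\nabla u\|^2\le D\bigl(\|u\|^2+\|n\nabla\phi\|^2\bigr),
\end{equation*}
and integrating in time on $[0,t]$ then taking the supremum gives \eqref{3.3.1}. The only subtle point, and the main obstacle, is handling the boundary contribution coming from $\zeta$ when it is negative: one must be careful to absorb it using the trace inequality and the $\epsilon$-factor rather than Grönwall's inequality alone, since an exponential factor $e^{D t/\epsilon}$ would destroy the uniformity in $\epsilon$. The uniformity $|\zeta|\le 1$ and the smallness of $\epsilon\le 1$ are exactly what allow the absorption to proceed cleanly.
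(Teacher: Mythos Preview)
Your proposal is correct and follows essentially the same approach as the paper: test \eqref{1.1.3} against $u$, use $\Delta u=2\,\nabla\cdot Su$ and the Navier condition to obtain $-2\epsilon\|Su\|^2-2\epsilon\zeta|u_\tau|^2_{L^2(\partial\Omega)}$, then apply Korn's inequality. Your treatment is in fact more explicit than the paper's, which jumps directly from the energy identity and Lemma~\ref{L2.1} to \eqref{3.3.1} without spelling out the trace-inequality absorption of the boundary term for negative $\zeta$.
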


\begin{proof}
Multiplying \eqref{1.1.3} by $u$, we obtain
\begin{align}
( u_t,u)+( u\cdot\nabla u,u)=\epsilon(\Delta u,u)-(\nabla p,u)-(n\nabla\phi,u).\label{2.2}
\end{align}
Due to \eqref{1.1.4}, \eqref{1.2} and integration by parts, we find
\begin{align}
(\nabla p,u)&=\int_{\partial\Omega}p u\cdot \nu \,d\sigma-\int_{\Omega}p\nabla\cdot u \,dx=0,\label{3.5}\\
(\epsilon\Delta u,u)=2\epsilon(\nabla\cdot Su,u)&=-2\epsilon\|Su\|^2+2\epsilon\ \int_{\partial \Omega}((Su)\cdot \nu)\cdot u\,d\sigma \nonumber\\
&=-2\epsilon\| Su\|^2-2\epsilon\zeta\int_{\partial\Omega}|u_\tau|^2\,d\sigma.\label{3.6}
\end{align}
From \eqref{2.2}-\eqref{3.6} and Lemma \ref{L2.1}, we can obtain \eqref{3.3.1}.
\end{proof}

Next, we give the basic $L^2$ energy estimate for $(n,c)$.
\begin{Lemma}\label{L3.2} For a smooth solution of the problem \eqref{1.1.1}-\eqref{1.2}, we have
\begin{align}\label{3.3.1.1}
\sup_{0\leq\tau\leq t}\|n\|^2+\int_0^t\| \nabla n\|^2d\tau\leq\,D_2\,\Big{(}\|n_0\|^2+\int_0^t\|n\nabla c\|^2\,d\tau\Big{)}
\end{align}
for every $\epsilon\in(0,1]$ and $|\zeta|\leq 1$.
\end{Lemma}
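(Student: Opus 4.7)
The plan is to mimic the proof of Lemma \ref{L3.1}: take the $L^2$ inner product of the cell equation \eqref{1.1.1} with $n$ and integrate the three right-hand-side contributions by parts, exploiting the boundary conditions \eqref{1.2.1} and \eqref{1.2} to kill the boundary terms. Concretely, I would first check that the transport term vanishes: since $\nabla\cdot u=0$ in $\Omega$ and $u\cdot\nu=0$ on $\partial\Omega$, an integration by parts gives
\begin{equation}
(u\cdot\nabla n,n)=\tfrac12\int_{\partial\Omega}n^{2}\,u\cdot\nu\,d\sigma-\tfrac12(n^{2},\nabla\cdot u)=0.\nonumber
\end{equation}
Next, the diffusion term contributes $(\Delta n,n)=-\|\nabla n\|^{2}+\int_{\partial\Omega}n\,\partial_\nu n\,d\sigma=-\|\nabla n\|^{2}$ by the Neumann condition $\partial_\nu n=0$.

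For the chemotaxis term, I would integrate by parts once more, using $\partial_\nu c=0$ on $\partial\Omega$, to obtain
\begin{equation}
-(\nabla\cdot(n\nabla c),n)=(n\nabla c,\nabla n)-\int_{\partial\Omega} n^{2}\,\partial_\nu c\,d\sigma=(n\nabla c,\nabla n).\nonumber
\end{equation}
Young's inequality then yields $(n\nabla c,\nabla n)\le\tfrac12\|\nabla n\|^{2}+\tfrac12\|n\nabla c\|^{2}$, and the first piece can be absorbed into the dissipation $\|\nabla n\|^{2}$ coming from the diffusion. Combining these identities gives
\begin{equation}
\tfrac12\tfrac{d}{dt}\|n\|^{2}+\tfrac12\|\nabla n\|^{2}\le\tfrac12\|n\nabla c\|^{2},\nonumber
\end{equation}
and integrating in time produces \eqref{3.3.1.1}. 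There is essentially no obstacle here: no Korn inequality is needed (unlike in Lemma \ref{L3.1}) because $n$ is a scalar and its Dirichlet form already controls $\|\nabla n\|^{2}$ directly; the whole argument rests only on bookkeeping of the three boundary integrals, each of which vanishes because of one of the prescribed boundary conditions $u\cdot\nu=\partial_\nu n=\partial_\nu c=0$.
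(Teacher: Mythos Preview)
Your proposal is correct and follows essentially the same approach as the paper: multiply \eqref{1.1.1} by $n$, integrate by parts using $\nabla\cdot u=0$, $u\cdot\nu=0$, and the Neumann conditions \eqref{1.2.1} to eliminate all boundary terms, then apply Young's inequality to $(n\nabla c,\nabla n)$. Your write-up is in fact more explicit than the paper's about why each boundary integral vanishes.
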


\begin{proof}
Multiplying \eqref{1.1.1} by $n$, we obtain
\begin{align}
( n_t,n)+( u\cdot\nabla n,n)=(\Delta n,n)-(\nabla\cdot(n\nabla c),n).\label{3.8}
\end{align}
Due to \eqref{1.1.4}-\eqref{1.2} and integration by parts, we find
\begin{align}
( u\cdot\nabla n,n)&=0,\quad(\Delta n,n)=-\|\nabla n\|^2,\quad(\nabla\cdot(n\nabla c),n)=-(n\nabla c,\nabla n).\label{3.9.1}
\end{align}
Based on \eqref{3.9.1} and Young's inequality, we can obtain \eqref{3.3.1.1}.
\end{proof}
Similar to Lemma \ref{L3.2}, we can easily get the following Lemma, the proof being omitted.
\begin{Lemma}\label{L3.3} For a smooth solution of the problem \eqref{1.1.1}-\eqref{1.2}, we have
\begin{align}\label{3.3.1.2}
\sup_{0\leq\tau\leq t}\|c\|^2+\int_0^t\| \nabla c\|^2d\tau\leq\,D_2\,\Big{(}\|c_0\|^2+\int_0^t\|\sqrt{n}c\|^2\,d\tau\Big{)}
\end{align}
for every $\epsilon\in(0,1]$ and $|\zeta|\leq 1$.
\end{Lemma}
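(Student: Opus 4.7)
The plan is to mirror the proof of Lemma \ref{L3.2} exactly, using the equation (1.1.2) for $c$ in place of (1.1.1). First I would take the $L^2$ scalar product of (1.1.2) with $c$, obtaining
\begin{equation*}
(c_t,c) + (u\cdot\nabla c,c) = (\Delta c,c) - (cn,c).
\end{equation*}
The left-most term is $\tfrac{1}{2}\tfrac{d}{dt}\|c\|^2$, and the right-most term is exactly $\|\sqrt{n}\,c\|^2$ with the convention that this denotes $\int_\Omega nc^2\,dx$ (treating the reaction contribution as a forcing whose sign is not used).

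Next I would dispose of the transport and diffusion terms using the boundary conditions, just as in the proof of Lemma \ref{L3.1}. The divergence-free condition (1.1.4) combined with the impermeability $u\cdot\nu=0$ from (1.2) gives $(u\cdot\nabla c,c)=0$ after integration by parts, since
\begin{equation*}
(u\cdot\nabla c,c) = \tfrac{1}{2}\int_{\partial\Omega}(u\cdot\nu)c^2\,d\sigma - \tfrac{1}{2}\int_\Omega (\nabla\cdot u)c^2\,dx = 0.
\end{equation*}
The homogeneous Neumann condition $\partial_\nu c=0$ from (1.2.1) kills the boundary contribution in $(\Delta c,c)$, producing $(\Delta c,c)=-\|\nabla c\|^2$. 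Assembling these pieces yields the energy identity
\begin{equation*}
\tfrac{1}{2}\tfrac{d}{dt}\|c\|^2 + \|\nabla c\|^2 + \|\sqrt{n}\,c\|^2 = 0.
\end{equation*}

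Integrating in $\tau$ from $0$ to $t$ and transferring the (possibly indefinite) $\|\sqrt{n}\,c\|^2$ term to the right-hand side, one obtains the stated inequality with $D_2=2$. I expect no real obstacle here: in contrast to Lemma \ref{L3.2}, no Young's inequality is needed since the reaction term $-cn$ involves no derivative of $c$, so there is nothing to absorb into $\|\nabla c\|^2$. The only minor subtlety is the boundary calculus on the curved part of $\partial\Omega$, but this is handled exactly as in Lemmas \ref{L3.1}--\ref{L3.2} via the local coordinates (1.18), with $\epsilon$ and $\zeta$ playing no role because (1.1.2) contains no viscosity parameter and no slip-friction correction.
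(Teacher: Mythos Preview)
Your proposal is correct and follows exactly the approach the paper intends: the paper omits the proof of Lemma~\ref{L3.3} entirely, stating only that it is obtained ``similar to Lemma~\ref{L3.2}'', which is precisely the structure you reproduce. Your observation that Young's inequality is unnecessary here (since the reaction term $-nc$ contains no derivative of $c$) is also accurate and makes the argument even cleaner than Lemma~\ref{L3.2}.
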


Now, we turn to the higher order energy estimates of $(n,c,u)$. Set
\begin{align}
M(t):=\sup_{0\leq\tau\leq t}\big{\{}\|u\|^2_{\mathcal{H}^{1,\infty}}+\|\nabla (c,u)\|^2_{\mathcal{H}^{1,\infty}}+\|(n,c)\|^2_{W^{2,\infty}}+\|\nabla\Delta c\|_{L^\infty}^2\big{\}}.
\end{align}
\begin{Lemma}\label{L3.4} For every $m\geq 0$, a smooth solution of the problem \eqref{1.1.1}-\eqref{1.2} satisfies the estimate
\begin{align}\label{3.8.1}
&\sup_{0\leq\tau\leq t}\|u\|^2_{\mathcal{H}^m}+\epsilon\int_0^t\|\nabla u\|^2_{\mathcal{H}^{m}}d\tau\nonumber\\
\leq&\,D_{m+2}\,\Big{\{}\|u_0\|^2_{\mathcal{H}^m}+\int_0^t\big{(}\|\nabla^2p_1\|_{\mathcal{H}^{m-1}}\|u\|_{\mathcal{H}^{m}}
+\epsilon^{-1}(\|\nabla p_2\|_{\mathcal{H}^{m-1}}^2+\|p_2\|_{\mathcal{H}^{m-1}}^2)\big{)}d\tau\nonumber\\
&+\big{(}1+P(M(t))\big{)}\int_0^t\big{(}\|u\|_{\mathcal{H}^m}^2+\|\nabla u\|_{\mathcal{H}^{m-1}}^2\big{)}d\tau+\int_0^t\|n\nabla\phi\|_{\mathcal{H}^{m}}^2d\tau\Big{\}},
\end{align}
where the pressure $p:=p_1+p_2$. Here, $p_1$ is the``Euler" part of the pressure which solves
\begin{equation}\label{3.8.11}
\left\{\begin{array}{l}
\Delta p_1=-\nabla\cdot( u\cdot\nabla u)-\nabla\cdot(n\nabla\phi)\quad \text{in}\quad \Omega,\\
\partial_\nu p_1=-(u\cdot\nabla u)\cdot \nu-n\nabla\phi\cdot\nu\quad \text{on}\quad \partial\Omega
\end{array}
\right.
\end{equation}
and $p^\epsilon_2$ is the ``Navier-Stokes" part of the pressure which solves
\begin{equation}\label{3.9}
\left\{\begin{array}{l}
\Delta p_2=0\quad \text{in}\quad \Omega,\\
\partial_\nu p_2=\epsilon\Delta u\cdot \nu\quad \text{on}\quad \partial\Omega.
\end{array}
\right.
\end{equation}
Note that here we use the convention that $\|\cdot\|_{\mathcal{H}^{m}}=0$ for $m<0$.
\end{Lemma}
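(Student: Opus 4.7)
\textbf{Proof proposal for Lemma \ref{L3.4}.}

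The plan is to produce the $\mathcal{H}^m$ estimate by applying $\mathcal{Z}^\alpha$ to the momentum equation \eqref{1.1.3} for each multi-index $|\alpha|\le m$, testing against $\mathcal{Z}^\alpha u$, summing over $\alpha$, and treating the four resulting terms (time-derivative, convection, pressure, viscosity, forcing) in turn. The pressure is split as $p=p_1+p_2$ exactly as in \eqref{3.8.11}--\eqref{3.9}; this decomposition is forced by taking $\nabla\cdot$ of \eqref{1.1.3} (using $\nabla\cdot u=0$) and by dotting \eqref{1.1.3} with $\nu$ on $\partial\Omega$ (using $u\cdot\nu=0$, which is preserved in time so that $u_t\cdot\nu=0$). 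The time derivative term yields $\tfrac12\tfrac{d}{dt}\|\mathcal{Z}^\alpha u\|^2$; the forcing contributes $\int_0^t\|n\nabla\phi\|_{\mathcal{H}^m}^2\,d\tau$ after Cauchy--Schwarz; these are already accounted for on the right-hand side of \eqref{3.8.1}.

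For the convection term I would write $\mathcal{Z}^\alpha(u\cdot\nabla u)=u\cdot\nabla\mathcal{Z}^\alpha u+[\mathcal{Z}^\alpha,u\cdot\nabla]u$. The principal piece integrates essentially by parts against $\mathcal{Z}^\alpha u$ using $\nabla\cdot u=0$ and $u\cdot\nu=0$, modulo controllable boundary commutators (since $\mathcal{Z}^\alpha u\cdot\nu$ is not identically zero but equals $-\sum u\cdot(\mathcal{Z}^\alpha\nu)+\cdots$ after using $u\cdot\nu=0$, handled by trace estimates in Lemma \ref{L2.2}). The commutator is controlled by the Gagliardo--Nirenberg--Moser inequality of Lemma \ref{L2.3}, giving a bound of the form $P(M(t))(\|u\|_{\mathcal{H}^m}^2+\|\nabla u\|_{\mathcal{H}^{m-1}}^2)$, which matches the middle term on the right of \eqref{3.8.1}. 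The pressure term decomposes as $(\mathcal{Z}^\alpha\nabla p_1,\mathcal{Z}^\alpha u)+(\mathcal{Z}^\alpha\nabla p_2,\mathcal{Z}^\alpha u)$. For $p_1$, writing $\mathcal{Z}^\alpha\nabla p_1=\nabla\mathcal{Z}^\alpha p_1+[\mathcal{Z}^\alpha,\nabla]p_1$ and integrating by parts produces boundary terms that vanish or reduce via $u\cdot\nu=0$; the remaining bulk term is bounded by $\|\nabla^2 p_1\|_{\mathcal{H}^{m-1}}\|u\|_{\mathcal{H}^m}$ by Cauchy--Schwarz and a conormal Poincaré inequality applied to the commutator. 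For $p_2$ (which is harmonic and of size $O(\epsilon)$), I would not integrate by parts but simply apply Young's inequality with weight $\epsilon^{-1}$: this produces $\epsilon^{-1}(\|\nabla p_2\|_{\mathcal{H}^{m-1}}^2+\|p_2\|_{\mathcal{H}^{m-1}}^2)$ on the right, with the ``loss'' absorbed by the $\epsilon\|\nabla u\|_{\mathcal{H}^m}^2$ dissipation we are producing.

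The viscosity term is the heart of the proof. Using $\Delta u=2\nabla\cdot Su$, I write $\epsilon(\mathcal{Z}^\alpha\Delta u,\mathcal{Z}^\alpha u)=2\epsilon(\nabla\cdot(\mathcal{Z}^\alpha Su),\mathcal{Z}^\alpha u)+2\epsilon([\mathcal{Z}^\alpha,\nabla\cdot]Su,\mathcal{Z}^\alpha u)$. Integrating the first piece by parts yields $-2\epsilon\|\mathcal{Z}^\alpha Su\|^2$ (the desired dissipation, modulo Korn's inequality from Lemma \ref{L2.1}) plus a boundary term $2\epsilon\int_{\partial\Omega}(\mathcal{Z}^\alpha Su\cdot\nu)\cdot\mathcal{Z}^\alpha u\,d\sigma$. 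On $\partial\Omega$ I decompose $\mathcal{Z}^\alpha u$ into its normal and tangential parts; the normal part pairs with $(Su\nu)\cdot\nu$ which is handled using $\nabla\cdot u=0$ to trade normal for tangential derivatives, while the tangential part combines with the Navier condition \eqref{1.12} to yield $-2\epsilon\zeta\int_{\partial\Omega}|\Pi\mathcal{Z}^\alpha u|^2\,d\sigma+(\text{lower order})$, the first of which has a favourable sign (or is absorbed by $|\zeta|\le1$ and trace estimates). The commutator $[\mathcal{Z}^\alpha,\nabla\cdot]Su$ is a differential operator of order at most $m$ with smooth coefficients (depending on $\psi_k$, hence on $D_{m+2}$) applied to $Su$; Cauchy--Schwarz plus Moser-type estimates again absorb it into $\epsilon\|\nabla u\|_{\mathcal{H}^m}^2$ or into $P(M(t))\|\nabla u\|_{\mathcal{H}^{m-1}}^2$.

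I expect the main obstacle to be the treatment of the boundary integrals in the viscosity term and the pressure term, since the conormal fields $Z_i$ do not commute with normal differentiation and the vectorial structure of the Navier condition $(Su\cdot\nu)_\tau=-\zeta u_\tau$ mixes tangential and normal components of $\mathcal{Z}^\alpha u$. Careful bookkeeping using $\Pi$, $\nu$, the shape operator $\theta$, and the trace estimates of Lemma \ref{L2.2} is required to show that every boundary contribution either carries a good sign, is $O(\epsilon)$ and absorbable, or reduces to a $p_2$-type term that has already been placed on the right-hand side of \eqref{3.8.1}. Once these boundary terms are controlled, summing over $|\alpha|\le m$ and integrating in time, followed by Korn's inequality to convert $\epsilon\|Su\|_{\mathcal{H}^m}^2$ back into $\epsilon\|\nabla u\|_{\mathcal{H}^m}^2$, yields \eqref{3.8.1}.
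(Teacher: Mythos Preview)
Your overall plan matches the paper's proof closely: apply $\mathcal{Z}^\alpha$ to \eqref{1.1.3}, test against $\mathcal{Z}^\alpha u$, handle the convection commutator via Lemma~\ref{L2.3}, write $\epsilon\Delta u=2\epsilon\nabla\cdot Su$ and integrate by parts, invoke Korn, and control the resulting boundary integral using the Navier condition together with the observations (recorded in the paper as \eqref{3.16}--\eqref{3.20}) that both $\Pi\partial_\nu u$ and $\partial_\nu u\cdot\nu$ are controlled by tangential norms of $u$ on $\partial\Omega$.

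There is, however, a genuine gap in your handling of $p_2$. You say you would \emph{not} integrate by parts and instead apply Young with weight $\epsilon^{-1}$, landing on $\epsilon^{-1}\|\nabla p_2\|_{\mathcal{H}^{m-1}}^2$. But a direct Cauchy--Schwarz on $(\mathcal{Z}^\alpha\nabla p_2,\mathcal{Z}^\alpha u)$ with $|\alpha|=m$ only gives $\|\nabla p_2\|_{\mathcal{H}^m}\|u\|_{\mathcal{H}^m}$, which is one derivative too many on $p_2$; the lemma's right-hand side (and the later pressure estimate, Lemma~\ref{L3.11}) only supplies $\|\nabla p_2\|_{\mathcal{H}^{m-1}}$. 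The paper recovers this missing order precisely by integrating by parts: it moves $\nabla$ onto $\mathcal{Z}^\alpha u$, producing a bulk term of size $\|\nabla p_2\|_{\mathcal{H}^{m-1}}\|\nabla\mathcal{Z}^\alpha u\|$ plus a boundary term $\int_{\partial\Omega}\mathcal{Z}^\alpha p_2\,(\mathcal{Z}^\alpha u\cdot\nu)\,d\sigma$, and the latter is reduced to $\mathcal{H}^{m-1}$ by a further integration by parts \emph{along} $\partial\Omega$ exploiting $u\cdot\nu=0$ (see \eqref{3.27}--\eqref{3.29}). Only after this step does Young with weight $\epsilon^{-1}$ give the stated right-hand side, with the $\epsilon\|\nabla\mathcal{Z}^\alpha u\|^2$ absorbed by the dissipation---exactly the ``loss'' you mention, but which does not arise without the integration by parts. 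In short, your stated outcome for $p_2$ is correct but your stated mechanism cannot produce it; the integration by parts you reserve for $p_1$ is in fact what is needed for $p_2$. Conversely, the paper treats $p_1$ by a direct bound $\|\mathcal{Z}^\alpha\nabla p_1\|\leq D_{m+2}\|\nabla^2 p_1\|_{\mathcal{H}^{m-1}}$ (using that $\alpha\neq\alpha_0$ forces a spatial $Z_j$, and $Z_j\nabla p_1$ is a combination of second derivatives), without the integration by parts you propose.
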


\begin{proof}
The estimate for $m=0$ has been given in Lemma \ref{L3.1}. Now we assume that Lemma \ref{L3.2} holds for $|\alpha|\leq m-1$ and prove that it is still ture for $|\alpha|=m$. We apply $\mathcal{Z}^\alpha$ to \eqref{1.1.3} for $|\alpha|=m$ to obtain
\begin{align}
&\mathcal{Z}^\alpha u_t+ u\cdot\nabla \mathcal{Z}^\alpha u+\mathcal{Z}^\alpha\nabla p=\epsilon\mathcal{Z}^\alpha\Delta u-\mathcal{Z}^\alpha(n\nabla\phi)+\mathcal{C}_1,\label{3.11}
\end{align}
where
\begin{align*}
\mathcal{C}_1:=-[\mathcal{Z}^\alpha,u\cdot\nabla]u=-\sum_{|\beta|\geq1,\beta+\gamma=\alpha}D_{\beta,\gamma}\mathcal{Z}^\beta( u)\cdot\mathcal{Z}^\gamma\nabla u- u\cdot[\mathcal{Z}^\alpha,\nabla]u.
\end{align*}
Multiplying \eqref{3.11} by $\mathcal{Z}^\alpha u$ and integrating by parts, we have
\begin{align}\label{3.12}
 \frac{1}{2}\frac{d}{dt}\|\mathcal{Z}^\alpha u\|^2+(\mathcal{Z}^\alpha \nabla p,\mathcal{Z}^\alpha u)
=\epsilon(\mathcal{Z}^\alpha\Delta u,\mathcal{Z}^\alpha u)-(\mathcal{Z}^\alpha(n\nabla\phi),\mathcal{Z}^\alpha u)+(\mathcal{C}_1,\mathcal{Z}^\alpha u).
\end{align}

First, we estimate the first term in the right-hand side of \eqref{3.12}. We obtain
\begin{align}\label{3.13}
\epsilon\int_\Omega \mathcal{Z}^\alpha\Delta u\cdot \mathcal{Z}^\alpha u \,dx=2\epsilon\int_\Omega(\nabla\cdot \mathcal{Z}^\alpha Su)\cdot \mathcal{Z}^\alpha u\,dx
+2\epsilon\int_\Omega([\mathcal{Z}^\alpha,\nabla\cdot]Su)\cdot\mathcal{Z}^\alpha u\,dx.
\end{align}
Now, by integrating by parts, we get from the first term in the right-hand side of \eqref{3.13} that
\begin{align}\label{3.14}
\epsilon\int_\Omega(\nabla\cdot \mathcal{Z}^\alpha Su)\cdot \mathcal{Z}^\alpha u\,dx=&-\epsilon\int_\Omega \mathcal{Z}^\alpha Su\cdot\nabla \mathcal{Z}^\alpha u\,dx
+\epsilon\int_{\partial\Omega}((\mathcal{Z}^\alpha Su)\cdot \nu)\cdot \mathcal{Z}^\alpha u\,d\sigma\nonumber\\
=&-\epsilon\|S(\mathcal{Z}^\alpha u)\|^2-\epsilon\int_\Omega[\mathcal{Z}^\alpha,S]u\cdot \nabla \mathcal{Z}^\alpha u\,dx\nonumber\\
&+\epsilon\int_{\partial\Omega}((\mathcal{Z}^\alpha Su)\cdot \nu)\cdot \mathcal{Z}^\alpha u\,d\sigma.
\end{align}
Thanks to Lemma \ref{L2.1}, there exists a $d_0>0$ such that
\begin{align}\label{3.15}
\epsilon\int_\Omega(\nabla\cdot \mathcal{Z}^\alpha Su)\cdot& \mathcal{Z}^\alpha u\,dx
\leq-d_0\epsilon\|\nabla(\mathcal{Z}^\alpha u)\|^2+D_{m+2}\|u\|^2_{\mathcal{H}^m}\nonumber\\
&+D_{m+2}\epsilon\|\nabla \mathcal{Z}^\alpha u\|\|\nabla u\|_{\mathcal{H}^{m-1}}+\epsilon\int_{\partial\Omega}((\mathcal{Z}^\alpha Su)\cdot \nu)\cdot \mathcal{Z}^\alpha u\,d\sigma.
\end{align}
It remains to estimate the boundary term of \eqref{3.15}. Before we treat the boundary term, we have the following observations. Due to the Navier boundary condition \eqref{1.12}, we get
\begin{align}\label{3.16}
|\Pi\partial_\nu\partial_t^{\alpha_0}u|_{H^m(\partial\Omega)}\leq|\theta(\partial_t^{\alpha_0}u)|_{H^m(\partial\Omega)}+2\zeta|\Pi \partial_t^{\alpha_0}u|_{H^m(\partial\Omega)}\leq D\,|\partial_t^{\alpha_0}u|_{H^m(\partial\Omega)}.
\end{align}
To estimate the normal part of $\partial_\nu u$, we can use the divergence free condition to write
\begin{equation}\label{3.17}
\nabla\cdot \partial_t^{\alpha_0}u=\partial_\nu\partial_t^{\alpha_0}u\cdot \nu+(\Pi\partial_{y_1}\partial_t^{\alpha_0}u)^1+(\Pi\partial_{y_2}\partial_t^{\alpha_0}u)^2.
\end{equation}
Hence, we easily get
\begin{equation}\label{3.18}
|\partial_\nu\partial_t^{\alpha_0}u\cdot \nu|_{H^{m-1}(\partial\Omega)}\leq D\,|\partial_t^{\alpha_0}u|_{H^{m}(\partial\Omega)}.
\end{equation}
From \eqref{3.16} and \eqref{3.18}, we have
\begin{equation}\label{3.19}
|\nabla \partial_t^{\alpha_0}u|_{H^{m-1}(\partial\Omega)}\leq D\,|\partial_t^{\alpha_0}u|_{H^{m}(\partial\Omega)}.
\end{equation}
Thanks to $u\cdot n=0$ on the boundary, we immediately obtain that
\begin{equation}\label{3.20}
|(\mathcal{Z}^\alpha u)\cdot n|_{H^1(\partial\Omega)}\leq D\,|\partial_t^{\alpha_0}u|_{H^{m-|\alpha_0|}(\partial\Omega)},\quad|\alpha|=m.
\end{equation}
Now we return to deal with the boundary term of \eqref{3.15} as follows
\begin{align*}
\epsilon\int_{\partial\Omega}((\mathcal{Z}^\alpha Su)\cdot \nu)\cdot \mathcal{Z}^\alpha u\,d\sigma=&\epsilon\int_{\partial\Omega}\mathcal{Z}^\alpha(\Pi(Su\cdot \nu))\cdot\Pi \mathcal{Z}^\alpha u\,d\sigma\nonumber\\
&+\epsilon\int_{\partial\Omega}\mathcal{Z}^\alpha(\partial_\nu u\cdot \nu)\mathcal{Z}^\alpha u\cdot \nu\,d\sigma+\mathcal{C}_b,
\end{align*}
where
\begin{align*}
\mathcal{C}_b:=&\,\epsilon\int_{\partial\Omega}[\mathcal{Z}^\alpha,\Pi](Su\cdot \nu)\cdot\Pi \mathcal{Z}^\alpha u\,d\sigma+\epsilon\int_{\partial\Omega}[\mathcal{Z}^\alpha,\nu](Su^\epsilon\cdot \nu)\mathcal{Z}^\alpha u\cdot \nu\,d\sigma\\
&+\epsilon\sum_{\beta+\gamma=\alpha,\gamma\neq0}D_{\beta,\gamma}\int_{\partial\Omega}(\mathcal{Z}^\beta Su\cdot\mathcal{Z}^\gamma \nu)\cdot\mathcal{Z}^\alpha u\,d\sigma.
\end{align*}
Thanks to the Navier boundary condition \eqref{1.2}, we can easily get
\begin{align}
&\Big{|}\epsilon\int_{\partial\Omega}\mathcal{Z}^\alpha(\Pi(Su\cdot \nu))\cdot\Pi \mathcal{Z}^\alpha u\,d\sigma\Big{|}\leq \epsilon \, D_{m+2}\,|\partial_t^{\alpha_0}u|_{H^{m-|\alpha_0|}(\partial\Omega)}^2.\label{3.23}
\end{align}
We also note that $\mathcal{Z}^\alpha u\cdot n$ and $\mathcal{C}_b$ with $\alpha=\alpha_0$ vanish, so we assume $\alpha\neq\alpha_0$.
From \eqref{3.19}, we obtain that
\begin{align}
|C_b|&\leq \epsilon\,D_{m+2}\,|\nabla\partial_t^{\alpha_0} u|_{H^{m-1-|\alpha_0|}(\partial\Omega)}|\partial_t^{\alpha_0}u|_{H^{m-|\alpha_0|}(\partial\Omega)}\nonumber\\
&\leq \epsilon \, D_{m+2}\,|\partial_t^{\alpha_0}u|_{H^{m-|\alpha_0|}(\partial\Omega)}^2.\label{3.22}
\end{align}
By integrating by parts along the boundary, we have that
\begin{align}\label{3.24}
\Big{|}\epsilon\int_{\partial\Omega}\mathcal{Z}^\alpha(\partial_\nu u\cdot \nu)\mathcal{Z}^\alpha u\cdot \nu\,d\sigma\Big{|}\leq&\,\epsilon\, D\,|\partial_\nu\partial_t^{\alpha_0}u\cdot \nu|_{H^{m-1-|\alpha_0|}(\partial\Omega)}|\mathcal{Z}^\alpha u\cdot \nu|_{H^1(\partial\Omega)}\nonumber\\
\leq&\,\epsilon\, D\,|\partial_t^{\alpha_0}u|^2_{H^{m-|\alpha_0|}(\partial\Omega)}.
\end{align}
Hence, we get from \eqref{3.15} and \eqref{3.23}-\eqref{3.24} that
\begin{align}\label{3.25}
\epsilon\int_\Omega(\nabla\cdot \mathcal{Z}^\alpha Su)\cdot \mathcal{Z}^\alpha u\,d\sigma
\leq\, D_{m+2}\,&(\|u\|^2_{\mathcal{H}^m}+\epsilon\|\nabla \mathcal{Z}^\alpha u\|\|\nabla u\|_{\mathcal{H}^{m-1}}\nonumber\\
&+\epsilon|\partial_t^{\alpha_0}u|^2_{H^{m-|\alpha_0|}(\partial\Omega)})-d_0\epsilon\|\nabla(\mathcal{Z}^\alpha u)\|^2.
\end{align}
Next, we deal with the second term of the right-hand side of \eqref{3.13}, i.e.\linebreak
$\epsilon\int_\Omega([\mathcal{Z}^\alpha,\nabla\cdot]Su)\cdot \mathcal{Z}^\alpha u\,dx$. We can expand it as a sum of terms under the form
\begin{equation}
\epsilon\int_\Omega\beta_k\partial_k(\mathcal{Z}^{\tilde{\alpha}}Su)\cdot \mathcal{Z}^\alpha u\,dx,\quad |\tilde{\alpha}|\leq m-1.\nonumber
\end{equation}
By using integrations by parts and \eqref{3.19}, we have
\begin{align}\label{3.27.1}
\epsilon\Big{|}\int_\Omega\beta_k\partial_k(\mathcal{Z}^{\tilde{\alpha}}Su)\cdot \mathcal{Z}^\alpha u\,dx\Big{|}\leq\, D_{m+2}\,\epsilon&\big{(}\|\nabla u\|_{\mathcal{H}^m}\|\nabla u\|_{\mathcal{H}^{m-1}}
+\|u\|^2_{\mathcal{H}^m}\nonumber\\
&+|\partial_t^{\alpha_0}u|_{H^{m-|\alpha_0|}(\partial\Omega)}^2\big{)}.
\end{align}
Consequently, from \eqref{3.25} and \eqref{3.27.1}, we get
\begin{align}\label{3.28}
\epsilon\Big{|}\int_\Omega \mathcal{Z}^\alpha \Delta u\cdot \mathcal{Z}^\alpha u\,dx\Big{|}\leq\, D_{m+2}\,&\big{\{}\|u\|^2_{\mathcal{H}^{m}}+\epsilon\|\nabla u\|_{\mathcal{H}^{m}}\|\nabla u\|_{\mathcal{H}^{m-1}}\nonumber\\
&+\epsilon|\partial_t^{\alpha_0}u|^2_{H^{m-|\alpha_0|}(\partial\Omega)}\big{\}}-d_0\epsilon\|\nabla(\mathcal{Z}^\alpha u)\|^2.
\end{align}

Second, we estimate the term involving the pressure $p$ in \eqref{3.12}. We note that $(\mathcal{Z}^\alpha \nabla p,\mathcal{Z}^\alpha u)$ with $\alpha=\alpha_0$ vanishes, so we deal with the case of $\alpha\neq\alpha_0$.
\begin{align}\label{3.26}
\Big{|}\int_{\Omega}\mathcal{Z}^\alpha \nabla p\cdot \mathcal{Z}^\alpha u\,dx\Big{|}\leq&\, \|\nabla^2p_1\|_{\mathcal{H}^{m-1}}\|u\|_{\mathcal{H}^{m}}+\Big{|}\int_{\Omega} \mathcal{Z}^\alpha \nabla p_2\cdot  \mathcal{Z}^\alpha u\,dx\Big{|}.
\end{align}
Now, we focus on the last term in \eqref{3.26}. By integrating by parts, we obtain that
\begin{align}\label{3.27}
\Big{|}\int_{\Omega}\mathcal{Z}^\alpha \nabla p_2\cdot\mathcal{Z}^\alpha u\,dx\Big{|}\leq D_{m+2}\,&\Big{(}\|\nabla p_2\|_{\mathcal{H}^{m-1}}\|u\|_{\mathcal{H}^{m}}+\|\nabla p_2\|_{\mathcal{H}^{m-1}}\|\nabla\mathcal{Z}^\alpha u\|\nonumber\\
&+\Big{|}\int_{\partial\Omega}\mathcal{Z}^\alpha  p_2\mathcal{Z}^\alpha u\cdot \nu\,d\sigma\Big{|}\Big{)}.
\end{align}
By integrating by parts along the boundary and Lemma \ref{L2.2}, we get
\begin{align}\label{3.29}
&\Big{|}\int_{\partial\Omega}\mathcal{Z}^\alpha  p_2\mathcal{Z}^\alpha u\cdot \nu\,d\sigma\Big{|}\nonumber\\
\leq&\, D_{m+2}\,|\mathcal{Z}^{\widetilde{\alpha}}p_2|_{L^2(\partial\Omega)}|\mathcal{Z}^\alpha u\cdot \nu|_{H^1(\partial\Omega)}\nonumber\\
\leq&\,D_{m+2}\,(\|\nabla p_2\|_{\mathcal{H}^{m-1}}+\|p_2\|_{\mathcal{H}^{m-1}})(\|\nabla u\|_{\mathcal{H}^{m}}+\|u\|_{\mathcal{H}^{m}}),
\end{align}
where $|\widetilde{\alpha}|=m-1$. From \eqref{3.26}-\eqref{3.29}, we get
\begin{align}\label{3.33}
\Big{|}\int_{\Omega}\mathcal{Z}^\alpha \nabla p\cdot \mathcal{Z}^\alpha u\,dx\Big{|}\leq&\,D_{m+2}\,\|\nabla^2p_1\|_{\mathcal{H}^{m-1}}\|u\|_{\mathcal{H}^{m}}+D_{m+2}\,(\|\nabla p_2\|_{\mathcal{H}^{m-1}}\nonumber\\
&+\|p_2\|_{\mathcal{H}^{m-1}})(\|\nabla u\|_{\mathcal{H}^{m}}+\|u\|_{\mathcal{H}^{m}}).
\end{align}

Finally, we estimate the commutator term. By using Lemma \ref{L2.3}, we have
\begin{align}
\int_0^t\|\mathcal{C}_1\|^2d\tau\leq&\,\sum_{|\beta|\geq1,\beta+\gamma=\alpha}D_{\beta,\gamma}\int_0^t\|\mathcal{Z}^\beta( u)\cdot\mathcal{Z}^\gamma\nabla u\|^2\,d\tau+\int_0^t\|u\cdot[\mathcal{Z}^\alpha,\nabla]u\|^2\,d\tau\nonumber\\
\leq& \,D\Big{\{}\|\mathcal{Z}u\|^2_{L^\infty_{x,t}}\int_0^t\|\nabla u\|^2_{\mathcal{H}^{m-1}}d\tau+\|\nabla u\|^2_{L^\infty_{x,t}}\int_0^t\| \mathcal{Z}u\|^2_{\mathcal{H}^{m-1}}d\tau\nonumber\\
&\quad+\| u\|^2_{L^\infty_{x,t}}\int_0^t\| \nabla u\|^2_{\mathcal{H}^{m-1}}d\tau\Big{\}}\nonumber\\
\leq& \,D_{m+1}M(t)\int_0^t(\|\nabla u\|^2_{\mathcal{H}^{m-1}}+\|u\|^2_{\mathcal{H}^{m}})d\tau,\label{3.31}
\end{align}

Consequently, from \eqref{3.28}, \eqref{3.33}-\eqref{3.31}, Lemma \ref{L2.2}, Young's inequality and the assumptions with respect to $|\alpha|\leq m-1$, we get \eqref{3.8.1}.
This ends the proof of Lemma \ref{L3.2}.
\end{proof}
Next, we give the higher order estimate of $(n,c)$.
\begin{Lemma}\label{L3.5} For every $m\geq 0$, a smooth solution of the problem \eqref{1.1.1}-\eqref{1.2} satisfies the estimate
\begin{align}\label{3.8.1.1}
&\sup_{0\leq\tau\leq t}(\|(n,c)\|^2_{\mathcal{H}^m})+d\int_0^t\|\nabla(n,c)\|^2_{\mathcal{H}^{m}}d\tau\nonumber\\
\leq&\,D_{m+2}\,\Big{\{}\|(n_0,c_0\|^2_{\mathcal{H}^m}+\big{(}1+P(M(t))\big{)}\int_0^t\big{(}\|(n,c,u)\|_{\mathcal{H}^m}^2+\|\nabla c \|_{\mathcal{H}^{m}}^2\nonumber\\
&\quad\quad\quad+\|\nabla n \|_{\mathcal{H}^{m-1}}^2+\|\nabla^2 c \|_{\mathcal{H}^{m-1}}^2\big{)}d\tau+\delta\int_0^t\|\nabla^2n\|_{\mathcal{H}^{m-1}}^2d\tau\Big{\}},
\end{align}
where $\delta>0$ is a small enough constant.
\end{Lemma}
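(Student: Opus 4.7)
The plan is to treat $n$ and $c$ simultaneously by applying $\mathcal{Z}^\alpha$ for $|\alpha|\le m$ to \eqref{1.1.1} and \eqref{1.1.2}, pairing with $\mathcal{Z}^\alpha n$ and $\mathcal{Z}^\alpha c$ respectively in $L^2$, and summing. The $L^2$ base cases follow from Lemmas \ref{L3.2} and \ref{L3.3}, so I proceed by induction on $|\alpha|$ up to $m$.

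For the $c$-equation the commuted system reads
\begin{equation*}
\partial_t \mathcal{Z}^\alpha c + u\cdot\nabla \mathcal{Z}^\alpha c = \Delta \mathcal{Z}^\alpha c - \mathcal{Z}^\alpha(cn) + [\mathcal{Z}^\alpha,\Delta]c - [\mathcal{Z}^\alpha, u\cdot\nabla]c.
\end{equation*}
Multiplying by $\mathcal{Z}^\alpha c$ and integrating, the transport term vanishes up to a harmless $\|\nabla u\|_\infty \|\mathcal{Z}^\alpha c\|^2$ piece, while integration by parts on the Laplacian produces $-\|\nabla\mathcal{Z}^\alpha c\|^2$ plus a boundary contribution. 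The Neumann condition $\partial_\nu c=0$ makes the boundary integral lower order: writing $\partial_\nu \mathcal{Z}^\alpha c=[\partial_\nu,\mathcal{Z}^\alpha]c$ on $\partial\Omega$ and using the trace estimate in Lemma \ref{L2.2}, this term is absorbed into $D_{m+2}\|\nabla c\|_{\mathcal{H}^m}\|c\|_{\mathcal{H}^m}$ (one $\varepsilon_0$-Young absorption into $d\|\nabla\mathcal{Z}^\alpha c\|^2$). The commutator $[\mathcal{Z}^\alpha,\Delta]c$ is a sum of terms of the form $\beta_k\partial_k\mathcal{Z}^{\tilde\alpha}\nabla c$ with $|\tilde\alpha|\le m-1$; after one more integration by parts it is controlled, using Lemma \ref{L2.3}, by $\|\nabla c\|_{\mathcal{H}^m}\|\nabla^2 c\|_{\mathcal{H}^{m-1}}$ which is absorbed into $d\|\nabla\mathcal{Z}^\alpha c\|^2$ plus $\|\nabla^2 c\|_{\mathcal{H}^{m-1}}^2$ on the right. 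The reaction $\mathcal{Z}^\alpha(cn)$ is handled by Lemma \ref{L2.3}: $\int_0^t\|\mathcal{Z}^\alpha(cn)\|^2\,d\tau\le P(M(t))\int_0^t(\|c\|_{\mathcal{H}^m}^2+\|n\|_{\mathcal{H}^m}^2)\,d\tau$, and the transport commutator is estimated identically.

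For the $n$-equation the same scheme applies, but the chemotaxis divergence is the delicate part. I rewrite
\begin{equation*}
\int_0^t\bigl(\mathcal{Z}^\alpha\nabla\!\cdot\!(n\nabla c),\mathcal{Z}^\alpha n\bigr)\,d\tau
=-\int_0^t\bigl(\mathcal{Z}^\alpha(n\nabla c),\nabla\mathcal{Z}^\alpha n\bigr)\,d\tau+\mathcal{B},
\end{equation*}
where the boundary piece $\mathcal{B}$ is treated as above using $\partial_\nu n=0$ (noting $n\nabla c\cdot\nu = n\partial_\nu c=0$ on $\partial\Omega$, so the leading boundary term actually vanishes). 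Splitting $\mathcal{Z}^\alpha(n\nabla c)=n\,\mathcal{Z}^\alpha\nabla c+[\mathcal{Z}^\alpha,n]\nabla c$, the principal piece is bounded by Cauchy-Schwarz and Young:
\begin{equation*}
\|n\,\mathcal{Z}^\alpha\nabla c\|\,\|\nabla\mathcal{Z}^\alpha n\|\le \tfrac{d}{2}\|\nabla\mathcal{Z}^\alpha n\|^2+D\,\|n\|_\infty^2\|\nabla c\|_{\mathcal{H}^m}^2,
\end{equation*}
producing the $\|\nabla c\|_{\mathcal{H}^m}^2$ contribution on the right. The commutator $[\mathcal{Z}^\alpha,n]\nabla c$ is time-integrated via Lemma \ref{L2.3} into $P(M(t))\,(\|n\|_{\mathcal{H}^m}^2+\|\nabla c\|_{\mathcal{H}^{m}}^2+\|\nabla^2 c\|_{\mathcal{H}^{m-1}}^2)$.

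The main obstacle, and the source of the small parameter $\delta$, is the commutator $[\mathcal{Z}^\alpha,\Delta]n$. After integration by parts and reduction to terms $\beta_k\partial_k\mathcal{Z}^{\tilde\alpha}\nabla n$ with $|\tilde\alpha|\le m-1$, the natural bound is $D_{m+2}\|\nabla n\|_{\mathcal{H}^m}\|\nabla^2 n\|_{\mathcal{H}^{m-1}}$; Young's inequality with weight $\delta$ then produces $\tfrac{d}{2}\|\nabla\mathcal{Z}^\alpha n\|^2+\delta\|\nabla^2 n\|_{\mathcal{H}^{m-1}}^2+\delta^{-1}D\|\nabla n\|_{\mathcal{H}^{m-1}}^2$, which precisely matches the $\delta$-term in the statement (the $\delta^{-1}$ is swallowed into $D_{m+2}$ since $\delta$ is fixed small). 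Combining the $n$- and $c$-estimates, integrating in time, absorbing the coercive terms on the left, and invoking the induction hypothesis for $|\alpha|\le m-1$ yields \eqref{3.8.1.1}.
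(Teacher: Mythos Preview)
Your proposal is correct and follows essentially the same route as the paper: induction on $|\alpha|$, apply $\mathcal{Z}^\alpha$ to \eqref{1.1.1}--\eqref{1.1.2}, pair with $\mathcal{Z}^\alpha n$ and $\mathcal{Z}^\alpha c$, extract the coercive gradient via integration by parts, and control the Laplacian/divergence commutators and boundary terms using the Neumann conditions, Lemma~\ref{L2.3}, and the trace estimate in Lemma~\ref{L2.2}. Two minor bookkeeping points (neither fatal): the chemotaxis boundary term $\int_{\partial\Omega}\nu\cdot\mathcal{Z}^\alpha(n\nabla c)\,\mathcal{Z}^\alpha n$ does not vanish outright---only the leading piece $\mathcal{Z}^\alpha(\nu\cdot n\nabla c)$ does, while the commutator $[\mathcal{Z}^\alpha,\nu]\cdot(n\nabla c)$ survives and is estimated by trace as in the paper's \eqref{3.42}; and your Young split for $[\mathcal{Z}^\alpha,\Delta]n$ should read as pairing $\|\nabla^2 n\|_{\mathcal{H}^{m-1}}$ against $\|n\|_{\mathcal{H}^m}$ (or, after an integration by parts, $\|\nabla n\|_{\mathcal{H}^{m-1}}$ against $\|\nabla \mathcal{Z}^\alpha n\|$ plus a boundary term carrying $\|\nabla^2 n\|_{\mathcal{H}^{m-1}}$), which is what actually yields $\delta\|\nabla^2 n\|_{\mathcal{H}^{m-1}}^2 + D_\delta(\|n\|_{\mathcal{H}^m}^2+\|\nabla n\|_{\mathcal{H}^{m-1}}^2)$ as in \eqref{3.40}.
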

\begin{proof}
The estimate for $m=0$ has been given in Lemmas \ref{L3.2} and \ref{L3.3}. Now we assume that Lemma \ref{L3.5} holds for $|\alpha|\leq m-1$ and prove that it is still true for $|\alpha|=m$. We apply $\mathcal{Z}^\alpha$ to \eqref{1.1.1} for $|\alpha|=m$ to obtain that
\begin{align}
&\mathcal{Z}^\alpha n_t+ u\cdot\nabla \mathcal{Z}^\alpha n=\mathcal{Z}^\alpha\Delta n-\mathcal{Z}^\alpha(\nabla\cdot (n\nabla c))+\mathcal{C}_2,\label{3.37}
\end{align}
where
\begin{align*}
\mathcal{C}_2:=-[\mathcal{Z}^\alpha,u\cdot\nabla]n=-\sum_{|\beta|\geq1,\beta+\gamma=\alpha}D_{\beta,\gamma}\mathcal{Z}^\beta( u)\cdot\mathcal{Z}^\gamma\nabla n- u\cdot[\mathcal{Z}^\alpha,\nabla]n.
\end{align*}
Multiplying \eqref{3.37} by $\mathcal{Z}^\alpha n$ and integrating by parts, we have
\begin{align}\label{3.38}
 \frac{1}{2}\frac{d}{dt}\|\mathcal{Z}^\alpha n\|^2
=(\mathcal{Z}^\alpha\Delta n,\mathcal{Z}^\alpha n)-(\mathcal{Z}^\alpha(\nabla\cdot (n\nabla c)),\mathcal{Z}^\alpha n)+(\mathcal{C}_2,\mathcal{Z}^\alpha n).
\end{align}

First, we estimate the first term in the right-hand side of \eqref{3.38}. By integrating by parts, we get
\begin{align}\label{3.39}
\int_\Omega \mathcal{Z}^\alpha\Delta n \mathcal{Z}^\alpha n\,dx=&\int_\Omega\nabla\cdot(\mathcal{Z}^\alpha\nabla n)\mathcal{Z}^\alpha n\,dx
+\int_\Omega[\mathcal{Z}^\alpha,\nabla\cdot]\nabla n\mathcal{Z}^\alpha n\,dx\nonumber\\
=&-\int_\Omega\mathcal{Z}^\alpha\nabla n\nabla\mathcal{Z}^\alpha n\,dx+\int_\Omega[\mathcal{Z}^\alpha,\nabla\cdot]\nabla n\mathcal{Z}^\alpha n\,dx\nonumber\\
&+\int_{\partial\Omega}\nu\cdot\mathcal{Z}^\alpha\nabla n\mathcal{Z}^\alpha n\,d\sigma.
\end{align}
Thanks to Lemma \ref{L2.2}, there exists a constant $d>0$ such that
\begin{align}\label{3.40}
\int_\Omega \mathcal{Z}^\alpha\Delta n \mathcal{Z}^\alpha n\,dx\leq-d\|\nabla n\|^2_{\mathcal{H}^m}+D_\delta D_{m+2}(\| n\|^2_{\mathcal{H}^m}+\|\nabla n\|^2_{\mathcal{H}^{m-1}})+\delta\|\nabla^2 n\|^2_{\mathcal{H}^{m-1}}.
\end{align}

Next, we deal with the second term in the right-hand side of \eqref{3.38}. By integrating by parts, we get
\begin{align}\label{3.41}
-\int_\Omega\mathcal{Z}^\alpha(\nabla\cdot(n\nabla c))\mathcal{Z}^\alpha n\,dx=&\int_\Omega\mathcal{Z}^\alpha(n\nabla c)\nabla\mathcal{Z}^\alpha n\,dx-\int_\Omega[\mathcal{Z}^\alpha,\nabla\cdot](n\nabla c)\mathcal{Z}^\alpha n\,dx\nonumber\\
&-\int_{\partial\Omega}\nu\cdot\mathcal{Z}^\alpha(n\nabla c)\mathcal{Z}^\alpha n\,d\sigma.
\end{align}
For the last term of \eqref{3.41}, by using Lemmas \ref{L2.3} and \ref{L2.2}, and \eqref{1.2.1}, we obtain that
\begin{align}\label{3.42}
\Big{|}\int_0^t\int_{\partial\Omega}\nu\cdot\mathcal{Z}^\alpha(n\nabla c)\mathcal{Z}^\alpha n\,d\sigma d\tau\Big{|}\leq& \,D_\delta D_{m+2}(1+M(t))\int^t_0\big{\{}\| n\|^2_{\mathcal{H}^m}+\|\nabla(n,c)\|^2_{\mathcal{H}^{m-1}}\nonumber\\
&+\|\nabla^2 c\|^2_{\mathcal{H}^{m-1}}\big{\}}\,d\tau+\delta\int^t_0\|\nabla n\|^2_{\mathcal{H}^{m}}\,d\tau.
\end{align}
For the commutator term in \eqref{3.41},  we can expand it as a sum of terms under the form
\begin{align}
\int_\Omega\beta_k\partial_k\mathcal{Z}^{\tilde{\alpha}}(n\nabla c) \mathcal{Z}^\alpha n\,dx,\quad |\tilde{\alpha}|\leq m-1.\nonumber
\end{align}
By using Lemma \ref{L2.3}, we easily get that
\begin{align}\label{3.43}
\Big{|}\int^t_0\int_\Omega[\mathcal{Z}^\alpha,\nabla\cdot](n\nabla c)\mathcal{Z}^\alpha n\,dxd\tau\Big{|}\leq\,D_{m+1} M(t)\int^t_0&(\| n\|^2_{\mathcal{H}^m}+\|\nabla(n,c)\|^2_{\mathcal{H}^{m-1}}\nonumber\\
&+\|\nabla^2 c\|^2_{\mathcal{H}^{m-1}})d\tau.
\end{align}
Also, by using Lemma \ref{L2.3}, we obtain that
\begin{align}\label{3.44}
\Big{|}\int^t_0\int_\Omega\mathcal{Z}^\alpha(n\nabla c)\nabla\mathcal{Z}^\alpha n\,dxd\tau\Big{|}\leq\,&D_\delta D_{m+2}M(t)\int^t_0(\| n\|^2_{\mathcal{H}^m}+\|\nabla c\|^2_{\mathcal{H}^{m}})d\tau\nonumber\\
&+\delta\int^t_0\|\nabla n\|^2_{\mathcal{H}^{m}}d\tau.
\end{align}
Therefore, from \eqref{3.42}-\eqref{3.44}, we get that
\begin{align}\label{3.45}
&\Big{|}\int_0^t\int_\Omega\mathcal{Z}^\alpha(\nabla\cdot(n\nabla c))\mathcal{Z}^\alpha n\,dx d\tau\Big{|}\nonumber\\
\leq &\,D_\delta D_{m+2}(1+M(t))\int^t_0\big{(}\| n\|^2_{\mathcal{H}^m}+\|\nabla n\|^2_{\mathcal{H}^{m-1}}\nonumber\\
&+\|\nabla c\|^2_{\mathcal{H}^{m}}+\|\nabla^2 c\|^2_{\mathcal{H}^{m-1}}\big{)}d\tau+\delta\int^t_0\|\nabla n\|^2_{\mathcal{H}^{m}}d\tau.
\end{align}

Finally, we estimate the commutator term in \eqref{3.38}. Similar to \eqref{3.31}, by using Lemma \ref{L2.3}, we have
\begin{align}\label{3.46}
\Big{|}\int_0^t\int_\Omega\mathcal{C}_2\mathcal{Z}^\alpha n\,dxd\tau\Big{|}
\leq& \,D_{m+1}M(t)\int_0^t\big{(}\|(n,u)\|^2_{\mathcal{H}^{m}}+\|\nabla n\|^2_{\mathcal{H}^{m-1}}\big{)}d\tau.
\end{align}

Similar to $n$, we apply $\mathcal{Z}^\alpha$ to \eqref{1.1.2} for $|\alpha|=m$ to obtain
\begin{align}
&\mathcal{Z}^\alpha c_t+ u\cdot\nabla \mathcal{Z}^\alpha c=\mathcal{Z}^\alpha\Delta c-\mathcal{Z}^\alpha(nc))+\mathcal{C}_3,\label{3.47}
\end{align}
where
\begin{align*}
\mathcal{C}_3:=-[\mathcal{Z}^\alpha,u\cdot\nabla]c.
\end{align*}
Multiplying \eqref{3.47} by $\mathcal{Z}^\alpha c$ and integrating by parts, we have
\begin{align}\label{3.48}
 \frac{1}{2}\frac{d}{dt}\|\mathcal{Z}^\alpha c\|^2
=(\mathcal{Z}^\alpha\Delta c,\mathcal{Z}^\alpha c)-(\mathcal{Z}^\alpha(nc),\mathcal{Z}^\alpha c)+(\mathcal{C}_3,\mathcal{Z}^\alpha c).
\end{align}
By using Lemma \ref{L2.3}, we can easily obtain
\begin{align}\label{3.49}
 \Big{|}\int^t_0\int_\Omega\mathcal{Z}^\alpha(nc)\mathcal{Z}^\alpha c\,dxd\tau\Big{|}\leq D_{m+2}M(t)\int^t_0(\| n\|^2_{\mathcal{H}^m}+\| c\|^2_{\mathcal{H}^m})\,d\tau.
\end{align}
Similar to \eqref{3.40} and \eqref{3.46}, we can directly get
\begin{align}
&\int_\Omega \mathcal{Z}^\alpha\Delta c \mathcal{Z}^\alpha c\,dx\leq-d\|\nabla c\|^2_{\mathcal{H}^m}+D_\delta D_{m+2}(\| c\|^2_{\mathcal{H}^m}+\|\nabla c\|^2_{\mathcal{H}^{m-1}})+\delta\|\nabla^2 c\|^2_{\mathcal{H}^{m-1}},\label{3.50}\\
&\Big{|}\int_0^t\int_\Omega\mathcal{C}_3\mathcal{Z}^\alpha c\,dxd\tau\Big{|}
\leq \,D_{m+1}M(t)\int_0^t(\|(c,u)\|^2_{\mathcal{H}^{m}}+\|\nabla c\|^2_{\mathcal{H}^{m-1}})d\tau.\label{3.51}
\end{align}

Consequently, from \eqref{3.40}, \eqref{3.45}-\eqref{3.46}, \eqref{3.49}-\eqref{3.51}, and the assumptions with respect to $|\alpha|\leq m-1$, we get \eqref{3.8.1.1}.
This ends the proof of Lemma \ref{L3.5}.
\end{proof}

\subsection{Conormal Energy Estimates for $\nabla n$ and $\nabla c$}
In this subsection, we shall give some uniform estimates to $\|\nabla n\|_{\mathcal{H}^{m-1}}$ and $\|\nabla c\|_{\mathcal{H}^{m}}$. First, we deal with $\nabla n$. We have

\begin{Lemma}\label{L3.6} For every $m\geq 1$, a smooth solution of the problem \eqref{1.1.1}-\eqref{1.2} satisfies the estimate
\begin{align}\label{3.52}
&\sup_{0\leq\tau\leq t}\|\nabla n\|^2_{\mathcal{H}^{m-1}}+\int_0^t\|\Delta n\|^2_{\mathcal{H}^{m-1}}d\tau\nonumber\\
\leq&\,D_{m+2}\,\Big{\{}\|\nabla n_0\|^2_{\mathcal{H}^{m-1}}+\delta\int^t_0\|\nabla\Delta n\|^2_{\mathcal{H}^{m-2}}d\tau+\delta\int^t_0\|\nabla\Delta c\|^2_{\mathcal{H}^{m-1}}d\tau\nonumber\\
&+D_\delta\big{(}1+P(M(t))\big{)}\int_0^t\big{(}\|(n,u)\|_{\mathcal{H}^{m-1}}^2+\|\nabla(n,c,u)\|_{\mathcal{H}^{m-1}}^2\nonumber\\
&+\|\nabla^2 (n,c) \|_{\mathcal{H}^{m-1}}^2\big{)}d\tau\Big{\}},
\end{align}
where $\delta>0$ is a small enough constant.
\end{Lemma}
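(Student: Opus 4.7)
The plan is to apply the conormal derivative $\mathcal{Z}^\alpha$ with $|\alpha|\leq m-1$ to the equation \eqref{1.1.1} and test against $-\Delta \mathcal{Z}^\alpha n$. This pairing is natural because, after integration by parts using the Neumann condition $\partial_\nu n=0$, it produces simultaneously $\tfrac{1}{2}\tfrac{d}{dt}\|\nabla \mathcal{Z}^\alpha n\|^2$ on the left and the coercive dissipation $\|\Delta \mathcal{Z}^\alpha n\|^2$ on the right; summing over $|\alpha|\leq m-1$ and using $\mathcal{Z}^\alpha\Delta n = \Delta\mathcal{Z}^\alpha n + [\mathcal{Z}^\alpha,\Delta]n$ to pass back to $\|\Delta n\|_{\mathcal{H}^{m-1}}$ modulo controllable commutators recovers precisely the two quantities on the left of \eqref{3.52}.

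First I would derive the commuted equation for $\mathcal{Z}^\alpha n$, collecting on the right-hand side the three commutators $[\mathcal{Z}^\alpha,\Delta]n$, $[\mathcal{Z}^\alpha,u\cdot\nabla]n$, and the chemotaxis term $\mathcal{Z}^\alpha\nabla\cdot(n\nabla c)$. The integration by parts of the time-derivative term leaves a boundary integral of $\partial_t\mathcal{Z}^\alpha n\,\partial_\nu\mathcal{Z}^\alpha n$; this is not zero a priori, since the tangential fields $Z_1,Z_2$ do not commute with $\partial_\nu$, but writing $\partial_\nu\mathcal{Z}^\alpha n = \mathcal{Z}^\alpha\partial_\nu n + [\partial_\nu,\mathcal{Z}^\alpha]n$ shows the first summand vanishes on $\partial\Omega$ while the second is of lower conormal order. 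The anisotropic trace estimate in Lemma \ref{L2.2} then absorbs the boundary contribution into the lower-order quantities listed on the right of \eqref{3.52}.

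Next, for the convection and chemotaxis terms I would expand by the Leibniz rule, place the factor carrying the fewest derivatives in $L^\infty_{t,x}$ (generating the $P(M(t))$ prefactor), and put the other in the appropriate conormal Sobolev norm via the Moser-type product estimate of Lemma \ref{L2.3}. The riskiest piece is $n\,\Delta c$ inside $\nabla\cdot(n\nabla c)$ when its derivatives land on $\Delta c$: pairing against $\Delta \mathcal{Z}^\alpha n$ and invoking Young's inequality leaves a residual $\delta\|\nabla\Delta c\|_{\mathcal{H}^{m-1}}^2$, which is precisely one of the absorbable terms in the statement, while the complementary $\delta\|\Delta \mathcal{Z}^\alpha n\|^2$ is absorbed into the LHS.

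The step I expect to be the main obstacle is controlling the commutator $[\mathcal{Z}^\alpha,\Delta]n$ paired with $\Delta \mathcal{Z}^\alpha n$: although this commutator is formally of order $m$, some of its terms genuinely carry three spatial derivatives on $n$ at conormal order $m-2$, and a naive Cauchy--Schwarz bound would not close against the available dissipation. The remedy will be to integrate by parts once more, moving a spatial derivative from $\Delta\mathcal{Z}^\alpha n$ onto the offending term so that only $\|\nabla\Delta n\|_{\mathcal{H}^{m-2}}$ appears; Young's inequality then produces the admissible $\delta\|\nabla\Delta n\|_{\mathcal{H}^{m-2}}^2$ contribution on the right of \eqref{3.52}, to be absorbed in the subsequent subsection when $\|\Delta n\|_{\mathcal{H}^{m-1}}$ is estimated. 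Finally, I would integrate in time on $[0,t]$, sum over $|\alpha|\leq m-1$, choose $\delta$ small, and invoke the lower-order estimates from Lemmas \ref{L3.1}--\ref{L3.5} (together with induction on $|\alpha|$ if needed) to close \eqref{3.52}.
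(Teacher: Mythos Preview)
Your overall strategy is sound and closely parallels the paper's, including the treatment of the chemotaxis term (the dangerous piece $n\Delta c$ indeed produces the $\delta\|\nabla\Delta c\|_{\mathcal{H}^{m-1}}^2$ contribution) and of the commutator with the Laplacian (the paper likewise routes this through $\delta\|\nabla\Delta n\|_{\mathcal{H}^{m-2}}^2$). The one genuine difference is the choice of test function and, consequently, where the boundary term falls. The paper does \emph{not} apply $\mathcal{Z}^\alpha$ and test with $-\Delta\mathcal{Z}^\alpha n$; it applies $\mathcal{Z}^\alpha\nabla$ to \eqref{1.1.1} and pairs with $\mathcal{Z}^\alpha\nabla n$. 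The time-derivative term is then simply $\tfrac12\tfrac{d}{dt}\|\mathcal{Z}^\alpha\nabla n\|^2$ with \emph{no} boundary contribution; the boundary integral appears instead on the dissipation side as $\int_{\partial\Omega}\mathcal{Z}^\alpha\Delta n\,(\nu\cdot\mathcal{Z}^\alpha\nabla n)\,d\sigma$. After one tangential integration by parts on $\partial\Omega$, the first factor carries at most $m-2$ conormal derivatives of $\Delta n$, whose trace is controlled by $\|\nabla\Delta n\|_{\mathcal{H}^{m-2}}$ and $\|\Delta n\|_{\mathcal{H}^{m-2}}$, while $\nu\cdot\mathcal{Z}^\alpha\nabla n$ reduces via the Neumann condition to $\nabla n$ with at most $m-2$ conormal derivatives --- exactly matching the RHS of \eqref{3.52}.

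In your scheme the boundary integral $\int_{\partial\Omega}\partial_t\mathcal{Z}^\alpha n\,\partial_\nu\mathcal{Z}^\alpha n\,d\sigma$ is less innocent than you suggest. It is true that $\partial_\nu\mathcal{Z}^\alpha n=[\partial_\nu,\mathcal{Z}^\alpha]n$ on $\partial\Omega$ is of lower order, but the companion factor $\partial_t\mathcal{Z}^\alpha n$ already sits at conormal order $m$, and its $L^2(\partial\Omega)$ trace via Lemma~\ref{L2.2} brings in $\|n\|_{\mathcal{H}^m}$ and $\|\nabla n\|_{\mathcal{H}^m}$, neither of which appears on the RHS of \eqref{3.52}. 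Shifting a tangential derivative across by integration by parts along $\partial\Omega$ only transfers the top-order count to the other factor. Your argument would therefore yield a variant of the lemma with $\|n\|^2_{\mathcal{H}^m}$ (and possibly $\delta\int_0^t\|\nabla n\|^2_{\mathcal{H}^m}\,d\tau$) in place of $\|n\|^2_{\mathcal{H}^{m-1}}$; this is harmless for Theorem~\ref{Th3.1} because Lemma~\ref{L3.5} already controls those quantities, but it does not give Lemma~\ref{L3.6} exactly as stated. The paper's ordering --- gradient \emph{inside} the conormal derivative, then test with the same quantity --- is precisely what keeps every boundary contribution within the RHS of \eqref{3.52}.
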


\begin{proof}
Multiplying \eqref{1.1.1} by $\Delta n$ yields that
\begin{align}
\int_\Omega n_t\Delta n\,dx+\int_\Omega u\cdot\nabla n\Delta n\,dx=\int_\Omega\Delta n\Delta n\,dx-\int_\Omega\nabla\cdot(n\nabla c)\Delta n\,dx.
\end{align}
Integrating by parts and using the boundary condition \eqref{1.2.1}, we obtain that
\begin{align}
\int_\Omega n_t\Delta n\,dx=-\frac{1}{2}\frac{d}{dt}\int_\Omega |\nabla n|^2\,dx.
\end{align}
Furthermore, by using Young's inequality, we get
\begin{align}
\sup_{0\leq\tau\leq t}\|\nabla n\|^2+\int_0^t\|\Delta n\|^2\,d\tau\leq D\Big{\{}\|\nabla n_0\|^2+M(t)\int_0^t(\|\nabla n\|^2+\| n\|^2)\,d\tau\Big{\}}.\nonumber
\end{align}
Hence, the case of $|\alpha|=1$ is true.

Next, we consider the higher order estimates. Assume that \eqref{3.52} has been proved for $|\alpha|\leq m-2$, we need to prove it holds for $|\alpha|=m-1$. By applying $\mathcal{Z}^\alpha\nabla$ with $|\alpha|=m-1$ to \eqref{1.1.1}, we obtain that
\begin{align}\label{3.56}
 \mathcal{Z}^\alpha\nabla n_t-\mathcal{Z}^\alpha\nabla\Delta n=-\mathcal{Z}^\alpha\nabla(\nabla n\cdot\nabla c)-\mathcal{Z}^\alpha\nabla(n\Delta c)-\mathcal{Z}^\alpha\nabla(u\cdot\nabla n).
\end{align}
Multiplying \eqref{3.56} by $\mathcal{Z}^\alpha\nabla n$ leads to
\begin{align*}
&\frac{1}{2}\frac{d}{dt}\int_\Omega|\mathcal{Z}^\alpha\nabla n|^2\,dx-\int_\Omega\mathcal{Z}^\alpha\nabla \Delta n\mathcal{Z}^\alpha\nabla  n\,dx\nonumber\\
=&-\int_\Omega\mathcal{Z}^\alpha\nabla(\nabla n\cdot\nabla c)\mathcal{Z}^\alpha\nabla n\,dx
-\int_\Omega\mathcal{Z}^\alpha\nabla(n\Delta c)\mathcal{Z}^\alpha\nabla n\,dx\nonumber\\
&-\int_\Omega\mathcal{Z}^\alpha\nabla(u\cdot\nabla n)\mathcal{Z}^\alpha\nabla n\,dx.
\end{align*}
By using integration by parts and the boundary condition \eqref{1.2.1}, we obtain that
\begin{align*}
&-\int_\Omega\mathcal{Z}^\alpha\nabla \Delta n\mathcal{Z}^\alpha\nabla  n\,dx\nonumber\\
=&-\int_{\partial\Omega}\mathcal{Z}^\alpha \Delta n\mathcal{Z}^\alpha \nabla n\cdot\nu\,d\sigma+\int_{\Omega}|\dv(\mathcal{Z}^\alpha \nabla n)|^2\,dx\nonumber\\
&+\int_{\Omega}[\mathcal{Z}^\alpha,\dv]\nabla n\dv(\mathcal{Z}^\alpha \nabla n)\,dx-\int_{\Omega}[\mathcal{Z}^\alpha,\nabla]\Delta n\cdot\mathcal{Z}^\alpha \nabla n\,dx.
\end{align*}
Hence, we have
\begin{align}\label{3.57}
&\frac{1}{2}\frac{d}{dt}\int_\Omega|\mathcal{Z}^\alpha\nabla n|^2\,dx+\int_{\Omega}|\dv(\mathcal{Z}^\alpha \nabla n)|^2\,dx\nonumber\\
=&-\int_\Omega\mathcal{Z}^\alpha\nabla(\nabla n\cdot\nabla c)\mathcal{Z}^\alpha\nabla n\,dx
-\int_\Omega\mathcal{Z}^\alpha\nabla(n\Delta c)\mathcal{Z}^\alpha\nabla n\,dx\nonumber\\
&-\int_\Omega\mathcal{Z}^\alpha\nabla(u\cdot\nabla n)\mathcal{Z}^\alpha\nabla n\,dx+\int_{\partial\Omega}\mathcal{Z}^\alpha \Delta n\mathcal{Z}^\alpha \nabla n\cdot\nu\,d\sigma\nonumber\\
&-\int_{\Omega}[\mathcal{Z}^\alpha,\dv]\nabla n\dv(\mathcal{Z}^\alpha \nabla n)\,dx
+\int_{\Omega}[\mathcal{Z}^\alpha,\nabla]\Delta n\cdot\mathcal{Z}^\alpha \nabla n\,dx.
\end{align}

First, applying Young's inequality, we can easily arrive at
\begin{align}\label{3.58}
&\Big{|}\int^t_0\int_{\Omega}[\mathcal{Z}^\alpha,\dv]\nabla n\dv(\mathcal{Z}^\alpha \nabla n)\,dxd\tau-\int^t_0\int_{\Omega}[\mathcal{Z}^\alpha,\nabla]\Delta n\cdot\mathcal{Z}^\alpha \nabla n\,dxd\tau\Big{|}\nonumber\\
\leq&\,\delta\int^t_0\|\nabla\Delta n\|^2_{\mathcal{H}^{m-2}}d\tau+\delta\int^t_0\|\dv(\mathcal{Z}^\alpha \nabla n)\|^2d\tau\nonumber\\
&+D_\delta D_{m+1}\big{(}\int^t_0\|\nabla^2 n\|^2_{\mathcal{H}^{m-2}}d\tau+\int^t_0\|\nabla n\|^2_{\mathcal{H}^{m-1}}d\tau\big{)}.
\end{align}

Next, using Lemma \ref{L2.3}, we have
\begin{align}\label{3.59}
&\Big{|}\int^t_0\int_{\Omega}\mathcal{Z}^\alpha\nabla(\nabla n\cdot\nabla c)\mathcal{Z}^\alpha\nabla n\,dxd\tau\Big{|}\nonumber\\
\leq\,& D_{m+2} P(M(t))\int^t_0\big{(}\|\nabla n\|^2_{\mathcal{H}^{m-1}}
+\|\nabla c\|^2_{\mathcal{H}^{m-1}}+\|\nabla^2 n\|^2_{\mathcal{H}^{m-1}}+\|\nabla^2 c\|^2_{\mathcal{H}^{m-1}}\big{)}\,d\tau.
\end{align}

Furthermore, based on Lemma \ref{L2.3} and Young's inequality, we obtain
\begin{align}\label{3.60}
&\big{|}\int^t_0\int_{\Omega}\mathcal{Z}^\alpha\nabla( n\Delta c)\mathcal{Z}^\alpha\nabla n\,dxd\tau\big{|}\nonumber\\
\leq&\,\delta\int^t_0\|\nabla\Delta c\|^2_{\mathcal{H}^{m-1}}\,d\tau +D_\delta D_{m+2} P(M(t))\nonumber\\
&\times\int^t_0(\|\nabla n\|^2_{\mathcal{H}^{m-1}}+\|\Delta c\|^2_{\mathcal{H}^{m-1}}+\| n\|^2_{\mathcal{H}^{m-1}})\,d\tau.
\end{align}

Now, we turn to estimate the boundary term in \eqref{3.57}. Note that when $|\alpha_0|=m-1$ or $|\alpha_{13}|\neq0$, this term vanishes. So we can integrate by parts along the boundary to deduce that
\begin{align}\label{3.61}
\big{|}\int_0^t\int_{\partial\Omega}\mathcal{Z}^\alpha \Delta n\mathcal{Z}^\alpha \nabla n\cdot\nu\,d\sigma d\tau\big{|}\leq\int_0^t|\partial_t^{\alpha_0}Z_y^{\beta}\Delta n|_{L^2(\partial\Omega)}|\mathcal{Z}^\alpha\nabla n\cdot\nu|_{H^1(\partial\Omega)}d\tau,
\end{align}
where $|\beta|=m-|\alpha_0|-1$. Due to Lemma \ref{L2.2}, we arrive at
\begin{align}\label{3.62}
|\partial_t^{\alpha_0}Z_y^{\beta}\Delta n|_{L^2(\partial\Omega)}\leq D_{m+2}\big{(}\|\nabla\Delta n\|^\frac{1}{2}_{\mathcal{H}^{m-2}}+\|\Delta n\|^\frac{1}{2}_{\mathcal{H}^{m-2}}\big{)}\|\Delta n\|^\frac{1}{2}_{\mathcal{H}^{m-2}}.
\end{align}
With the help of the boundary condition \eqref{1.2.1} and Lemma \ref{L2.2}, we have
\begin{align}\label{3.63}
|\mathcal{Z}^\alpha\nabla n\cdot\nu|_{H^1(\partial\Omega)}\leq D_{m+2}(\|\nabla^2 n\|^\frac{1}{2}_{\mathcal{H}^{m-1}}+\|\nabla n\|^\frac{1}{2}_{\mathcal{H}^{m-1}})\|\nabla n\|^\frac{1}{2}_{\mathcal{H}^{m-1}}.
\end{align}
Based on \eqref{3.61}-\eqref{3.63} and Young's inequality, we can get
\begin{align}\label{3.64}
\big{|}\int_0^t\int_{\partial\Omega}\mathcal{Z}^\alpha \Delta n\mathcal{Z}^\alpha \nabla n\cdot\nu\,d\sigma d\tau\big{|}\leq &\,\delta\int_0^t\|\nabla\Delta n\|^2_{\mathcal{H}^{m-2}}\,d\tau+D_\delta D_{m+2}\int_0^t(\|\nabla n\|^2_{\mathcal{H}^{m-1}}\nonumber\\
&+\|\nabla^2 n\|^2_{\mathcal{H}^{m-1}})\,d\tau.
\end{align}

Finally, we deal with the term involving $u$ in \eqref{3.57}. Integrating by parts leads to that
\begin{align}\label{3.65}
&\int_\Omega\mathcal{Z}^\alpha\nabla(u\cdot\nabla n)\mathcal{Z}^\alpha\nabla n\,dx\nonumber\\
=&\int_{\partial\Omega}\mathcal{Z}^\alpha(u\cdot\nabla n)\mathcal{Z}^\alpha\nabla n\cdot\nu\,dx+\int_{\Omega}\mathcal{Z}^\alpha(u\cdot\nabla n)\dv(\mathcal{Z}^\alpha\nabla n)\,dx\nonumber\\
&+\int_{\Omega}[\mathcal{Z}^\alpha,\nabla](u\cdot\nabla n)\mathcal{Z}^\alpha\nabla n\,dx.
\end{align}
By using Lemmas \ref{L2.3} and \ref{L2.2}, we get
\begin{align}\label{3.66}
&\Big{|}\int^t_0\int_{\partial\Omega}\mathcal{Z}^\alpha(u\cdot\nabla n)\mathcal{Z}^\alpha\nabla n\cdot\nu\,dxd\tau\Big{|}\nonumber\\
\leq&\,D_{m+2}(1+P(M(t)))\int^t_0\big{(}\|\nabla^2 n\|^2_{\mathcal{H}^{m-1}}
+\|\nabla n\|^2_{\mathcal{H}^{m-1}}+\| u\|^2_{\mathcal{H}^{m-1}}+\|\nabla u\|^2_{\mathcal{H}^{m-1}}\big{)}\,d\tau.
\end{align}
Applying Young's inequality and Lemma \ref{L2.3}, we obtain that
\begin{align}\label{3.67}
&\Big{|}\int^t_0\int_{\Omega}\mathcal{Z}^\alpha(u\cdot\nabla n)\dv(\mathcal{Z}^\alpha\nabla n)\,dxd\tau+\int^t_0\int_{\Omega}[\mathcal{Z}^\alpha,\nabla](u\cdot\nabla n)\mathcal{Z}^\alpha\nabla n\,dxd\tau\Big{|}\nonumber\\
\leq&\,\delta\int^t_0\|\dv(\mathcal{Z}^\alpha\nabla n)\|^2\,d\tau
+D_\delta D_{m+2}P(M(t))\int^t_0\big{(}\|\nabla n\|^2_{\mathcal{H}^{m-1}}+\|\nabla^2 n\|^2_{\mathcal{H}^{m-2}}\nonumber\\
&+\| u\|^2_{\mathcal{H}^{m-2}}+\|\nabla u\|^2_{\mathcal{H}^{m-2}}\big{)}d\tau.
\end{align}
Combination of \eqref{3.65}-\eqref{3.67} yields that
\begin{align}\label{3.68}
&\Big{|}\int^t_0\int_\Omega\mathcal{Z}^\alpha\nabla(u\cdot\nabla n)\mathcal{Z}^\alpha\nabla n\,dxd\tau\Big{|}\nonumber\\
\leq&\,\delta\int^t_0\|\dv(\mathcal{Z}^\alpha\nabla n)\|^2\,d\tau+D_{m+2}(1+P(M(t)))\nonumber\\
&\times\int^t_0(\|\nabla^2 n\|^2_{\mathcal{H}^{m-1}}+\|\nabla n\|^2_{\mathcal{H}^{m-1}}
+\| u\|^2_{\mathcal{H}^{m-1}}+\|\nabla u\|^2_{\mathcal{H}^{m-1}})\,d\tau.
\end{align}

Based on the elliptic regularity results with Neumann boundary condition, we obtain that
\begin{align}\label{3.69}
\|\nabla^2 n\|^2_{\mathcal{H}^{m-1}}\leq D_{m+2}(\|\nabla n\|^2_{\mathcal{H}^{m-1}}+\|\Delta n\|^2_{\mathcal{H}^{m-1}}).
\end{align}

Consequently, the combination of \eqref{3.57}-\eqref{3.59}, \eqref{3.64}, \eqref{3.68}-\eqref{3.69} and the inductive assumption yield \eqref{3.52}. Therefore, we complete the proof of Lemma \ref{L3.6}.
\end{proof}

Next, we give the uniform estimate to $\nabla c$.
\begin{Lemma}\label{L3.7} For every $m\geq 0$, a smooth solution of the problem \eqref{1.1.1}-\eqref{1.2} satisfies the estimate
\begin{align}\label{3.70}
&\sup_{0\leq\tau\leq t}\|\nabla c\|^2_{\mathcal{H}^{m}}+\int_0^t\|\Delta c\|^2_{\mathcal{H}^{m}}d\tau\nonumber\\
\leq&\,D_{m+2}\,\Big{\{}\|\nabla c_0\|^2_{\mathcal{H}^{m}}+\delta\int^t_0\|\nabla\Delta c\|^2_{\mathcal{H}^{m-1}}d\tau+\delta\int^t_0\|\nabla n\|^2_{\mathcal{H}^{m}}d\tau\nonumber\\
&\quad\quad\quad+D_\delta\big{(}1+P(M(t))\big{)}\int_0^t\big{(}\|(n,c,u)\|_{\mathcal{H}^{m}}^2+\|\nabla c\|_{\mathcal{H}^{m}}^2+\|\nabla u\|_{\mathcal{H}^{m-1}}^2\nonumber\\
&\quad\quad\quad+\|\nabla^2c\|_{\mathcal{H}^{m-1}}^2\big{)}d\tau\Big{\}},
\end{align}
where $\delta>0$ is a small enough constant.
\end{Lemma}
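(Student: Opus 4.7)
The plan is to mirror the argument used for Lemma \ref{L3.6}, working from equation \eqref{1.1.2} instead of \eqref{1.1.1}. The crucial difference is that the coupling in the $c$-equation enters only through the zeroth-order product $cn$ rather than the first-order divergence $\nabla\cdot(n\nabla c)$. This allows the estimate to be pushed up by one conormal derivative relative to that of $\nabla n$, so that the right-hand side only needs to absorb $\delta\int_0^t\|\nabla n\|^2_{\mathcal{H}^{m}}d\tau$ (which will be paid for by the dissipation $\int_0^t\|\Delta n\|^2_{\mathcal{H}^{m-1}}d\tau$ produced in the $n$-estimate, combined with Neumann elliptic regularity).

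\smallskip

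I would first establish the case $m=0$ by multiplying \eqref{1.1.2} by $-\Delta c$, integrating by parts using $\partial_\nu c=0$, and applying Young's inequality with $M(t)$ controlling the $L^\infty$ norms of $c$ and $\nabla c$. For the inductive step, assume the bound for indices $|\alpha|\leq m-1$ and apply $\mathcal{Z}^\alpha\nabla$ with $|\alpha|=m$ to \eqref{1.1.2}:
\begin{align*}
\mathcal{Z}^\alpha\nabla c_t-\mathcal{Z}^\alpha\nabla\Delta c=-\mathcal{Z}^\alpha\nabla(u\cdot\nabla c)-\mathcal{Z}^\alpha\nabla(cn).
\end{align*}
Testing against $\mathcal{Z}^\alpha\nabla c$ and integrating by parts as in \eqref{3.57} yields
\begin{align*}
\tfrac{1}{2}\tfrac{d}{dt}\|\mathcal{Z}^\alpha\nabla c\|^2+\|\mathrm{div}(\mathcal{Z}^\alpha\nabla c)\|^2=\text{(commutators)}+\text{(boundary)}+\text{(source)}+\text{(transport)}.
\end{align*}

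\smallskip

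For the source term $\mathcal{Z}^\alpha\nabla(cn)$, Lemma \ref{L2.3} provides a bound involving $\|\nabla c\|_{\mathcal{H}^m}$, $\|\nabla n\|_{\mathcal{H}^m}$, $\|(c,n)\|_{\mathcal{H}^m}$ and $P(M(t))$; the factor $\|\nabla n\|_{\mathcal{H}^m}$ is absorbed with a small parameter $\delta$ producing the term $\delta\int_0^t\|\nabla n\|^2_{\mathcal{H}^m}d\tau$ in \eqref{3.70}. The transport term $\mathcal{Z}^\alpha\nabla(u\cdot\nabla c)$ is treated exactly as in \eqref{3.65}--\eqref{3.68}: split via integration by parts into a boundary piece, a $\mathrm{div}(\mathcal{Z}^\alpha\nabla c)$ piece absorbed by the dissipation, and interior commutator pieces bounded by Lemma \ref{L2.3}. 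The commutators $[\mathcal{Z}^\alpha,\mathrm{div}]\nabla c$ and $[\mathcal{Z}^\alpha,\nabla]\Delta c$ are controlled by $\|\nabla^2 c\|_{\mathcal{H}^{m-1}}$ and by $\delta\|\nabla\Delta c\|^2_{\mathcal{H}^{m-1}}$ plus lower-order terms.

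\smallskip

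The main obstacle, as in Lemma \ref{L3.6}, is the boundary integral $\int_{\partial\Omega}\mathcal{Z}^\alpha\Delta c\,\mathcal{Z}^\alpha\nabla c\cdot\nu\,d\sigma$. When $\alpha$ consists purely of time derivatives or contains the normal vector field $Z_3$, this integral vanishes thanks to $\partial_\nu c=0$; otherwise, at least one tangential derivative is available, and I would integrate it by parts along $\partial\Omega$ to trade it for $|\partial_t^{\alpha_0}Z_y^\beta\Delta c|_{L^2(\partial\Omega)}\cdot|\mathcal{Z}^\alpha\nabla c\cdot\nu|_{H^1(\partial\Omega)}$ with $|\beta|=m-|\alpha_0|$. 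Applying the anisotropic trace estimate of Lemma \ref{L2.2} to each factor, then Young's inequality, produces $\delta\|\nabla\Delta c\|^2_{\mathcal{H}^{m-1}}$ (to be absorbed on the left of the full energy inequality in Theorem \ref{Th3.1}) plus $D_\delta D_{m+2}(\|\nabla c\|^2_{\mathcal{H}^m}+\|\nabla^2 c\|^2_{\mathcal{H}^{m-1}})$. Finally, the Neumann elliptic regularity $\|\nabla^2 c\|^2_{\mathcal{H}^m}\leq D_{m+2}(\|\nabla c\|^2_{\mathcal{H}^m}+\|\Delta c\|^2_{\mathcal{H}^m})$ converts the $\mathrm{div}(\mathcal{Z}^\alpha\nabla c)$ dissipation into the stated $\int_0^t\|\Delta c\|^2_{\mathcal{H}^m}d\tau$. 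Assembling all pieces and invoking the inductive hypothesis yields \eqref{3.70}.
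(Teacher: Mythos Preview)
Your proposal is correct and follows essentially the same approach as the paper's proof: the base case via testing \eqref{1.1.2} against $-\Delta c$, the inductive step via $\mathcal{Z}^\alpha\nabla$ tested against $\mathcal{Z}^\alpha\nabla c$, the same splitting into commutator, boundary, source, and transport pieces, and the same use of Lemma~\ref{L2.2} on the boundary term together with Neumann elliptic regularity to close. One minor slip: after integrating one tangential derivative by parts along $\partial\Omega$, the remaining spatial index count on $\Delta c$ should be $|\beta|=m-|\alpha_0|-1$, not $m-|\alpha_0|$; and the trace estimate on $|\mathcal{Z}^\alpha\nabla c\cdot\nu|_{H^1(\partial\Omega)}$ produces an additional $\delta\|\nabla^2 c\|^2_{\mathcal{H}^m}$ term (as in the paper's \eqref{3.81}) that must also be absorbed by the dissipation via elliptic regularity.
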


\begin{proof}
Multiplying \eqref{1.1.2} by $\Delta n$ yields that
\begin{align}
\int_\Omega c_t\Delta c\,dx+\int_\Omega u\cdot\nabla c\Delta c\,dx=\int_\Omega\Delta c\Delta c\,dx-\int_\Omega nc\Delta c\,dx.
\end{align}
Integration by parts and the boundary condition \eqref{1.2.1} lead to
\begin{align}
\int_\Omega c_t\Delta c\,dx=-\frac{1}{2}\frac{d}{dt}\int_\Omega |\nabla c|^2\,dx.
\end{align}
By using Young's inequality, we arrive at
\begin{align}
\sup_{0\leq\tau\leq t}\|\nabla c\|^2+\int_0^t\|\Delta c\|^2\,d\tau\leq D\Big{\{}\|\nabla c_0\|^2+M(t)\int_0^t(\|\nabla c\|^2+\| c\|^2)\,d\tau\Big{\}}.
\end{align}
Hence, \eqref{3.70} holds in the case of $|\alpha|=0$.

Next, we deal with the higher order estimates. Assume that \eqref{3.70} has been proved for $|\alpha|\leq m-1$, we need to show it still holds for $|\alpha|=m$. By applying $\mathcal{Z}^\alpha\nabla$ with $|\alpha|=m$ to \eqref{1.1.2}, we obtain that
\begin{align}\label{3.74}
 \mathcal{Z}^\alpha\nabla c_t-\mathcal{Z}^\alpha\nabla\Delta n=-\mathcal{Z}^\alpha\nabla( nc)-\mathcal{Z}^\alpha\nabla(u\cdot\nabla c).
\end{align}
By multiplying \eqref{3.74} by $\mathcal{Z}^\alpha\nabla c$, we obtain that
\begin{align*}
\frac{1}{2}\frac{d}{dt}\int_\Omega|\mathcal{Z}^\alpha\nabla c|^2\,dx-\int_\Omega\mathcal{Z}^\alpha\nabla \Delta c\mathcal{Z}^\alpha\nabla  c\,dx=&-\int_\Omega\mathcal{Z}^\alpha\nabla(nc)\mathcal{Z}^\alpha\nabla c\,dx\nonumber\\
&-\int_\Omega\mathcal{Z}^\alpha\nabla(u\cdot\nabla c)\mathcal{Z}^\alpha\nabla c\,dx.
\end{align*}
By integrating by parts and using the boundary condition \eqref{1.2.1}, we obtain that
\begin{align*}
&-\int_\Omega\mathcal{Z}^\alpha\nabla \Delta c\mathcal{Z}^\alpha\nabla  c\,dx\nonumber\\
=&-\int_{\partial\Omega}\mathcal{Z}^\alpha \Delta c\mathcal{Z}^\alpha \nabla c\cdot\nu\,d\sigma+\int_{\Omega}|\dv(\mathcal{Z}^\alpha \nabla c)|^2\,dx\nonumber\\
&+\int_{\Omega}[\mathcal{Z}^\alpha,\dv]\nabla c\dv(\mathcal{Z}^\alpha \nabla c)\,dx-\int_{\Omega}[\mathcal{Z}^\alpha,\nabla]\Delta c\cdot\mathcal{Z}^\alpha \nabla c\,dx.
\end{align*}
Hence, we have
\begin{align}\label{3.75}
&\frac{1}{2}\frac{d}{dt}\int_\Omega|\mathcal{Z}^\alpha\nabla c|^2\,dx+\int_{\Omega}|\dv(\mathcal{Z}^\alpha \nabla c)|^2\,dx\nonumber\\
=&-\int_\Omega\mathcal{Z}^\alpha\nabla(nc)\mathcal{Z}^\alpha\nabla c\,dx
-\int_\Omega\mathcal{Z}^\alpha\nabla(u\cdot\nabla c)\mathcal{Z}^\alpha\nabla c\,dx+\int_{\partial\Omega}\mathcal{Z}^\alpha \Delta c\mathcal{Z}^\alpha \nabla c\cdot\nu\,d\sigma\nonumber\\
&-\int_{\Omega}[\mathcal{Z}^\alpha,\dv]\nabla c\dv(\mathcal{Z}^\alpha \nabla c)\,dx
+\int_{\Omega}[\mathcal{Z}^\alpha,\nabla]\Delta c\cdot\mathcal{Z}^\alpha \nabla c\,dx.
\end{align}

First, similar to Lemma \ref{L3.6}, we can easily arrive at
\begin{align}\label{3.76}
&\Big{|}\int^t_0\int_{\Omega}[\mathcal{Z}^\alpha,\dv]\nabla c\dv(\mathcal{Z}^\alpha \nabla c)\,dxd\tau-\int^t_0\int_{\Omega}[\mathcal{Z}^\alpha,\nabla]\Delta c\cdot\mathcal{Z}^\alpha \nabla c\,dxd\tau\Big{|}\nonumber\\
\leq&\,\delta\int^t_0\|\nabla\Delta c\|^2_{\mathcal{H}^{m-1}}d\tau+\delta\int^t_0\|\dv(\mathcal{Z}^\alpha \nabla c)\|^2d\tau\nonumber\\
&+D_\delta D_{m+2}\Big{(}\int^t_0\|\nabla^2 c\|^2_{\mathcal{H}^{m-1}}d\tau+\int^t_0\|\nabla c\|^2_{\mathcal{H}^{m}}d\tau\Big{)}.
\end{align}

Next, using Lemma \ref{L2.3} and Young's inequality, we have
\begin{align}\label{3.77}
&\Big{|}\int^t_0\int_{\Omega}\mathcal{Z}^\alpha\nabla(nc)\mathcal{Z}^\alpha\nabla c\,dxd\tau\Big{|}\nonumber\\
\leq&\,\delta\int^t_0\|\nabla n\|^2_{\mathcal{H}^{m}} d\tau +D_\delta D_{m+2} P(M(t))\int^t_0(\|c\|^2_{\mathcal{H}^{m}}+\|n\|^2_{\mathcal{H}^{m}}+\|\nabla c\|^2_{\mathcal{H}^{m}})\,d\tau.
\end{align}

Now, we turn to estimate the boundary term in \eqref{3.75}. Like in Lemma \ref{L3.6}, when $|\alpha_0|=m-1$ or $|\alpha_{13}|\neq0$, this term vanishes. Thus, integrating by parts along the boundary lead to
\begin{align}\label{3.78}
\big{|}\int_0^t\int_{\partial\Omega}\mathcal{Z}^\alpha \Delta c\mathcal{Z}^\alpha \nabla c\cdot\nu\,d\sigma d\tau\big{|}\leq\int_0^t|\partial_t^{\alpha_0}Z_y^{\beta}\Delta c|_{L^2(\partial\Omega)}|\mathcal{Z}^\alpha\nabla c\cdot\nu|_{H^1(\partial\Omega)}d\tau,
\end{align}
where $|\beta|=m-|\alpha_0|-1$. By virtue of Lemma \ref{L2.2}, we arrive at
\begin{align}\label{3.79}
|\partial_t^{\alpha_0}Z_y^{\beta}\Delta c|_{L^2(\partial\Omega)}\leq D(\|\nabla\Delta c\|^\frac{1}{2}_{\mathcal{H}^{m-1}}+\|\Delta c\|^\frac{1}{2}_{\mathcal{H}^{m-1}})\|\Delta c\|^\frac{1}{2}_{\mathcal{H}^{m-1}}.
\end{align}
Based on the boundary condition \eqref{1.2.1} and Lemma \ref{L2.2}, we can deduce that
\begin{align}\label{3.80}
|\mathcal{Z}^\alpha\nabla c\cdot\nu|_{H^1(\partial\Omega)}\leq D(\|\nabla^2 c\|^\frac{1}{2}_{\mathcal{H}^{m}}+\|\nabla c\|^\frac{1}{2}_{\mathcal{H}^{m}})\|\nabla c\|^\frac{1}{2}_{\mathcal{H}^{m}}.
\end{align}
Combining \eqref{3.78}-\eqref{3.80} and using Young's inequality, we can get
\begin{align}\label{3.81}
\Big{|}\int_0^t\int_{\partial\Omega}\mathcal{Z}^\alpha \Delta c\mathcal{Z}^\alpha \nabla c\cdot\nu\,d\sigma d\tau\Big{|}\leq &\,\delta\int_0^t\|\nabla\Delta c\|^2_{\mathcal{H}^{m-1}}\,d\tau+\delta\int_0^t\|\nabla^2 c\|^2_{\mathcal{H}^{m}}\,d\tau\nonumber\\
&+D_\delta D_{m+2}\int_0^t(\|\nabla c\|^2_{\mathcal{H}^{m}}+\|\nabla^2 c\|^2_{\mathcal{H}^{m-1}})\,d\tau.
\end{align}

Finally, we deal with the term involving $u$ in \eqref{3.75}. By using integration by parts, it is easy to deduce that
\begin{align}\label{3.82}
&\int_\Omega\mathcal{Z}^\alpha\nabla(u\cdot\nabla c)\mathcal{Z}^\alpha\nabla c\,dx\nonumber\\
=&\int_{\partial\Omega}\mathcal{Z}^\alpha(u\cdot\nabla c)\mathcal{Z}^\alpha\nabla c\cdot\nu\,dx+\int_{\Omega}\mathcal{Z}^\alpha(u\cdot\nabla c)\dv(\mathcal{Z}^\alpha\nabla c)\,dx\nonumber\\
&+\int_{\Omega}[\mathcal{Z}^\alpha,\nabla](u\cdot\nabla c)\mathcal{Z}^\alpha\nabla c\,dx.
\end{align}
Similar to \eqref{3.61}, we can integrate by parts along the boundary to deduce that
\begin{align}\label{3.83}
\Big{|}\int_{\partial\Omega}\mathcal{Z}^\alpha(u\cdot\nabla c)\mathcal{Z}^\alpha\nabla c\cdot\nu\,dx\Big{|}\leq D|\partial_t^{\alpha_0}Z_y^\beta(u\cdot\nabla c)|_{L^2(\partial\Omega)}|\mathcal{Z}^\alpha\nabla c\cdot\nu|_{H^1{\partial\Omega}},
\end{align}
where $|\beta|=m-|\alpha_0|-1$. Applying Lemma \ref{L2.2}, we arrive at
\begin{align*}
|\partial_t^{\alpha_0}Z_y^\beta(u\cdot\nabla c)|_{L^2(\partial\Omega)}&\leq D(\|\nabla(u\cdot\nabla c)\|_{\mathcal{H}^{m-1}}^{\frac{1}{2}}+\|u\cdot\nabla c\|_{\mathcal{H}^{m-1}}^{\frac{1}{2}})\|u\cdot\nabla c\|_{\mathcal{H}^{m-1}}^{\frac{1}{2}},\\
|\mathcal{Z}^\alpha\nabla c\cdot\nu|_{H^1{\partial\Omega}}&\leq D(\|\nabla^2 c\|_{\mathcal{H}^{m}}^{\frac{1}{2}}+\|\nabla c\|_{\mathcal{H}^{m}}^{\frac{1}{2}})\|\nabla c\|_{\mathcal{H}^{m}}^{\frac{1}{2}}.
\end{align*}
By using Lemma \ref{L2.3} and Young's inequality, we obtain that
\begin{align}\label{3.84}
&\Big{|}\int^t_0\int_{\partial\Omega}\mathcal{Z}^\alpha(u\cdot\nabla c)\mathcal{Z}^\alpha\nabla c\cdot\nu\,dxd\tau\Big{|}\nonumber\\
\leq&\, \delta\int^t_0 \|\nabla^2 c\|_{\mathcal{H}^{m}}^2d\tau+(1+P(M(t)))\nonumber\\
&\times\int^t_0 (\|\nabla c\|_{\mathcal{H}^{m}}^2+\|\nabla^2 c\|_{\mathcal{H}^{m-1}}^2+\|u\|_{\mathcal{H}^{m-1}}^2+\|\nabla u\|_{\mathcal{H}^{m-1}}^2)\,d\tau.
\end{align}
In view of Young's inequality, we can get
\begin{align}\label{3.85}
&\Big{|}\int^t_0\int_\Omega\mathcal{Z}^\alpha(u\cdot\nabla c)\dv(\mathcal{Z}^\alpha\nabla c)\,dxd\tau+\int^t_0\int_{\Omega}[\mathcal{Z}^\alpha,\nabla](u\cdot\nabla c)\mathcal{Z}^\alpha\nabla c\,dxd\tau\Big{|}\nonumber\\
\leq& \,\delta\int^t_0\|\dv(\mathcal{Z}^\alpha\nabla c)\|^2\,d\tau
+D(1+P(M(t)))\int^t_0 \big{(}\|\nabla c\|_{\mathcal{H}^{m}}^2+\|\nabla^2 c\|_{\mathcal{H}^{m-1}}^2\nonumber\\
&+\|u\|_{\mathcal{H}^{m}}^2+\|\nabla u\|_{\mathcal{H}^{m-1}}^2\big{)}\,d\tau.
\end{align}
The combination of \eqref{3.84} and \eqref{3.85} yields that
\begin{align}\label{3.86}
&\Big{|}\int^t_0\int_\Omega\mathcal{Z}^\alpha\nabla(u\cdot\nabla c)\mathcal{Z}^\alpha\nabla c\,dxd\tau\Big{|}\nonumber\\
\leq&\,\delta\int^t_0\|\dv(\mathcal{Z}^\alpha\nabla c)\|^2\,d\tau+\delta\int^t_0 \|\nabla^2 c\|_{\mathcal{H}^{m}}^2d\tau+D_{m+2}\big{(}1+P(M(t))\big{)}\nonumber\\
&\times\int^t_0\big{(}\|\nabla c\|^2_{\mathcal{H}^{m}}+\|\nabla^2 c\|^2_{\mathcal{H}^{m-1}}
+\| u\|^2_{\mathcal{H}^{m}}+\|\nabla u\|^2_{\mathcal{H}^{m-1}}\big{)}\,d\tau
\end{align}

Consequently, based on \eqref{3.69}, \eqref{3.76}, \eqref{3.77}, \eqref{3.81}, \eqref{3.86} and the inductive assumption yield \eqref{3.70}. Therefore, we complete the proof of Lemma \ref{L3.7}.
\end{proof}

\subsection{Conormal Energy Estimates for $\Delta n$ and $\Delta c$}
In this subsection, we can establish some uniform estimates for $\|\Delta n\|_{\mathcal{H}^{m-1}}$ and $\|\Delta c\|_{\mathcal{H}^{m-1}}$. We have the following uniform estimate with respect to $n$.
\begin{Lemma}\label{L3.8} For every $m\geq 1$, a smooth solution of the problem \eqref{1.1.1}-\eqref{1.2} satisfies the estimate
\begin{align}\label{3.87}
&\sup_{0\leq\tau\leq t}\|\Delta n\|^2_{\mathcal{H}^{m-1}}+\int_0^t\|\nabla\Delta n\|^2_{\mathcal{H}^{m-1}}d\tau\nonumber\\
\leq&\,D_{m+2}\,\Big{\{}\|\Delta n_0\|^2_{\mathcal{H}^{m-1}}+\big{(}1+P(M(t))\big{)}\int_0^t\big{(}\|(n,c,u)\|_{\mathcal{H}^{m-1}}^2\nonumber\\
&\quad\quad\quad+\|\nabla(n,u)\|_{\mathcal{H}^{m-1}}^2+\|\nabla c\|_{\mathcal{H}^{m}}^2
+\|\nabla^2 (n,c) \|_{\mathcal{H}^{m-1}}^2\big{)}d\tau\Big{\}}.
\end{align}
\end{Lemma}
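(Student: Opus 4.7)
The plan is to derive a parabolic equation for $v:=\Delta n$ by applying $\Delta$ to \eqref{1.1.1}, and then run essentially the same conormal energy estimate that was used for $n$ in Lemma \ref{L3.5}, with the only essentially new feature being the presence of nonhomogeneous Neumann-type boundary data on $v$. Concretely, I would first rewrite the equation as $\Delta n = n_t + u\cdot\nabla n + \nabla\cdot(n\nabla c)$ and restrict it to $\partial\Omega$: since $\partial_\nu n = 0$ on $\partial\Omega$ implies $\partial_\nu n_t = 0$, differentiating in $\nu$ yields the boundary identity
$$ \partial_\nu v\big|_{\partial\Omega} \;=\; \partial_\nu(u\cdot\nabla n)\big|_{\partial\Omega} + \partial_\nu\nabla\cdot(n\nabla c)\big|_{\partial\Omega}, $$
which is of strictly lower order than $v$ itself. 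Taking $\Delta$ of \eqref{1.1.1} then produces
$$ v_t + u\cdot\nabla v - \Delta v \;=\; [u\cdot\nabla,\Delta]n \,-\, \Delta\nabla\cdot(n\nabla c) \;=:\; \mathcal{F}. $$

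The base case $|\alpha|=0$ follows by testing against $v$: integration by parts gives $\tfrac12\tfrac{d}{dt}\|v\|^2+\|\nabla v\|^2 = \int_{\partial\Omega}\partial_\nu v\cdot v\,d\sigma + (\mathcal{F},v)$, and each piece is controlled using the trace and anisotropic Sobolev embeddings of Lemma \ref{L2.2}, Lemma \ref{L2.3}, and the $L^\infty$ quantities bundled in $M(t)$, with the $\|\nabla v\|$ that appears on the right of the trace step absorbed into the dissipation via Young's inequality. For the inductive step, assume \eqref{3.87} at orders $\leq m-2$, apply $\mathcal{Z}^\alpha$ with $|\alpha|=m-1$ to the equation for $v$, and test against $\mathcal{Z}^\alpha v$. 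Structurally the computation mirrors Lemma \ref{L3.6}: the convective term $u\cdot\nabla\mathcal{Z}^\alpha v$ is skew-symmetric up to commutators; the dissipative contribution $\int \mathcal{Z}^\alpha\Delta v\cdot\mathcal{Z}^\alpha v\,dx$ yields $-\|\nabla\mathcal{Z}^\alpha v\|^2$ plus a commutator $[\mathcal{Z}^\alpha,\Delta]v$ (controlled by the inductive hypothesis together with elliptic regularity \eqref{3.69} to trade $\nabla^2$ for $\Delta$) and a boundary integral; the forcing $\mathcal{F}$ is handled by Lemma \ref{L2.3}, with the highest-order piece $\nabla^3 c$ inside $\Delta\nabla\cdot(n\nabla c)$ distributed so that no more than $\|\nabla^2 c\|_{\mathcal{H}^{m-1}}$ or $\|\nabla c\|_{\mathcal{H}^m}$ appears on the right, as allowed by \eqref{3.87}.

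The main obstacle is precisely the boundary integral $\int_{\partial\Omega}\partial_\nu\mathcal{Z}^\alpha v\cdot\mathcal{Z}^\alpha v\,d\sigma$: because $v$ does not satisfy a homogeneous Neumann condition, this term is genuine. To handle it I would split according to whether $\mathcal{Z}^\alpha$ hits a normal vector field $Z_3 = \varphi(z)\partial_z$ (in which case the surface integral vanishes because $\varphi(0)=0$) or consists only of $\partial_t$ and tangential $Z_1,Z_2$; in the latter case one commutes $\mathcal{Z}^\alpha$ past $\partial_\nu$, applies the boundary identity above to substitute for $\partial_\nu v$, and then uses the trace part of Lemma \ref{L2.2} exactly as in \eqref{3.62}--\eqref{3.64} of Lemma \ref{L3.6} to bound the result by $\delta\,\|\nabla\Delta n\|^2_{\mathcal{H}^{m-1}} + D_\delta D_{m+2}\,(1+P(M(t)))(\|\nabla^2(n,c)\|_{\mathcal{H}^{m-1}}^2 + \text{lower order})$. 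The $\delta$-term is absorbed into the dissipation on the left-hand side of \eqref{3.87}, while the remainder fits into the right-hand side, and the claim follows after Grönwall-type bookkeeping combined with the inductive hypothesis.
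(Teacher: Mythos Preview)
Your plan differs from the paper's in one structural choice: the paper does not derive an equation for $v=\Delta n$ and test against $\mathcal{Z}^\alpha v$. Instead it applies $\mathcal{Z}^\alpha\nabla$ to \eqref{1.1.1} and pairs the result with $\nabla\mathcal{Z}^\alpha\Delta n$. After one integration by parts in the time-derivative term (using $\partial_\nu n_t=0$), this still yields $\tfrac12\tfrac{d}{dt}\|\mathcal{Z}^\alpha\Delta n\|^2$, and the dissipation $\|\nabla\mathcal{Z}^\alpha\Delta n\|^2$ comes directly from pairing $\mathcal{Z}^\alpha\nabla\Delta n$ with $\nabla\mathcal{Z}^\alpha\Delta n$ (up to a commutator). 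The advantage is twofold. First, the forcing term $\mathcal{Z}^\alpha\nabla(u\cdot\nabla n)$ contains at most one derivative of $u$, so only $\|\nabla u\|_{\mathcal{H}^{m-1}}$ shows up on the right; your commutator $[\Delta,u\cdot\nabla]n=-\Delta u\cdot\nabla n-2\nabla u:\nabla^2 n$ carries $\Delta u$, which is not uniformly controlled in $\epsilon$ at level $\mathcal{H}^{m-1}$ and forces an extra integration by parts you have not mentioned. Second, the paper's only boundary integral is $\int_{\partial\Omega}\nu\cdot\mathcal{Z}^\alpha\nabla n_t\,\mathcal{Z}^\alpha\Delta n\,d\sigma$, which is harmless because $\partial_\nu n_t=0$; your nonhomogeneous Neumann identity for $\partial_\nu v$ drags in $\partial_\nu\Delta c$ on the boundary, adding another layer of work.

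The point you handle too glibly is the $n\Delta c$ contribution. The paper explicitly flags (around \eqref{3.103} and in Remark~\ref{R1.1}) that $\mathcal{Z}^\alpha\nabla(n\Delta c)$ would naively produce $\|\nabla\Delta c\|_{\mathcal{H}^{m-1}}$, which is \emph{not} bounded by $\|\nabla^2 c\|_{\mathcal{H}^{m-1}}$ or $\|\nabla c\|_{\mathcal{H}^m}$ via Leibniz redistribution alone: one must substitute $\Delta c=c_t+u\cdot\nabla c+nc$ from \eqref{1.1.2} so that the dangerous extra derivative becomes $\partial_t$ (a conormal direction) and lands inside $\|\nabla c\|_{\mathcal{H}^m}$. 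Your phrase ``distributed so that\dots'' does not capture this, and in your scheme the same substitution is needed both in the bulk forcing and in the boundary term. Your outline is salvageable, but closing it requires exactly this equation-substitution trick plus the extra integration by parts for $\Delta u\cdot\nabla n$; the paper's choice of multiplier sidesteps both complications.
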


\begin{proof}
Applying $\nabla$ to the equation \eqref{1.1.1} gives
\begin{align}\label{3.88}
\nabla n_t+\nabla(u\cdot\nabla n)=\nabla\Delta n-\nabla(n\Delta c)-\nabla(\nabla n\cdot\nabla c).
\end{align}
By multiplying \eqref{3.88} by $\nabla\Delta n$ and integrating over $\Omega$, we obtain that
\begin{align}\label{3.89}
&-\int_{\Omega}\nabla n_t\cdot\nabla\Delta n\,dx+\int_{\Omega}|\nabla\Delta n|^2\,dx\nonumber\\
=&\int_{\Omega}\nabla(u\cdot\nabla n)\cdot\nabla\Delta n\,dx+\int_{\Omega}\nabla(n\Delta c)\cdot\nabla\Delta n\,dx\nonumber\\
&+\int_{\Omega}\nabla(\nabla n\cdot\nabla c)\cdot\nabla\Delta n\,dx.
\end{align}
By using the integration by parts and the boundary condition \eqref{1.2.1}, we get
\begin{align}\label{3.90}
-\int_{\Omega}\nabla n_t\cdot\nabla\Delta n\,dx=\frac{1}{2}\frac{d}{dt}\int_{\Omega}|\Delta n|^2\,dx.
\end{align}
In view of Young's inequality, we obtain that
\begin{align}\label{3.91}
&\Big{|}\int_0^t\int_{\Omega}\nabla(u\cdot\nabla n)\cdot\nabla\Delta n-\nabla(n\Delta c)\cdot\nabla\Delta n-\nabla(\nabla n\cdot\nabla c)\cdot\nabla\Delta n\,dxd\tau\Big{|}\nonumber\\
\leq&\,\delta\int^t_0\|\nabla\Delta n\|^2\,d\tau+D_\delta M(t)\int^t_0\big{(}\|n\|^2+\|\nabla n\|^2+\|\nabla^2n\|^2\big{)}\,d\tau,
\end{align}
where $\delta$ is a small enough constant.
Therefore, based on \eqref{3.89}-\eqref{3.91}, \eqref{3.87} holds for $m=1$.

Now, we turn to do higher order uniform estimates. Assume that \eqref{3.87} has been proved for $|\alpha|\leq m-2$, we need to show that it holds for $|\alpha|=m-1$. By applying $\mathcal{Z}^\alpha$ with $|\alpha|=m-1$ to \eqref{3.88}, we obtain that
\begin{align}\label{3.92}
\mathcal{Z}^\alpha\nabla n_t+\mathcal{Z}^\alpha\nabla(u\cdot\nabla n)=\mathcal{Z}^\alpha\nabla\Delta n-\mathcal{Z}^\alpha\nabla(n\Delta c)-\mathcal{Z}^\alpha\nabla(\nabla n\cdot\nabla c).
\end{align}
Multiplying \eqref{3.92} by $\nabla\mathcal{Z}^\alpha\Delta n$, we get
\begin{align}\label{3.93}
&-\int_{\Omega}\mathcal{Z}^\alpha\nabla n_t\cdot\nabla\mathcal{Z}^\alpha\Delta n\,dx+\int_{\Omega}\mathcal{Z}^\alpha\nabla\Delta n\cdot\nabla\mathcal{Z}^\alpha\Delta n\,dx\nonumber\\
=&\int_{\Omega}\mathcal{Z}^\alpha\nabla(u\cdot\nabla n)\cdot\nabla\mathcal{Z}^\alpha\Delta n\,dx
+\int_{\Omega}\mathcal{Z}^\alpha\nabla(n\Delta c)\cdot\nabla\mathcal{Z}^\alpha\Delta n\,dx\nonumber\\
&+\int_{\Omega}\mathcal{Z}^\alpha\nabla(\nabla n\cdot\nabla c)\cdot\nabla\mathcal{Z}^\alpha\Delta n\,dx.
\end{align}
Due to integration by parts, we have
\begin{align}
-\int_{\Omega}\mathcal{Z}^\alpha\nabla n_t\cdot\nabla\mathcal{Z}^\alpha\Delta n\,dx=&\frac{1}{2}\frac{d}{dt}\int_{\Omega}|\mathcal{Z}^\alpha\Delta n|^2\,dx-\int_{\partial\Omega}\nu\cdot\mathcal{Z}^\alpha\nabla n^\epsilon_t\mathcal{Z}^\alpha\Delta n\,d\sigma\nonumber\\
&-\int_{\Omega}[\mathcal{Z}^\alpha,\nabla\cdot]\nabla n_t\mathcal{Z}^\alpha\Delta n\,dx,\label{3.94}\\
\int_{\Omega}\mathcal{Z}^\alpha\nabla\Delta n\cdot\nabla\mathcal{Z}^\alpha\Delta n\,dx=&\int_{\Omega}|\nabla\mathcal{Z}^\alpha\Delta n|^2\,dx+\int_{\Omega}[\mathcal{Z}^\alpha,\nabla]\Delta n\cdot\nabla\mathcal{Z}^\alpha\Delta n\,dx.\label{3.95}
\end{align}
Hence, from \eqref{3.94} and \eqref{3.95}, we get
\begin{align}\label{3.96}
&\frac{1}{2}\frac{d}{dt}\int_{\Omega}|\mathcal{Z}^\alpha\Delta n|^2\,dx+\int_{\Omega}|\nabla\mathcal{Z}^\alpha\Delta n|^2\,dx\nonumber\\
=&\int_{\Omega}\mathcal{Z}^\alpha\nabla(u\cdot\nabla n)\cdot\nabla\mathcal{Z}^\alpha\Delta n\,dx+\int_{\Omega}\mathcal{Z}^\alpha\nabla(n\Delta c)\cdot\nabla\mathcal{Z}^\alpha\Delta n\,dx\nonumber\\
&+\int_{\partial\Omega}\nu\cdot\mathcal{Z}^\alpha\nabla n_t\mathcal{Z}^\alpha\Delta n\,d\sigma+\int_{\Omega}[\mathcal{Z}^\alpha,\nabla\cdot]\nabla n_t\mathcal{Z}^\alpha\Delta n\,dx\nonumber\\
&-\int_{\Omega}[\mathcal{Z}^\alpha,\nabla]\Delta n\cdot\nabla\mathcal{Z}^\alpha\Delta n\,dx+\int_{\Omega}\mathcal{Z}^\alpha\nabla(\nabla n\cdot\nabla c)\cdot\nabla\mathcal{Z}^\alpha\Delta n\,dx.
\end{align}

First, we deal with the boundary term on the right-hand side of \eqref{3.96}. Note that when $|\alpha_0|=m-1$, this integral vanishes. Hence, we assume $|\alpha_0|\leq m-2$. It is easy to deduce that
\begin{align}
\big{|}\int^t_0\int_{\partial\Omega}\nu\cdot\mathcal{Z}^\alpha\nabla n_t\mathcal{Z}^\alpha\Delta n\,d\sigma d\tau\big{|}\leq\int^t_0|\nu\cdot\mathcal{Z}^\alpha\nabla n_t|_{L^2(\partial\Omega)}|\mathcal{Z}^\alpha\Delta n|_{L^2(\partial\Omega)}\,d\tau.
\end{align}
Based on Lemma \ref{L2.2} and the boundary condition \eqref{1.2.1}, we get
\begin{align}
|\nu\cdot\mathcal{Z}^\alpha\nabla n_t|_{L^2(\partial\Omega)}\leq&\, D_m(\|\nabla^2 n\|^\frac{1}{2}_{\mathcal{H}^{m-1}}+\|\nabla n\|_{\mathcal{H}^{m-1}}^\frac{1}{2})\|\nabla n\|^\frac{1}{2}_{\mathcal{H}^{m-1}}\nonumber,\\
|\mathcal{Z}^\alpha\Delta n|_{L^2(\partial\Omega)}\leq&\, D_{m}(\|\nabla \Delta n\|^\frac{1}{2}_{\mathcal{H}^{m-1}}+\|\Delta n\|_{\mathcal{H}^{m-1}}^\frac{1}{2})\|\Delta n\|^\frac{1}{2}_{\mathcal{H}^{m-1}}\nonumber.
\end{align}
Therefore, in view of Young's inequality, we have
\begin{align}\label{3.98}
\big{|}\int^t_0\int_{\partial\Omega}\nu\cdot\mathcal{Z}^\alpha\nabla n_t\mathcal{Z}^\alpha\Delta n\,d\sigma d\tau\big{|}\leq&\,\delta\int^t_0\|\nabla \Delta n\|^2_{\mathcal{H}^{m-1}}\,d\tau+D_\delta D_{m}\int^t_0\big{(}\|\nabla^2 n\|^2_{\mathcal{H}^{m-1}}\nonumber\\
&+\|\nabla n\|^2_{\mathcal{H}^{m-1}}\big{)}\,d\tau.
\end{align}

Next, we can use Young's inequality to get the following estimate directly,
\begin{align}\label{3.99}
&\Big{|}\int^t_0\int_{\Omega}[\mathcal{Z}^\alpha,\nabla]\Delta n\cdot\nabla\mathcal{Z}^\alpha\Delta n\,dxd\tau\Big{|}\nonumber\\
\leq&\,\delta\int^t_0\|\nabla \Delta n\|^2_{\mathcal{H}^{m-1}}\,d\tau
+D_\delta\int^t_0\|\nabla\Delta n\|^2_{\mathcal{H}^{m-2}}\,d\tau.
\end{align}
Also, it is easy to deduce that
\begin{align}\label{3.100}
\Big{|}\int^t_0\int_{\Omega}[\mathcal{Z}^\alpha,\nabla\cdot]\nabla n_t\mathcal{Z}^\alpha\Delta n\,dxd\tau\Big{|}\leq D\int^t_0\|\nabla^2 n\|^2_{\mathcal{H}^{m-1}}\,d\tau.
\end{align}

Furthermore, by virtue of Lemma \ref{L2.3} and Young's inequality, we obtain that
\begin{align}
&\Big{|}\int^t_0\int_{\Omega}\mathcal{Z}^\alpha\nabla(\nabla n\cdot\nabla c)\cdot\nabla\mathcal{Z}^\alpha\Delta n\,dxd\tau\Big{|}\nonumber\\
\leq&\,\delta\int^t_0\|\nabla \Delta n\|^2_{\mathcal{H}^{m-1}}\,d\tau+D_\delta P(M(t))\int^t_0\big{(}\|\nabla^2 n\|^2_{\mathcal{H}^{m-1}}+\|\nabla n\|^2_{\mathcal{H}^{m-1}}\nonumber\\
&+\|\nabla^2 c\|^2_{\mathcal{H}^{m-1}}+\|\nabla c\|^2_{\mathcal{H}^{m-1}}\big{)}\,d\tau,\label{3.101}\\
&\Big{|}\int^t_0\int_{\Omega}\mathcal{Z}^\alpha\nabla(u\cdot\nabla n)\cdot\nabla\mathcal{Z}^\alpha\Delta n\,dxd\tau\Big{|}\nonumber\\
\leq&\,\delta\int^t_0\|\nabla \Delta n\|^2_{\mathcal{H}^{m-1}}\,d\tau+D_\delta P(M(t))\int^t_0\big{(}\|\nabla n\|^2_{\mathcal{H}^{m-1}}+\|\nabla^2 n\|^2_{\mathcal{H}^{m-1}}\nonumber\\
&+\|\nabla u\|^2_{\mathcal{H}^{m-1}}+\|u\|^2_{\mathcal{H}^{m-1}}\big{)}\,d\tau.\label{3.102}
\end{align}

Finally, because we don't expect that $\|\nabla\Delta c\|^2_{\mathcal{H}^{m-1}}$ appear in the right-hand side of energy inequality, we deal with $\int_{\Omega}\mathcal{Z}^\alpha\nabla(n\Delta c)\cdot\nabla\mathcal{Z}^\alpha\Delta n\,dx$ with the help of the equation \eqref{1.1.2} and Lemma \ref{L2.3}. We have
\begin{align}
&\Big{|}\int^t_0\int_{\Omega}\mathcal{Z}^\alpha\nabla(n\Delta c)\cdot\nabla\mathcal{Z}^\alpha\Delta n\,dxd\tau\Big{|}\nonumber\\
\leq&\,\delta\int^t_0\|\nabla \Delta n\|^2_{\mathcal{H}^{m-1}}\,d\tau+D_\delta P(M(t))\int^t_0\big{(}\| (n,c,u)\|^2_{\mathcal{H}^{m-1}}\nonumber\\
&+\|\nabla( n,u)\|^2_{\mathcal{H}^{m-1}}+\|\nabla^2 c\|^2_{\mathcal{H}^{m-1}}+\|\nabla c\|^2_{\mathcal{H}^{m}}\big{)}\,d\tau.\label{3.103}
\end{align}
Consequently, the combination of \eqref{3.69}, \eqref{3.98}-\eqref{3.103} and the inductive assumption yield \eqref{3.87}. Therefore, we complete the proof of Lemma \ref{L3.8}.
\end{proof}

Now, we turn to the estimate $\|\Delta c\|_{\mathcal{H}^{m-1}}$. We have
\begin{Lemma}\label{L3.9} For every $m\geq 1$, a smooth solution of the problem \eqref{1.1.1}-\eqref{1.2} satisfies the estimate
\begin{align}\label{3.104}
&\sup_{0\leq\tau\leq t}\|\Delta c\|^2_{\mathcal{H}^{m-1}}+\int_0^t\|\nabla\Delta c\|^2_{\mathcal{H}^{m-1}}d\tau\nonumber\\
\leq&\,D_{m+2}\,\Big{\{}\|\Delta c_0\|^2_{\mathcal{H}^{m-1}}+\big{(}1+P(M(t))\big{)}\int_0^t\big{(}\|(n,c,u)\|_{\mathcal{H}^{m-1}}^2\nonumber\\
&+\|\nabla(n,c,u)\|_{\mathcal{H}^{m-1}}^2
+\|\nabla^2 c \|_{\mathcal{H}^{m-1}}^2\big{)}d\tau\Big{\}}.
\end{align}
\end{Lemma}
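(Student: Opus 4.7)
\smallskip
\noindent\textbf{Proof proposal for Lemma \ref{L3.9}.}
The plan is to mirror closely the argument of Lemma \ref{L3.8}, exploiting the fact that the right-hand side of \eqref{1.1.2} is simpler than that of \eqref{1.1.1}: there is only the reaction term $-nc$ in place of the chemotaxis term $\nabla\cdot(n\nabla c)$, which actually makes this estimate easier because no ``bad'' term of the form $\nabla \Delta c$ or $\nabla \Delta n$ appears on the right-hand side to worry about.

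\smallskip

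\noindent\textbf{Step 1 (Base case $m=1$).} I would apply $\nabla$ to \eqref{1.1.2} to get
\begin{equation}\nonumber
\nabla c_t + \nabla(u\cdot\nabla c) = \nabla \Delta c - \nabla(nc),
\end{equation}
test against $\nabla \Delta c$, and use \eqref{1.2.1} to see that the boundary term coming from $-\int \nabla c_t\cdot\nabla \Delta c\,dx$ vanishes and that this integral equals $\tfrac12\frac{d}{dt}\|\Delta c\|^2$. The reaction and transport terms on the right are then controlled by $\delta\|\nabla \Delta c\|^2$ plus $D_\delta M(t)(\|c\|^2+\|\nabla c\|^2+\|\nabla^2 c\|^2+\|n\|^2+\|\nabla n\|^2+\|u\|^2+\|\nabla u\|^2)$ via Young's inequality.

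\smallskip

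\noindent\textbf{Step 2 (Inductive step).} Assuming the bound for $|\alpha|\le m-2$, I apply $\mathcal{Z}^\alpha \nabla$ with $|\alpha|=m-1$ to \eqref{1.1.2}, test against $\nabla \mathcal{Z}^\alpha \Delta c$, and after integrating by parts I obtain an identity of the shape
\begin{align}\nonumber
\tfrac12\tfrac{d}{dt}\|\mathcal{Z}^\alpha \Delta c\|^2 + \|\nabla\mathcal{Z}^\alpha \Delta c\|^2
= \,&\int_{\Omega} \mathcal{Z}^\alpha\nabla(u\cdot\nabla c)\cdot\nabla\mathcal{Z}^\alpha\Delta c\,dx + \int_{\Omega}\mathcal{Z}^\alpha\nabla(nc)\cdot\nabla\mathcal{Z}^\alpha\Delta c\,dx\\\nonumber
&+\int_{\partial\Omega}\nu\cdot\mathcal{Z}^\alpha\nabla c_t\,\mathcal{Z}^\alpha\Delta c\,d\sigma + \{\text{commutators}\},
\end{align}
exactly as in \eqref{3.96}. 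The two commutator volume terms and the transport term are handled verbatim as in the estimates \eqref{3.99}--\eqref{3.102} of Lemma \ref{L3.8}, absorbing $\delta\|\nabla\Delta c\|^2_{\mathcal{H}^{m-1}}$ on the left and controlling the rest by $P(M(t))\bigl(\|\nabla^2 c\|^2_{\mathcal{H}^{m-1}}+\|\nabla c\|^2_{\mathcal{H}^{m-1}}+\|u\|^2_{\mathcal{H}^{m-1}}+\|\nabla u\|^2_{\mathcal{H}^{m-1}}\bigr)$. The reaction term $\int\mathcal{Z}^\alpha\nabla(nc)\cdot\nabla\mathcal{Z}^\alpha\Delta c\,dx$ is treated with Lemma \ref{L2.3} and Young's inequality, again yielding a $\delta$ piece plus a polynomial in $M(t)$ times $\|(n,c)\|^2_{\mathcal{H}^{m-1}}+\|\nabla(n,c)\|^2_{\mathcal{H}^{m-1}}$.

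\smallskip

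\noindent\textbf{Step 3 (Boundary term) — main obstacle.} The only delicate piece is
\begin{equation}\nonumber
\Big|\int_0^t\!\!\int_{\partial\Omega}\nu\cdot\mathcal{Z}^\alpha\nabla c_t\,\mathcal{Z}^\alpha\Delta c\,d\sigma\,d\tau\Big|,
\end{equation}
which we only need to bound when $|\alpha_0|\le m-2$ (otherwise all normal derivatives are absent and the integrand vanishes by \eqref{1.2.1}). The idea, copied from \eqref{3.98}, is to use the anisotropic trace estimate of Lemma \ref{L2.2} twice:
\begin{align}\nonumber
|\nu\cdot\mathcal{Z}^\alpha\nabla c_t|_{L^2(\partial\Omega)} &\le D_m\bigl(\|\nabla^2 c\|^{1/2}_{\mathcal{H}^{m-1}}+\|\nabla c\|^{1/2}_{\mathcal{H}^{m-1}}\bigr)\|\nabla c\|^{1/2}_{\mathcal{H}^{m-1}},\\\nonumber
|\mathcal{Z}^\alpha\Delta c|_{L^2(\partial\Omega)} &\le D_m\bigl(\|\nabla\Delta c\|^{1/2}_{\mathcal{H}^{m-1}}+\|\Delta c\|^{1/2}_{\mathcal{H}^{m-1}}\bigr)\|\Delta c\|^{1/2}_{\mathcal{H}^{m-1}}.
\end{align}
Young's inequality then produces a $\delta\int_0^t\|\nabla\Delta c\|^2_{\mathcal{H}^{m-1}}\,d\tau$ piece that is absorbed on the left, plus lower-order terms already present in the statement. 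I expect this to be the main technical obstacle since one must also express $\nabla c_t$ through \eqref{1.1.2} to generate the correct spatial norms, but no $\|\nabla\Delta c\|_{\mathcal{H}^{m-1}}^2$ factor survives on the right because the $c$-equation never produces $\Delta n$ as a forcing, unlike Lemma \ref{L3.8}.

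\smallskip

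\noindent\textbf{Step 4 (Conclusion).} After choosing $\delta$ small enough to absorb all the dissipation-type contributions into the left-hand side, combining with the elliptic regularity bound $\|\nabla^2 c\|^2_{\mathcal{H}^{m-1}}\le D_{m+2}(\|\nabla c\|^2_{\mathcal{H}^{m-1}}+\|\Delta c\|^2_{\mathcal{H}^{m-1}})$ (from the Neumann problem, as in \eqref{3.69}), and invoking the inductive hypothesis, I obtain exactly \eqref{3.104}.
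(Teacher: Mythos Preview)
Your proposal is correct and follows essentially the same route as the paper: apply $\nabla$ to \eqref{1.1.2}, test against $\nabla\Delta c$ for the base case, then apply $\mathcal{Z}^\alpha$ with $|\alpha|=m-1$ and test against $\nabla\mathcal{Z}^\alpha\Delta c$ for the inductive step, handling the boundary term via the trace estimate of Lemma~\ref{L2.2} and the commutators and nonlinear terms exactly as in Lemma~\ref{L3.8}. One small remark: your anticipated obstacle in Step~3 (``one must also express $\nabla c_t$ through \eqref{1.1.2} to generate the correct spatial norms'') is unnecessary, since $\partial_t$ is itself one of the conormal derivatives in the $\mathcal{H}^{m-1}$ framework, so $\|\nabla c_t\|_{\mathcal{H}^{m-2}}\le\|\nabla c\|_{\mathcal{H}^{m-1}}$ directly without invoking the equation; the paper proceeds exactly this way.
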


\begin{proof}
Applying $\nabla$ to the equation \eqref{1.1.2} gives
\begin{align}\label{3.105}
\nabla c_t+\nabla(u\cdot\nabla c)=\nabla\Delta c-\nabla(nc).
\end{align}
By multiplying \eqref{3.105} by $\nabla\Delta c$ and integrating over $\Omega$, we get
\begin{align}\label{3.106}
-\int_{\Omega}\nabla c_t\cdot\nabla\Delta c\,dx+\int_{\Omega}|\nabla\Delta c|^2\,dx=\int_{\Omega}\nabla(u\cdot\nabla c)\cdot\nabla\Delta c\,dx
+\int_{\Omega}\nabla(nc)\cdot\nabla\Delta c\,dx.
\end{align}
Integrating by parts and using the boundary condition \eqref{1.2.1} yield
\begin{align}\label{3.107}
-\int_{\Omega}\nabla c_t\cdot\nabla\Delta c\,dx=\frac{1}{2}\frac{d}{dt}\int_{\Omega}|\Delta c|^2\,dx.
\end{align}
By virtue of Young's inequality, we get
\begin{align}\label{3.108}
&\Big{|}\int_0^t\int_{\Omega}\nabla(u\cdot\nabla c)\cdot\nabla\Delta c-\nabla(nc)\cdot\nabla\Delta c\,dxd\tau\Big{|}\nonumber\\
\leq&\,\delta\int^t_0\|\nabla\Delta c\|^2\,d\tau+D_\delta M(t)\int^t_0(\|c\|^2+\|\nabla c\|^2+\|\nabla^2c\|^2)\,d\tau.
\end{align}
Therefore, the combination of \eqref{3.106}-\eqref{3.108} implies that \eqref{3.104} holds for $m=1$.

Now, we show that \eqref{3.104} holds for $|\alpha|\leq m-1$ . Assume that \eqref{3.104} is proved for $|\alpha|\leq m-2$, we need to prove it for $|\alpha|=m-1$. Applying $\mathcal{Z}^\alpha$ with $|\alpha|=m-1$ to \eqref{3.105} gives
\begin{align}\label{3.109}
\mathcal{Z}^\alpha\nabla c_t+\mathcal{Z}^\alpha\nabla(u\cdot\nabla c)=\mathcal{Z}^\alpha\nabla\Delta c-\mathcal{Z}^\alpha\nabla(nc).
\end{align}
Multiplying \eqref{3.109} by $\nabla\mathcal{Z}^\alpha\Delta c$, we obtain that
\begin{align}\label{3.110}
&-\int_{\Omega}\mathcal{Z}^\alpha\nabla c_t\cdot\nabla\mathcal{Z}^\alpha\Delta c\,dx+\int_{\Omega}\mathcal{Z}^\alpha\nabla\Delta c\cdot\nabla\mathcal{Z}^\alpha\Delta c\,dx\nonumber\\
=&\int_{\Omega}\mathcal{Z}^\alpha\nabla(u\cdot\nabla c)\cdot\nabla\mathcal{Z}^\alpha\Delta c\,dx
+\int_{\Omega}\mathcal{Z}^\alpha\nabla(nc)\cdot\nabla\mathcal{Z}^\alpha\Delta c\,dx.
\end{align}
By integrating by parts, we have
\begin{align}
-\int_{\Omega}\mathcal{Z}^\alpha\nabla c_t\cdot\nabla\mathcal{Z}^\alpha\Delta c\,dx=&\frac{1}{2}\frac{d}{dt}\int_{\Omega}|\mathcal{Z}^\alpha\Delta c|^2\,dx-\int_{\partial\Omega}\nu\cdot\mathcal{Z}^\alpha\nabla c^\epsilon_t\mathcal{Z}^\alpha\Delta c\,d\sigma\nonumber\\
&-\int_{\Omega}[\mathcal{Z}^\alpha,\nabla\cdot]\nabla c_t\mathcal{Z}^\alpha\Delta c\,dx,\label{3.111}\\
\int_{\Omega}\mathcal{Z}^\alpha\nabla\Delta c\cdot\nabla\mathcal{Z}^\alpha\Delta c\,dx=&\int_{\Omega}|\nabla\mathcal{Z}^\alpha\Delta c|^2\,dx+\int_{\Omega}[\mathcal{Z}^\alpha,\nabla]\Delta c\cdot\nabla\mathcal{Z}^\alpha\Delta c\,dx.\label{3.112}
\end{align}
From \eqref{3.111} and \eqref{3.112}, we get
\begin{align}\label{3.113}
&\frac{1}{2}\frac{d}{dt}\int_{\Omega}|\mathcal{Z}^\alpha\Delta c|^2\,dx+\int_{\Omega}|\nabla\mathcal{Z}^\alpha\Delta c|^2\,dx\nonumber\\
=&\int_{\Omega}\mathcal{Z}^\alpha\nabla(u\cdot\nabla c)\cdot\nabla\mathcal{Z}^\alpha\Delta c\,dx+\int_{\Omega}\mathcal{Z}^\alpha\nabla(n c)\cdot\nabla\mathcal{Z}^\alpha\Delta c\,dx\nonumber\\
&+\int_{\partial\Omega}\nu\cdot\mathcal{Z}^\alpha\nabla c_t\mathcal{Z}^\alpha\Delta c\,d\sigma+\int_{\Omega}[\mathcal{Z}^\alpha,\nabla\cdot]\nabla c_t\mathcal{Z}^\alpha\Delta c\,dx\nonumber\\
&-\int_{\Omega}[\mathcal{Z}^\alpha,\nabla]\Delta c\cdot\nabla\mathcal{Z}^\alpha\Delta c\,dx.
\end{align}

First, we estimate the boundary term in the right-hand side of \eqref{3.113}. Note that when $|\alpha_0|=m-1$, this integral vanishes. Hence, we assume $|\alpha_0|\leq m-2$. It is easy to deduce that
\begin{align}
\Big{|}\int^t_0\int_{\partial\Omega}\nu\cdot\mathcal{Z}^\alpha\nabla c_t\mathcal{Z}^\alpha\Delta c\,d\sigma d\tau\Big{|}\leq\int^t_0|\nu\cdot\mathcal{Z}^\alpha\nabla c_t|_{L^2(\partial\Omega)}|\mathcal{Z}^\alpha\Delta c|_{L^2(\partial\Omega)}\,d\tau.\nonumber
\end{align}
Similar to \eqref{3.98}, by virtue of Lemma \ref{L2.2}, the boundary condition \eqref{1.2.1} and Young's inequality,  we get
\begin{align}\label{3.115}
&\Big{|}\int^t_0\int_{\partial\Omega}\nu\cdot\mathcal{Z}^\alpha\nabla c_t\mathcal{Z}^\alpha\Delta c\,d\sigma d\tau\Big{|}\nonumber\\
\leq&\,\delta\int^t_0\|\nabla \Delta c\|^2_{\mathcal{H}^{m-1}}\,d\tau
+D_\delta D_{m}\int^t_0(\|\nabla^2 c\|^2_{\mathcal{H}^{m-1}}+\|\nabla c\|^2_{\mathcal{H}^{m-1}})\,d\tau.
\end{align}

Next, by using Young's inequality, we can easy obtain
\begin{align}\label{3.116}
&\Big{|}\int^t_0\int_{\Omega}[\mathcal{Z}^\alpha,\nabla]\Delta c\cdot\nabla\mathcal{Z}^\alpha\Delta c\,dxd\tau\Big{|}\nonumber\\
\leq\,&\delta\int^t_0\|\nabla \Delta c\|^2_{\mathcal{H}^{m-1}}\,d\tau
+D_\delta\int^t_0\|\nabla\Delta c\|^2_{\mathcal{H}^{m-2}}\,d\tau.
\end{align}
Also, it is easy to deduce that
\begin{align}\label{3.117}
\Big{|}\int^t_0\int_{\Omega}[\mathcal{Z}^\alpha,\nabla\cdot]\nabla c_t\mathcal{Z}^\alpha\Delta c\,dxd\tau\Big{|}\leq D\int^t_0\|\nabla^2 c\|^2_{\mathcal{H}^{m-1}}\,d\tau.
\end{align}

Finally, by virtue of Lemma \ref{L2.3} and Young's inequality, we obtain that
\begin{align}
&\Big{|}\int^t_0\int_{\Omega}\mathcal{Z}^\alpha\nabla( nc)\cdot\nabla\mathcal{Z}^\alpha\Delta c\,dxd\tau\Big{|}\nonumber\\
\leq&\,\delta\int^t_0\|\nabla \Delta c\|^2_{\mathcal{H}^{m-1}}\,d\tau+D_\delta P(M(t))\int^t_0\big{(}\|n\|^2_{\mathcal{H}^{m-1}}\nonumber\\
&+\|\nabla n\|^2_{\mathcal{H}^{m-1}}+\|c\|^2_{\mathcal{H}^{m-1}}+\|\nabla c\|^2_{\mathcal{H}^{m-1}}\big{)}\,d\tau,\label{3.118}\\
&\Big{|}\int^t_0\int_{\Omega}\mathcal{Z}^\alpha\nabla(u\cdot\nabla c)\cdot\nabla\mathcal{Z}^\alpha\Delta c\,dxd\tau\Big{|}\nonumber\\
\leq&\,\delta\int^t_0\|\nabla \Delta c\|^2_{\mathcal{H}^{m-1}}\,d\tau+D_\delta P(M(t))\int^t_0\big{(}\|\nabla c\|^2_{\mathcal{H}^{m-1}}\nonumber\\
&+\|\nabla^2 c\|^2_{\mathcal{H}^{m-1}}+\|\nabla u\|^2_{\mathcal{H}^{m-1}}+\|u\|^2_{\mathcal{H}^{m-1}}\big{)}\,d\tau.\label{3.119}
\end{align}

Consequently, based on \eqref{3.69}, \eqref{3.115}-\eqref{3.119} and the inductive assumption, we can complete the proof of Lemma \ref{L3.9}.
\end{proof}

It follows from Lemma \ref{L3.4} to Lemma \ref{L3.9}, where $\delta>0$ is suitably small, that
\begin{align}\label{3.119.1}
&\sup_{0\leq\tau\leq t}(\|(n,c,u)\|^2_{\mathcal{H}^m}+\|\nabla n\|^2_{\mathcal{H}^{m-1}}+\|\nabla c\|^2_{\mathcal{H}^{m}}+\|\Delta(n,c)\|^2_{\mathcal{H}^{m-1}})
\nonumber\\
&+\epsilon\int_0^t\|\nabla u\|^2_{\mathcal{H}^{m}}\,d\tau+\int_0^t(\|\nabla n\|^2_{\mathcal{H}^{m}}+\|\Delta c\|^2_{\mathcal{H}^{m}}+\|\nabla\Delta(n,c)\|^2_{\mathcal{H}^{m-1}})\,d\tau\nonumber\\
\leq&\,D_{m+2}\,\Big{\{}\|(n_0,c_0,u_0)\|^2_{\mathcal{H}^m}+\|\nabla n_0\|^2_{\mathcal{H}^{m-1}}+\|\nabla c_0\|^2_{\mathcal{H}^{m}}+\|\Delta (n_0,c_0)\|^2_{\mathcal{H}^{m-1}}\nonumber\\
&+\int_0^t\big{(}\|\nabla^2p_1\|_{\mathcal{H}^{m-1}}\|u\|_{\mathcal{H}^{m}}
+\epsilon^{-1}(\|\nabla p_2\|_{\mathcal{H}^{m-1}}^2+\|p_2\|_{\mathcal{H}^{m-1}}^2)\big{)}d\tau\nonumber\\
&+\big{(}1+P(M(t))\big{)}\int_0^t\big{(}\|(n,c,u)\|_{\mathcal{H}^m}^2+\|\nabla(n,u)\|_{\mathcal{H}^{m-1}}^2+\|\nabla c\|_{\mathcal{H}^{m}}^2\nonumber\\
&+\|\Delta(n,c)\|_{\mathcal{H}^{m-1}}^2\big{)}d\tau+\int_0^t\|n\nabla\phi\|_{\mathcal{H}^{m}}^2d\tau\Big{\}}.
\end{align}

\subsection{Normal Derivative Estimates}
In this subsection, we provide the estimate for $\|\nabla u\|_{\mathcal{H}^{m-1}}$. Note that
$$\|\chi\partial_{y^i}u\|_{\mathcal{H}^{m-1}}\leq D\,\|u\|_{\mathcal{H}^{m}}\quad \text{for}\quad i=1,2,$$
it suffices to estimate $\|\chi\partial_\nu u\|_{\mathcal{H}^{m-1}}$, where $\chi$ is compactly supported in one of the $\Omega_i$ and with value one in a vicinity of the boundary. We shall thus use the local coordinates \eqref{1.18}.
Due to \eqref{3.17}, we immediately obtain that
\begin{align}\label{3.120}
\|\chi\partial_\nu u\cdot \nu\|_{\mathcal{H}^{m-1}}\leq D_m\,\|u\|_{\mathcal{H}^{m}}.
\end{align}
Thus, it remains to estimate $\|\chi\Pi\partial_\nu u\|_{\mathcal{H}^{m-1}}$. We define
\begin{align*}
\eta:=\chi\Pi((\nabla u+(\nabla u)^t)\nu)+2\zeta\chi\Pi u.
\end{align*}
In view of the boundary condition \eqref{1.2}, we have
\begin{align*}
\eta=0\quad \text{on}\quad\partial\Omega.
\end{align*}
Moreover, since $\eta$ have another form in the vicinity of the boundary $\partial\Omega$:
\begin{align}
\eta=\chi\Pi\partial_\nu u+\chi\Pi(\nabla(u\cdot \nu)-\nabla \nu\cdot u-u\times(\nabla\times \nu)+2\zeta u)\label{3.121},
\end{align}
we easily get that
\begin{align*}
\|\chi\Pi\partial_\nu u\|_{\mathcal{H}^{m-1}}\leq&\, D_{m+1}\,(\|\eta\|_{\mathcal{H}^{m-1}}+\|u\|_{\mathcal{H}^{m}}+\|\partial_\nu u\cdot \nu\|_{\mathcal{H}^{m}})\nonumber\\
\leq&\, D_{m+1}\,(\|\eta\|_{\mathcal{H}^{m-1}}+\|u\|_{\mathcal{H}^{m}}).
\end{align*}
Hence, it suffices to estimate $\|\eta\|_{\mathcal{H}^{m-1}}$. We have the following estimates for $\eta$.
\begin{Lemma}\label{L3.10} For every $m\geq1$, we have
\begin{align}\label{3.122}
&\sup_{0\leq\tau\leq t}\|\eta\|^2_{\mathcal{H}^{m-1}}+\epsilon\int^t_0\|\nabla\eta\|^2_{\mathcal{H}^{m-1}}d\tau\nonumber\\
\leq&\, D_{m+2}\Big{\{}\|u_0\|^2_{\mathcal{H}^{m}}+\|\nabla u_0\|^2_{\mathcal{H}^{m-1}}+\epsilon^{-1}\int_0^t\|\nabla p_2\|^2_{\mathcal{H}^{m-1}}d\tau+\epsilon^2\int^t_0\|\nabla u\|_{\mathcal{H}^{m}}^2\,d\tau\nonumber\\
&+\int_0^t(\|\nabla^2p_1\|_{\mathcal{H}^{m-1}}+\|\nabla p_1\|_{\mathcal{H}^{m-1}})\|\eta\|_{\mathcal{H}^{m-1}}d\tau+\int^t_0\|\nabla^2 p_1\|_{\mathcal{H}^{m-1}}\|u\|_{\mathcal{H}^{m}}d\tau\nonumber\\
&+\int^t_0\|\nabla (n\nabla\phi)\|_{\mathcal{H}^{m-1}}\|\eta\|_{\mathcal{H}^{m-1}}d\tau+\int^t_0\|n\nabla\phi\|_{\mathcal{H}^{m-1}}\|u\|_{\mathcal{H}^{m}}d\tau
\nonumber\\
&+\big{(}1+P(M(t))\big{)}\int^t_0(\|u\|_{\mathcal{H}^{m}}^2+\|\nabla u\|^2_{\mathcal{H}^{m-1}}
)\,d\tau\Big{\}}.
\end{align}
\end{Lemma}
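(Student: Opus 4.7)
The plan is to derive a convection--diffusion equation satisfied by $\eta$ and perform conormal energy estimates directly; because $\eta$ vanishes on $\partial\Omega$, no boundary integrals will arise when integrating by parts, and this is precisely why $\eta$ (rather than $\Pi\partial_\nu u$) is the right quantity. Writing the momentum equation \eqref{1.1.3} as $u_t+u\cdot\nabla u-\epsilon\Delta u = -\nabla p - n\nabla\phi$ and applying $\chi\Pi[(\nabla\cdot+\nabla^t)\nu]+2\zeta\chi\Pi(\cdot)$ to both sides, one checks (as in \cite{MR}) that $\eta$ solves a system of the form
\begin{equation*}
\partial_t \eta + u\cdot\nabla\eta - \epsilon\Delta\eta = F(u,\nabla u) - \chi\Pi\bigl(\nabla^2 p\cdot\nu+(\nabla^2 p)^t\nu\bigr) - \chi\Pi\bigl(\nabla(n\nabla\phi)\cdot\nu+\cdots\bigr) - 2\zeta\chi\Pi\nabla p - 2\zeta\chi\Pi(n\nabla\phi)+\mathcal{R},
\end{equation*}
where $F$ gathers lower-order first- and zeroth-order terms in $u$ (coming from commuting the projector and $\chi$ through $\partial_t$, $u\cdot\nabla$ and $\Delta$, and from differentiating $\nu$), and $\mathcal{R}$ collects $O(\epsilon)$ terms involving $\nabla u$ and $\nabla^2 u$ produced when $\epsilon\Delta$ is moved past $\chi\Pi((\nabla\cdot+\nabla^t)\nu)$. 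The key point is that the \emph{principal} pressure contribution is $-\chi\Pi\nabla^2 p \cdot \nu$, which is one derivative in $p$ worse than in the energy estimate of Lemma \ref{L3.4}, matching exactly the $\|\nabla^2 p_1\|_{\mathcal{H}^{m-1}}$ and $\epsilon^{-1}\|\nabla p_2\|_{\mathcal{H}^{m-1}}^2$ terms on the right-hand side of \eqref{3.122}.

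Next, I apply $\mathcal{Z}^\alpha$ with $|\alpha|\le m-1$ to this equation, multiply by $\mathcal{Z}^\alpha\eta$, and integrate over $\Omega$. Since $\eta=0$ on $\partial\Omega$ and the commutator $[\mathcal{Z}^\alpha,u\cdot\nabla]$ together with $[\mathcal{Z}^\alpha,\Delta]$ are tangential to the boundary in the relevant sense, the integrations by parts produce no boundary traces on $\eta$, and the diffusion term yields $\epsilon\|\nabla \mathcal{Z}^\alpha\eta\|^2$ up to a controlled commutator. The convection commutator and the $F(u,\nabla u)$ terms are handled by Lemma \ref{L2.3} and the $L^\infty$-bound $M(t)$, producing the last line of \eqref{3.122}. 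The $\epsilon$-terms in $\mathcal{R}$ produce $\epsilon^2\|\nabla u\|_{\mathcal{H}^m}^2$ after using $\epsilon \int \mathcal{Z}^\alpha(\nabla^2 u)\cdot\mathcal{Z}^\alpha\eta \le \delta\epsilon\|\nabla\eta\|_{\mathcal{H}^{m-1}}^2 + D_\delta\epsilon \|\nabla^2 u\|_{\mathcal{H}^{m-1}}^2$ and absorbing via the relation between $\|\nabla^2 u\|_{\mathcal{H}^{m-1}}$ and $\|\nabla u\|_{\mathcal{H}^m}$.

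The pressure splitting $p=p_1+p_2$ is then exploited as follows. For $p_1$ (Euler part, \eqref{3.8.11}), the contribution pairs naturally with $\mathcal{Z}^\alpha \eta$ through Cauchy--Schwarz as $\|\nabla^2 p_1\|_{\mathcal{H}^{m-1}}\|\eta\|_{\mathcal{H}^{m-1}}$, which is integrable in time; the extra $\|\nabla^2 p_1\|_{\mathcal{H}^{m-1}}\|u\|_{\mathcal{H}^m}$ term comes from the commutator $[\mathcal{Z}^\alpha,\chi\Pi\nu]\nabla p_1$ whose principal part costs one tangential derivative but can be paired with $u$ instead of $\eta$. For $p_2$ (Stokes part, \eqref{3.9}), which is harmonic and has only $\epsilon\Delta u\cdot\nu$ as boundary data, one integrates by parts to move one derivative onto $\mathcal{Z}^\alpha \eta$, obtaining $\|\nabla p_2\|_{\mathcal{H}^{m-1}}\|\nabla\eta\|_{\mathcal{H}^{m-1}} \le \delta\epsilon\|\nabla\eta\|_{\mathcal{H}^{m-1}}^2 + D_\delta\epsilon^{-1}\|\nabla p_2\|_{\mathcal{H}^{m-1}}^2$, which is then absorbed into the dissipation. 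The source $-n\nabla\phi$ contributes the two terms $\int \|\nabla(n\nabla\phi)\|_{\mathcal{H}^{m-1}}\|\eta\|_{\mathcal{H}^{m-1}}$ and $\int\|n\nabla\phi\|_{\mathcal{H}^{m-1}}\|u\|_{\mathcal{H}^m}$ in exactly the same way as the pressure.

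The main obstacle will be making the algebra of the equation for $\eta$ precise enough to identify exactly which commutator terms are harmless (lower-order in $u$, absorbed by $M(t)$ and $\|\nabla u\|_{\mathcal{H}^{m-1}}$) and which are dangerous (top-order, requiring either the pressure split or the $O(\epsilon)$ absorption via $\nabla\eta$); in particular one must carefully verify that every term of the form $\epsilon \mathcal{Z}^\alpha\nabla^2 u$ appearing in $\mathcal{R}$ factors either into $\epsilon^2\|\nabla u\|_{\mathcal{H}^m}^2$ (after Cauchy--Schwarz with $\|\eta\|_{\mathcal{H}^{m-1}}$) or into the absorbable $\delta\epsilon\|\nabla\eta\|_{\mathcal{H}^{m-1}}^2$ piece, because otherwise one loses a factor of $\epsilon$ and the estimate degenerates in the inviscid limit. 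With these cancellations checked and Young's inequality applied to each pairing, summation over $|\alpha|\le m-1$ and integration in $t$ give exactly \eqref{3.122}.
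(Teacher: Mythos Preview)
Your proposal is correct and follows essentially the same route as the paper: derive a convection--diffusion equation for $\eta$ (the paper's (3.123)), apply $\mathcal{Z}^\alpha$ and test against $\mathcal{Z}^\alpha\eta$, exploit $\eta|_{\partial\Omega}=0$ to avoid boundary terms, split $p=p_1+p_2$ with Cauchy--Schwarz for $p_1$ and an integration by parts for $p_2$, and close the $\epsilon\nabla^2u$ remainder via the equivalence $\epsilon\|\chi\nabla^2u\|_{\mathcal{H}^{m-1}}\lesssim\epsilon(\|\nabla\eta\|_{\mathcal{H}^{m-1}}+\|\nabla u\|_{\mathcal{H}^m}+\|u\|_{\mathcal{H}^m})$ (the paper's (3.131)).

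The one point you leave implicit, and which the paper treats carefully, is the transport commutator $\mathcal{C}_4=-[\mathcal{Z}^\alpha,u\cdot\nabla]\eta$. Lemma~\ref{L2.3} alone is not enough here: the commutator contains terms of the form $\mathcal{Z}^\beta(u\cdot N)\,\mathcal{Z}^\gamma\partial_z\eta$ and $u\cdot N\,\partial_z\mathcal{Z}^\beta\eta$, and $\partial_z\eta$ is a genuine normal derivative not controlled by $\|\eta\|_{\mathcal{H}^{m-1}}$. The paper's fix (see (3.141)--(3.143)) is a Hardy-type argument: since $u\cdot N=0$ on $\partial\Omega$, the quotient $u\cdot N/\varphi(z)$ is bounded in $\mathcal{H}^{k,\infty}$ by $\|u\|_{\mathcal{H}^{k+1,\infty}}+\|\nabla u\|_{\mathcal{H}^{k,\infty}}$, and $\varphi(z)\partial_z=Z_3$ is tangential, so these terms are after all absorbed into $P(M(t))(\|u\|_{\mathcal{H}^m}^2+\|\nabla u\|_{\mathcal{H}^{m-1}}^2)$. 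With this detail filled in, your argument matches the paper's.
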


\begin{proof}
Setting $\mathcal{N}=\nabla u$, we get from \eqref{1.1.3} that
\begin{align*}
\mathcal{N}_t-\epsilon\Delta \mathcal{N}+ u\cdot\nabla\mathcal{N}=-\mathcal{N}^2-\nabla^2p+\nabla(n\nabla\phi).
\end{align*}
Hence, $\eta$ solves the equation
\begin{align}
\eta_t-\epsilon\Delta\eta+u\cdot\nabla\eta=F-2\chi\Pi(\nabla^2p \nu),\label{3.123}
\end{align}
where $F:=F^b+F^\chi+F^\kappa$ with
\begin{align*}
F^b:=&-\chi\Pi\big{(}((\nabla u)^2+((\nabla u)^t)^2)\nu\big{)}-2\zeta\chi\Pi\nabla p-\chi\Pi\big{(}(\nabla(n\nabla \phi))^2\nonumber\\
&\,-((\nabla(n\nabla \phi))^t)^2)\nu\big{)},\\
F^\chi:=&-\epsilon\Delta\chi(\Pi Su \nu+2\zeta\Pi u)-2\epsilon\nabla\chi\cdot\nabla(\Pi Su\nu+2\zeta\Pi u)
+u^\epsilon\cdot\nabla\chi\Pi(Su\nu+2\zeta u),\\
F^\kappa:=&\chi(u\cdot\nabla\Pi)(Su\nu+2\zeta u)+\chi\Pi(Su ( u\cdot\nabla)\nu)-\epsilon\chi(\Delta\Pi)(Su\nu+2\zeta u)\\
&-2\epsilon\chi\nabla\Pi\cdot\nabla(Su\nu+2\zeta u)-\epsilon\chi\Pi(Su\Delta \nu+2\nabla Su\cdot\nabla \nu).
\end{align*}

Let us start with the case of $m=1$. Due to \eqref{1.1.4}, we get
\begin{align}\label{3.124}
\frac{1}{2}\frac{d}{dt}\|\eta\|^2+\epsilon\int_\Omega|\nabla\eta|^2\,dx=\int_\Omega F\cdot\eta\,dx-\int_\Omega\chi\Pi(\nabla^2p\nu)\cdot\eta\,dx.
\end{align}
Now we estimate the right-hand side terms of \eqref{3.124}. In view of Lemma \ref{L2.3}, we easily get
\begin{align}
\int^t_0\|F^b\|_{\mathcal{H}^{m-1}}^2d\tau\leq \,&D_m\,P(M(t))\int^t_0\|\nabla u\|_{\mathcal{H}^{m-1}}^2d\tau+D\int^t_0\|\nabla (n\nabla\phi)\|_{\mathcal{H}^{m-1}}^2d\tau\nonumber\\
&+D_m\,\int^t_0\|\nabla p\|_{\mathcal{H}^{m-1}}^2\,d\tau,\label{3.125}\\
\int^t_0\|F^\chi\|_{\mathcal{H}^{m-1}}^2d\tau\leq \,&D_{m+1}\,\big{(}1+P(M(t))\big{)}\int^t_0\|u\|_{\mathcal{H}^{m}}^2d\tau+D\,\epsilon^2\int^t_0\|\nabla u\|_{\mathcal{H}^{m}}^2\,d\tau,\label{3.126}\\
\int^t_0\|F^\kappa\|_{\mathcal{H}^{m-1}}^2d\tau\leq \,&D_{m+2}\,\big{(}1+P(M(t))\big{)}\int^t_0(\|u\|_{\mathcal{H}^{m}}^2+\|\nabla u\|_{\mathcal{H}^{m-1}}^2
)\,d\tau\nonumber\\
&+D\,\epsilon^2\int^t_0(\|\nabla u\|_{\mathcal{H}^{m-1}}^2+\|\chi\nabla^2 u\|_{\mathcal{H}^{m-1}}^2)\,d\tau.\label{3.127}
\end{align}
Next, we estimate the term involving the pressure $p$ in \eqref{3.124}. By recalling that $p=p_1+p_2$, we get
\begin{equation}\label{3.128}
\Big{|}\int_0^t\int_\Omega\chi\Pi(\nabla^2p \nu)\cdot\eta \,dxd\tau\Big{|}\leq D \int_0^t\|\nabla^2p_1\|\|\eta\|d\tau+\big{|}\int_0^t\int_\Omega\chi\Pi(\nabla^2p_2 \nu)\cdot\eta\,dxd\tau\big{|}.
\end{equation}
Since $\eta=0$ on the boundary, we can integrate by parts the last term in \eqref{3.128} to obtain
\begin{equation}\label{3.129}
\Big{|}\int_0^t\int_\Omega\chi\Pi(\nabla^2p_2 \nu)\cdot\eta\,dxd\tau\Big{|}\leq D\int_0^t\|\nabla p_2\|(\|\nabla\eta\|+\|\eta\|)d\tau.
\end{equation}
Therefore, putting \eqref{3.125}-\eqref{3.129} into \eqref{3.124} and using Young's inequality , we obtain that
\begin{align}\label{3.130}
&\sup_{0\leq\tau\leq t}\|\eta\|^2+\epsilon\int^t_0\|\nabla\eta\|^2d\tau\nonumber\\
\leq&\, D_3\Big{\{}\|\eta_0\|^2+\epsilon^{-1}\int_0^t\|\nabla p_2\|^2d\tau+\int_0^t(\|\nabla^2p_1\|+\|\nabla p\|+\|\nabla(n\nabla\phi)\|)\|\eta\|d\tau\nonumber\\
&+\delta\epsilon^2\int^t_0\|\chi\nabla^2 u\|^2d\tau+\big{(}1+P(M(t))\big{)}\int^t_0(\|u\|_{\mathcal{H}^{1}}^2+\|\nabla u\|^2)d\tau\Big{\}}.
\end{align}
Due to \eqref{3.120} and \eqref{3.121}, we get
\begin{align}\label{3.131}
\epsilon\|\chi\nabla^2u\|_{\mathcal{H}^{m-1}}&\leq D_{m+2}\,\epsilon(\|\nabla\eta\|_{\mathcal{H}^{m-1}}+\|\nabla u\|_{\mathcal{H}^{m}}+\| u\|_{\mathcal{H}^{m}}).
\end{align}
Furthermore, we have
\begin{align}\label{3.132}
&\sup_{0\leq\tau\leq t}\|\eta\|^2+\epsilon\int^t_0\|\nabla\eta\|^2d\tau\nonumber\\
\leq&\, D_3\Big{\{}\|\eta_0\|^2+\epsilon^{-1}\int_0^t\|\nabla p_2\|^2d\tau+\int_0^t(\|\nabla^2p_1\|+\|\nabla p\|+\|\nabla(n\nabla\phi)\|)\|\eta\|d\tau\nonumber\\
&+\delta\epsilon^2\int^t_0\|\nabla u\|_{\mathcal{H}^{1}}^2d\tau+\big{(}1+P(M(t))\big{)}\int^t_0(\|u\|_{\mathcal{H}^{1}}^2+\|\nabla u\|^2)d\tau\Big{\}}.
\end{align}
Hence, \eqref{3.122} holds for $m=1$.

Now we assume that Lemma \ref{L3.10} is true for $|\alpha|\leq m-2$ and let us consider the situation of $|\alpha|= m-1$. By applying $\mathcal{Z}^\alpha$ to \eqref{3.123}, we have
\begin{align}
\mathcal{Z}^\alpha\eta_t-\epsilon\mathcal{Z}^\alpha\Delta\eta+u\cdot\nabla Z^\alpha\eta
=\mathcal{Z}^\alpha F-\mathcal{Z}^\alpha(2\chi\Pi(\nabla^2p\nu))+\mathcal{C}_4,\label{3.133}
\end{align}
where
\begin{equation}\nonumber
\begin{split}
\mathcal{C}_4:=-[\mathcal{Z}^\alpha, u\cdot\nabla]\eta.
\end{split}
\end{equation}
Multiplying \eqref{3.133} by $\mathcal{Z}^\alpha\eta^\epsilon$, we obtain that
\begin{align}\label{3.134}
\frac{1}{2}\frac{d}{dt}\|\mathcal{Z}^\alpha\eta\|^2=&
\epsilon \int_\Omega\mathcal{Z}^\alpha\Delta\eta\cdot\mathcal{Z}^\alpha\eta\,dx
-2\int_\Omega\mathcal{Z}^\alpha(\chi\Pi(\nabla^2p\nu))\cdot\mathcal{Z}^\alpha\eta\,dx\nonumber\\
&+\int_\Omega\mathcal{Z}^\alpha F\cdot\mathcal{Z}^\alpha\eta\,dx+\int_\Omega\mathcal{C}_4\cdot\mathcal{Z}^\alpha\eta\,dx.
\end{align}

First, let us estimate the viscous term. We observe that
\begin{align}\label{3.135}
&\epsilon\int^t_0\int_\Omega \mathcal{Z}^\alpha\partial_{ii}\eta\cdot \mathcal{Z}^\alpha\eta dxd\tau\nonumber\\
=&-\epsilon\int^t_0\int_\Omega|\partial_i\mathcal{Z}^\alpha\eta|^2 dxd\tau-\epsilon\int^t_0\int_\Omega[\mathcal{Z}^\alpha,\partial_i]\eta\cdot\partial_i\mathcal{Z}^\alpha\eta dxd\tau\nonumber\\
&+\epsilon\int^t_0\int_\Omega[\mathcal{Z}^\alpha,\partial_i]\partial_i\eta\cdot \mathcal{Z}^\alpha\eta dxd\tau,
\end{align}
where $i=1, 2, 3$. In order to estimate the last two terms in the right-hand side of \eqref{3.135}, we use the structure of the commutator $[\mathcal{Z}^\alpha,\partial_i]$ and the expansion
$\partial_i=\beta^1\partial_{y^1}+\beta^2\partial_{y^2}+\beta^3\partial_{y^3}$ in the local basis. We have the following expansion
\begin{align}
[\mathcal{Z}^\alpha,\partial_i]\eta=\sum_{\gamma,|\gamma|\leq|\alpha|-1}d_\gamma\partial_z\mathcal{Z}^\gamma\eta+\sum_{\beta,|\beta|\leq|\alpha|}d_\beta \mathcal{Z}^\beta\eta.\nonumber
\end{align}
Thus, we have
\begin{align}
&\epsilon\Big{|}\int^t_0\int_\Omega[\mathcal{Z}^\alpha,\partial_i]\eta\cdot\partial_i\mathcal{Z}^\alpha\eta\,dxd\tau\Big{|}\nonumber\\
\leq &\,D_m\,\epsilon\int^t_0\|\nabla \mathcal{Z}^{m-1}\eta\|(\|\nabla\eta\|_{\mathcal{H}^{m-2}}+\|\eta\|_{\mathcal{H}^{m-1}})\,d\tau,\label{3.136}\\
&\epsilon\Big{|}\int^t_0\int_\Omega[\mathcal{Z}^\alpha,\partial_i]\partial_i\eta\cdot \mathcal{Z}^\alpha\eta \,dxd\tau\Big{|}\nonumber\\
\leq&\, D_m\,\epsilon\Big{\{}\int^t_0\|\nabla\eta\|_{\mathcal{H}^{m-1}}\|\eta\|_{\mathcal{H}^{m-1}}\,d\tau
+\sum_{|\gamma|\leq m-2}\Big{|}\int^t_0\int_\Omega D_\gamma\partial_z\mathcal{Z}^\gamma\partial_i\eta\cdot\mathcal{Z}^\alpha\eta \,dxd\tau\Big{|}\Big{\}}.\label{3.137}
\end{align}
Furthermore, by virtue of $\mathcal{Z}^\alpha\eta^\epsilon=0$ on $\partial\Omega$ and integration by parts, we obtain that
\begin{align}
&\epsilon\Big{|}\int^t_0\int_\Omega[\mathcal{Z}^\alpha,\partial_i]\partial_i\eta\cdot \mathcal{Z}^\alpha\eta dxd\tau\Big{|}\nonumber\\
\leq \,&D_{m+1}\,\epsilon\int^t_0\|\nabla\eta\|_{\mathcal{H}^{m-1}}(\|\eta\|_{\mathcal{H}^{m-1}}+\|\nabla\eta\|_{\mathcal{H}^{m-2}})\,d\tau.\label{3.138}
\end{align}

Second, we deal with the commutator term $\mathcal{C}_4$. Note that
\begin{align}
\mathcal{C}_4=&-\sum_{|\beta|\geq1,\beta+\gamma=\alpha}\sum_{i=1}^2D_{\beta,\gamma}\mathcal{Z}^\beta u_i\cdot\mathcal{Z}^\gamma\partial_{y^i}\eta\nonumber\\
&-\sum_{|\beta|\geq1,\beta+\gamma=\alpha}D_{\beta,\gamma}\mathcal{Z}^\beta( u\cdot N)\mathcal{Z}^\gamma\partial_z\eta-u\cdot N\sum_{|\beta|\leq m-2}D_\beta\partial_z\mathcal{Z}^\beta\eta.\label{3.139}
\end{align}
By using Lemma \ref{L2.3}, we can easily obtain that
\begin{align}\label{3.140}
&\sum_{|\beta|\geq1,\beta+\gamma=\alpha}\sum_{i=1}^2D_{\beta,\gamma}\int^t_0\|\mathcal{Z}^\beta u_i\cdot\mathcal{Z}^\gamma\partial_{y^i}\eta\|^2\,d\tau\nonumber\\
\leq&\,D_{m+2}\,P(M(t))\int^t_0(\|u^\epsilon\|_{\mathcal{H}^{m}}^2+\|\nabla u^\epsilon\|_{\mathcal{H}^{m-1}}^2) \,d\tau.
\end{align}
Since we want to get an estimate independent of $\partial_z\eta^\epsilon$, by using Hardy's inequality, we have
\begin{align}\label{3.141}
\sum_{|\beta|\leq m-2}\int^t_0\|u\cdot ND_\beta\partial_z\mathcal{Z}^\beta\eta\|^2d\tau\leq&\,\sum_{|\beta|\leq m-2}\int^t_0\|\frac{u^\epsilon\cdot N}{\varphi(z)}D_\beta Z_3\mathcal{Z}^\beta\eta\|^2\,d\tau\nonumber\\
\leq&\,D_{m+2}\,P(M(t))\int^t_0(\|u\|_{\mathcal{H}^{m}}^2+\|\nabla u\|_{\mathcal{H}^{m-1}}^2) \,d\tau.
\end{align}
Also, we note that for $|\beta|\geq1$, $\beta+\gamma=\alpha$ and $|\alpha|=m-1$, it holds
\begin{align}\label{3.142}
\mathcal{Z}^\beta(u\cdot N)\mathcal{Z}^\gamma\partial_z\eta&=\frac{1}{\varphi(z)}\mathcal{Z}^\beta(u\cdot N)\cdot\varphi(z)\mathcal{Z}^\gamma\partial_z\eta\nonumber\\
&=\sum_{|\tilde{\beta}|\leq\beta,|\tilde{\gamma}|\leq\gamma}D_{\tilde{\beta},\tilde{\gamma}}\mathcal{Z}^{\tilde{\beta}}(\frac{u^\epsilon\cdot N}{\varphi(z)})\mathcal{Z}^{\tilde{\gamma}}(Z_3\eta),
\end{align}
where $|\tilde{\beta}|+|\tilde{\gamma}|\leq m-1, |\tilde{\gamma}|\leq m-2$ and $D_{\tilde{\beta},\tilde{\gamma}}$ are some smooth bounded coefficient. By using Hardy's inequality, we have
\begin{align}\label{3.143}
&\sum_{|\beta|\geq1,\beta+\gamma=\alpha}\int^t_0\|D_{\beta,\gamma}\mathcal{Z}^\beta( u\cdot N)\mathcal{Z}^\gamma\partial_z\eta\|^2 \,d\tau\nonumber\\
\leq&\,D_{m+2}\,P(M(t))\int^t_0(\|u\|_{\mathcal{H}^{m}}^2+\|\nabla u\|_{\mathcal{H}^{m-1}}^2) \,d\tau.
\end{align}
Therefore, from \eqref{3.140}-\eqref{3.143}, we get
\begin{align}\label{3.144}
\int^t_0\|\mathcal{C}_4\|^2d\tau\leq&\,D_{m+2}\,P(M(t))\int^t_0(\|u\|_{\mathcal{H}^{m}}^2+\|\nabla u\|_{\mathcal{H}^{m-1}}^2) d\tau.
\end{align}

Next, it remains to deal with the term involving the pressure $p$. As above, we use the split $p=p_1+p_2$ and integrate by parts the term involving $p_2$. We have
\begin{align}\label{3.145}
&\Big{|}\int^t_0\int_\Omega\mathcal{Z}^\alpha(\chi\Pi(\nabla^2p \nu))\cdot\mathcal{Z}^\alpha\eta \,dxd\tau\Big{|}\nonumber\\
\leq & \,D_{m+2}\,\int^t_0\big{(}\|\nabla^2p_1\|_{\mathcal{H}^{m-1}}\|\eta\|_{\mathcal{H}^{m-1}}
+\|\nabla p_2\|_{\mathcal{H}^{m-1}}(\|\nabla\eta\|_{\mathcal{H}^{m-1}}+\|\eta\|_{\mathcal{H}^{m-1}})\big{)}\,d\tau.
\end{align}

Finally, from \eqref{3.125}-\eqref{3.127} and \eqref{3.131}, we get
\begin{align}\label{3.146}
\int^t_0\|F\|_{\mathcal{H}^{m-1}}^2d\tau
\leq&\, D_{m+2}\big{(}1+P(M(t))\big{)}\int^t_0(\|u\|_{\mathcal{H}^{m}}^2+\|\nabla u\|_{\mathcal{H}^{m-1}}^2)\,d\tau\nonumber\\
&+D\epsilon^2\int^t_0(\|\nabla u\|_{\mathcal{H}^{m}}^2+\|\nabla\eta\|_{\mathcal{H}^{m-1}}^2)\,d\tau+D_{m+2}\,\int^t_0\|\nabla p\|_{\mathcal{H}^{m-1}}^2\,d\tau\nonumber\\
&+D\int^t_0\|\nabla (n\nabla\phi)\|_{\mathcal{H}^{m-1}}^2\,d\tau.
\end{align}

By collecting \eqref{3.135}, \eqref{3.136}, \eqref{3.138}, \eqref{3.144}-\eqref{3.146}, Young's inequality, and the inductive assumption, we can get \eqref{3.122}. Hence, the proof of Lemma \ref{L3.10} is completed.
\end{proof}

\subsection{Pressure Estimates} To enclose our a priori estimates, it remains to estimate the pressure terms and the $L^\infty$ norms. The aim of this subsection is to give the pressure estimates and present the $L^\infty$ estimates in next subsection.
\begin{Lemma}\label{L3.11} For every $m\geq2$, we have the following estimates:
\begin{align}
\int_0^t(\|\nabla p_1\|_{\mathcal{H}^{m-1}}^2+\|\nabla^2 p_1\|_{\mathcal{H}^{m-1}}^2)\,d\tau\leq&\, D_{m+2}\,P(M(t))\int_0^t(\|u\|_{\mathcal{H}^{m}}^2+\|\nabla u\|_{\mathcal{H}^{m-1}}^2)\, d\tau\nonumber\\
&+\int_0^t(\|n\nabla \phi\|_{\mathcal{H}^{m}}^2+\|\nabla (n\nabla \phi)\|_{\mathcal{H}^{m-1}}^2)\,d\tau,\label{3.147}\\
\int_0^t(\|p_2\|_{\mathcal{H}^{m-1}}^2+\|\nabla p_2\|_{\mathcal{H}^{m-1}}^2)\,d\tau\leq&\, D_{m+2}\epsilon\int_0^t(\|u\|_{\mathcal{H}^{m}}^2+\|\nabla u\|_{\mathcal{H}^{m-1}}^2)\,d\tau.\label{3.148}
\end{align}
\end{Lemma}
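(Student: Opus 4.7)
The plan is to treat $p_1$ and $p_2$ separately, relying on elliptic regularity theory for the Neumann problem combined with conormal commutator estimates, much in the spirit of the corresponding pressure estimates in \cite{MR}.

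For the ``Euler'' part $p_1$, I first apply $\mathcal{Z}^\alpha$ with $|\alpha|\leq m-1$ to the Neumann problem \eqref{3.8.11}. Because the tangential fields $Z_i$ do not commute with $\Delta$ nor with $\partial_\nu$ on a curved boundary, this produces commutator source terms in the interior equation and in the boundary data. I then invoke the standard $H^1$ and $H^2$ Neumann elliptic estimates (up to an additive constant, which is harmless since only $\nabla p_1$ appears) to bound $\|\nabla p_1\|_{\mathcal{H}^{m-1}}$ and $\|\nabla^2 p_1\|_{\mathcal{H}^{m-1}}$ in terms of the $L^2$ norms of the right-hand sides and the $H^{1/2}$ norm of the boundary data, combined with the trace inequality of Lemma \ref{L2.2}. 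The source $\nabla\cdot(u\cdot\nabla u)$ and its boundary value $(u\cdot\nabla u)\cdot\nu$ are handled via Lemma \ref{L2.3} and the pointwise bound built into $M(t)$, producing the $D_{m+2}P(M(t))\int(\|u\|_{\mathcal{H}^m}^2+\|\nabla u\|_{\mathcal{H}^{m-1}}^2)\,d\tau$ term; the forcing $\nabla\cdot(n\nabla\phi)$ and its boundary value $n\nabla\phi\cdot\nu$ contribute $\|n\nabla\phi\|_{\mathcal{H}^m}^2+\|\nabla(n\nabla\phi)\|_{\mathcal{H}^{m-1}}^2$ directly.

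For the ``Navier--Stokes'' part $p_2$, I use that $p_2$ is harmonic, so the elliptic estimate reduces everything to the boundary data $\epsilon\Delta u\cdot\nu$. Naively this is bad, since $\Delta u\cdot\nu$ involves normal second derivatives of $u$ that are not controlled by $\|u\|_{\mathcal{H}^m}+\|\nabla u\|_{\mathcal{H}^{m-1}}$. The key structural trick is that $\nabla\cdot u=0$ yields $\Delta u=-\nabla\times\omega$ with $\omega=\nabla\times u$, hence
\begin{equation*}
\Delta u\cdot\nu=-(\nabla\times\omega)\cdot\nu=-\mathrm{div}_{\tau}(\omega\times\nu),
\end{equation*}
a pure tangential divergence on $\partial\Omega$. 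Applying $\mathcal{Z}^\alpha$ to the Neumann problem \eqref{3.9}, testing against $\mathcal{Z}^\alpha p_2$, and integrating by parts along the boundary transfers one tangential derivative from the boundary data onto $p_2$; the trace inequality from Lemma \ref{L2.2} then converts the boundary norm into interior conormal norms of $\omega$ (hence of $\nabla u$) and of $p_2$. Together with the harmonic estimate $\|\nabla p_2\|_{\mathcal{H}^{m-1}}\leq D_{m+2}(\|p_2\|_{\mathcal{H}^{m-1}}+\text{boundary data})$ and Young's inequality, one absorbs the $p_2$ contributions into the left-hand side and collects the factor $\epsilon$ from the boundary data, producing exactly $D_{m+2}\epsilon\int(\|u\|_{\mathcal{H}^m}^2+\|\nabla u\|_{\mathcal{H}^{m-1}}^2)\,d\tau$. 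The commutators $[\mathcal{Z}^\alpha,\Delta]p_2$ in the interior, and the fact that $Z_3$ contains the weight $\varphi(z)$ that vanishes at the boundary, are handled exactly as in Lemma \ref{L3.10} via Hardy's inequality, so that stray $\partial_z$ factors are absorbed into $\|\nabla u\|_{\mathcal{H}^{m-1}}$.

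I expect the main obstacle to be the $p_2$ estimate: extracting the full factor of $\epsilon$ (not $\epsilon^{1/2}$) on the right-hand side while keeping only $\|u\|_{\mathcal{H}^m}^2+\|\nabla u\|_{\mathcal{H}^{m-1}}^2$, i.e.\ without paying with $\|\nabla u\|_{\mathcal{H}^m}^2$ or $\|\nabla^2 u\|$ near the boundary. This is precisely where the identity $\Delta u\cdot\nu=-\mathrm{div}_\tau(\omega\times\nu)$ is indispensable, and where the combination of the tangential integration by parts, the trace estimates of Lemma \ref{L2.2}, and Young's inequality with carefully chosen weights must be executed so that all terms with a prefactor $\epsilon^{1/2}\|\nabla^2 u\|$ are either squared into $\epsilon\|\nabla u\|_{\mathcal{H}^{m-1}}^2$ or absorbed by the harmonic side.
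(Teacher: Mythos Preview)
Your treatment of $p_1$ is essentially the paper's: apply time and tangential derivatives to the Neumann problem \eqref{3.8.11}, invoke elliptic regularity, control the boundary data via the trace estimate of Lemma~\ref{L2.2} (using $u\cdot\nu=0$ to write $(u\cdot\nabla u)\cdot\nu$ as a tangential expression in $u\otimes u$), and close with Lemma~\ref{L2.3}.

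For $p_2$ your route via the vorticity identity $\Delta u\cdot\nu=-\mathrm{div}_\tau(\omega\times\nu)$ is a legitimate alternative to the paper's decomposition through $Su\,\nu$, but there is a genuine gap. After you write the Neumann datum as a tangential divergence, you still need to control $|\omega\times\nu|_{H^{m-|\alpha_0|-\frac12}(\partial\Omega)}$ (or, in your variational version, $|\mathcal{Z}^\alpha(\omega\times\nu)|_{L^2(\partial\Omega)}$). Applying the trace estimate of Lemma~\ref{L2.2} directly to $\omega$ forces either $\|\nabla\omega\|_{\mathcal{H}^{m-1}}\sim\|\nabla^2 u\|_{\mathcal{H}^{m-1}}$ or $\|\omega\|_{\mathcal{H}^m}\sim\|\nabla u\|_{\mathcal{H}^m}$ to appear, and neither is permitted on the right-hand side of \eqref{3.148}. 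Your closing hope that Young's inequality will convert ``$\epsilon^{1/2}\|\nabla^2 u\|$'' into ``$\epsilon\|\nabla u\|_{\mathcal{H}^{m-1}}^2$'' cannot work: $\|\nabla^2 u\|$ contains two normal derivatives and is simply not dominated by any conormal norm of $\nabla u$.

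The missing ingredient is the Navier boundary condition itself. On $\partial\Omega$ one has $\omega\times\nu=\Pi(\omega\times\nu)$, and combining \eqref{1.12} with the identity
\[
\Pi(\omega\times\nu)=\Pi\big(\partial_\nu u-\nabla(u\cdot\nu)+(\nabla\nu)^t u+u\times(\nabla\times\nu)\big)
\]
shows that $\omega\times\nu|_{\partial\Omega}$ is a zeroth-order (in derivatives) expression in $u$; hence $|\omega\times\nu|_{H^{m-|\alpha_0|-\frac12}(\partial\Omega)}\leq D_{m+2}\,|\partial_t^{\alpha_0}u|_{H^{m-|\alpha_0|-\frac12}(\partial\Omega)}$, after which Lemma~\ref{L2.2} yields exactly $\|u\|_{\mathcal{H}^m}^2+\|\nabla u\|_{\mathcal{H}^{m-1}}^2$. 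The paper achieves the same reduction through the equivalent route $\Delta u\cdot\nu=2\nabla\cdot(Su\,\nu)-2\sum_j(Su\,\partial_j\nu)_j$, using $\Pi(Su\,\nu)=-\zeta\,\Pi u$ on $\partial\Omega$ together with the divergence-free relation \eqref{3.17} for the normal--normal part; see the chain of inequalities leading to $|\partial_t^{\alpha_0}\Delta u\cdot\nu|_{H^{m-|\alpha_0|-\frac32}(\partial\Omega)}\leq D_{m+2}\,|\partial_t^{\alpha_0}u|_{H^{m-|\alpha_0|-\frac12}(\partial\Omega)}$. Either way, the boundary conditions are what buy you the full factor of $\epsilon$ without second normal derivatives of $u$---this step is not optional and is absent from your outline.
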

\begin{proof}
From \eqref{3.8.11} and \eqref{3.9}, we obtain that
\begin{equation}
\left\{
\begin{aligned}
\Delta(\partial_t^{\alpha_0} p_1)&=-\partial_t^{\alpha_0}\nabla\cdot(u\cdot\nabla u)-\partial_t^{\alpha_0}\nabla\cdot(n\nabla \phi)\quad \text{in}\quad \Omega,\\
\partial_\nu(\partial_t^{\alpha_0}p_1)&=-\partial_t^{\alpha_0}(u\cdot\nabla u)\cdot \nu-\partial_t^{\alpha_0}(n\nabla \phi)\cdot \nu\quad \text{on}\quad \partial\Omega
\end{aligned}
\right.
\end{equation}
and
\begin{equation}
\left\{
\begin{aligned}
&\Delta (\partial_t^{\alpha_0}p_2)=0\quad \text{in}\quad \Omega,\\
&\partial_\nu(\partial_t^{\alpha_0}p_2)=\epsilon\,\partial_t^{\alpha_0}\Delta u\cdot \nu\quad \text{on}\quad \partial\Omega.
\end{aligned}
\right.
\end{equation}

First, we deal with $p_1$. From the standard elliptic regularity results with Neumann boundary condition, we obtain that
\begin{align*}
&\|\nabla \partial_t^{\alpha_0} p^\epsilon_1\|_{H^{|\alpha_1|}_{co}}+\|\nabla^2 \partial_t^{\alpha_0} p^\epsilon_1\|_{H^{|\alpha_1|}_{co}}\nonumber\\
\leq&\, D_{m+1}\,\big{(}\|\partial_t^{\alpha_0}\nabla\cdot(u\cdot\nabla u+n\nabla\phi)\|_{H^{|\alpha_1|}_{co}}+\|\partial_t^{\alpha_0}(u\cdot\nabla u+n\nabla\phi)\|\nonumber\\
&+|\partial_t^{\alpha_0}(u\cdot\nabla u+n\nabla\phi)\cdot\nu|_{H^{m-|\alpha_0|-\frac{1}{2}}(\partial\Omega)}\big{)},
\end{align*}
where $|\alpha_0|+|\alpha_1|=m-1$. Due to $u\cdot \nu=0$ on $\partial\Omega$ and Lemma \ref{L2.2}, we get
\begin{align*}
&|\partial_t^{\alpha_0}(u\cdot\nabla u+n\nabla\phi)\cdot \nu|_{H^{m-|\alpha_0|-\frac{1}{2}}(\partial\Omega)}\nonumber\\
\leq &\,D_{m+2}\,(\|\nabla (u\otimes u)\|_{\mathcal{H}^{m-1}}+\| u\otimes u\|_{\mathcal{H}^{m}}+\|\nabla (n\nabla \phi)\|_{\mathcal{H}^{m-1}}+\|n\nabla \phi\|_{\mathcal{H}^{m}}).
\end{align*}
Using Lemma \ref{L2.3}, we easily get \eqref{3.147}.

It remains to estimate $p_2$. By using the standard elliptic regularity results with Neumann boundary condition again, we obtain that
\begin{align}
\|\partial_t^{\alpha_0}p_2\|_{H^{m-|\alpha_0|-1}_{co}}+\|\nabla\partial_t^{\alpha_0} p_2\|_{H^{m-|\alpha_0|-1}_{co}}\leq D_m\,\epsilon\, |\partial_t^{\alpha_0}\Delta u^\epsilon\cdot \nu|_{H^{m-|\alpha_0|-\frac{3}{2}}(\partial\Omega)}.\nonumber
\end{align}
Since
\begin{align}
\partial_t^{\alpha_0}\Delta u\cdot \nu=2\Big{(}\partial_t^{\alpha_0}\nabla\cdot(Su \nu)-\sum_j\partial_t^{\alpha_0}(Su\partial_j\nu)_j\Big{)},\nonumber
\end{align}
we have
\begin{align*}
|\partial_t^{\alpha_0}\Delta u\cdot \nu|_{H^{m-|\alpha_0|-\frac{3}{2}}(\partial\Omega)}\leq D\,|\partial_t^{\alpha_0}\nabla\cdot(Su \nu)|_{H^{m-|\alpha_0|-\frac{3}{2}}(\partial\Omega)}\\
+\,D_{m+1}\,|\partial_t^{\alpha_0}\nabla u|_{H^{m-|\alpha_0|-\frac{3}{2}}(\partial\Omega)}.
\end{align*}
Due to \eqref{3.19}, we can further arrive at
\begin{align}
|\partial_t^{\alpha_0}\Delta u\cdot \nu|_{H^{m-|\alpha_0|-\frac{3}{2}}(\partial\Omega)}\leq&\, D\,|\partial_t^{\alpha_0}\nabla\cdot(Su \nu)|_{H^{m-|\alpha_0|-\frac{3}{2}}(\partial\Omega)}\nonumber\\
&+\,D_{m+1}\,|\partial_t^{\alpha_0}u|_{H^{m-|\alpha_0|-\frac{1}{2}}(\partial\Omega)}.\nonumber
\end{align}

Let us estimate $|\partial_t^{\alpha_0}\nabla\cdot(Su \nu)|_{H^{m-|\alpha_0|-\frac{3}{2}}(\partial\Omega)}$. Thank to \eqref{3.17}, we get
\begin{align*}
&|\partial_t^{\alpha_0}\nabla\cdot(Su \nu)|_{H^{m-|\alpha_0|-\frac{3}{2}}(\partial\Omega)}\nonumber\\
\leq& \,D\,|\partial_\nu\partial_t^{\alpha_0}(Su \nu)\cdot \nu|_{H^{m-|\alpha_0|-\frac{3}{2}}(\partial\Omega)}+D\,\big{(}|\Pi\partial_t^{\alpha_0}(Su \nu)|_{H^{m-|\alpha_0|-\frac{1}{2}}(\partial\Omega)}\nonumber\\
&+|\nabla\partial_t^{\alpha_0} u|_{H^{m-|\alpha_0|-\frac{3}{2}}(\partial\Omega)}\big{)}.
\end{align*}
Also, due to \eqref{3.19} and the Navier boundary condition \eqref{1.2}, we get
\begin{align}\label{3.149}
|\partial_t^{\alpha_0}\nabla\cdot(Su \nu)|_{H^{m-|\alpha_0|-\frac{3}{2}}(\partial\Omega)}\leq &\,D\,|\partial_\nu\partial_t^{\alpha_0}(Su \nu)\cdot \nu|_{H^{m-|\alpha_0|-\frac{3}{2}}(\partial\Omega)}\nonumber\\
&+|\partial_t^{\alpha_0}u|_{H^{m-|\alpha_0|-\frac{1}{2}}(\partial\Omega)}.
\end{align}
The first term in the right-hand side of \eqref{3.149} can be estimated as
\begin{align*}
&|\partial_\nu\partial_t^{\alpha_0}(Su \nu)\cdot \nu|_{H^{m-|\alpha_0|-\frac{3}{2}}(\partial\Omega)}\nonumber\\
\leq&\, D\,|\partial_\nu\partial_t^{\alpha_0}(\partial_\nu u\cdot \nu)|_{H^{m-|\alpha_0|-\frac{3}{2}}(\partial\Omega)}+D_{m+1}\,|\nabla\partial_t^{\alpha_0} u|_{H^{m-|\alpha_0|-\frac{3}{2}}(\partial\Omega)}\nonumber\\
\leq&\, D\,|\partial_\nu\partial_t^{\alpha_0}(\partial_\nu u\cdot \nu)|_{H^{m-|\alpha_0|-\frac{3}{2}}(\partial\Omega)}+D_{m+1}\,|\partial_t^{\alpha_0}u|_{H^{m-|\alpha_0|-\frac{1}{2}}(\partial\Omega)}.
\end{align*}
By taking the normal derivative of \eqref{3.17} and using \eqref{1.12}, we obtain that
\begin{align*}
&|\partial_\nu\partial_t^{\alpha_0}(\partial_\nu u\cdot \nu)|_{H^{m-|\alpha_0|-\frac{3}{2}}(\partial\Omega)}\nonumber\\
\leq&\,D\,|\Pi\partial_t^{\alpha_0}\partial_\nu u|_{H^{m-|\alpha_0|-\frac{1}{2}}(\partial\Omega)}+D_{m+1} \,|\nabla\partial_t^{\alpha_0} u|_{H^{m-|\alpha_0|-\frac{3}{2}}(\partial\Omega)}\nonumber\\
\leq &\,D_{m+2}\,|\partial_t^{\alpha_0}u|_{H^{m-|\alpha_0|-\frac{1}{2}}(\partial\Omega)}.
\end{align*}
Consequently, we have
\begin{align}
|\partial_t^{\alpha_0}\Delta u\cdot \nu|_{H^{m-|\alpha_0|-\frac{3}{2}}(\partial\Omega)}\leq D_{m+2}\,|\partial_t^{\alpha_0}u|_{H^{m-|\alpha_0|-\frac{1}{2}}(\partial\Omega)}.\nonumber
\end{align}
By virtue of Lemma \ref{L2.2}, we finally get \eqref{3.148}. Therefore, we complete the proof of Lemma \ref{L3.11}.
\end{proof}

Substituting \eqref{3.122} and \eqref{3.147}-\eqref{3.148} into \eqref{3.119.1}, we can obtain that
\begin{align}\label{3.150}
&\sup_{0\leq\tau\leq t}\big{(}\|(n,c,u)\|^2_{\mathcal{H}^m}+\|\nabla(n,u)\|^2_{\mathcal{H}^{m-1}}+\|\nabla c\|^2_{\mathcal{H}^{m}}
\nonumber\\
&+\|\Delta(n,c)\|^2_{\mathcal{H}^{m-1}}\big{)}+\epsilon\int_0^t(\|\nabla u\|^2_{\mathcal{H}^{m}}+\|\nabla^2 u\|^2_{\mathcal{H}^{m-1}})\,d\tau\nonumber\\
&+\int_0^t(\|\nabla n\|^2_{\mathcal{H}^{m}}+\|\Delta c\|^2_{\mathcal{H}^{m}}+\|\nabla\Delta(n,c)\|^2_{\mathcal{H}^{m-1}})\,d\tau\nonumber\\
\leq&\,D_{m+2}\,\Big{\{}N_m(0)
+\big{(}1+P(M(t))\big{)}\int_0^tN_m(\tau)d\tau\nonumber\\
&+\int_0^t(\|n\nabla\phi\|_{\mathcal{H}^{m}}^2+\|\nabla(n\nabla\phi)\|_{\mathcal{H}^{m-1}}^2)\,d\tau\Big{\}}.
\end{align}

\subsection{$L^\infty$ estimates}
In order to close the estimate \eqref{3.150}, we need to give the $L^\infty$ estimates for $(n,c,u)$.
\begin{Lemma}\label{L3.12}
For every $m\geq 4$, we have the following estimates:
\begin{align}
\|n\|_{W^{2,\infty}}^2\leq&\, D (N_m(t)+N_m^2(t)+N_m^3(t)),\label{3.151}\\
\|u\|_{\mathcal{H}^{2,\infty}}^2\leq&\, D N_m(t),\label{3.152}\\
\|c\|_{W^{2,\infty}}^2\leq&\, D (N_m(t)+N_m^2(t)),\label{3.153}\\
\|\nabla c\|_{\mathcal{H}^{1,\infty}}^2\leq&\, DN_m(t),\label{3.154}\\
\|\nabla\Delta c\|_{L^\infty}^2\leq&\,  D (N_m(t)+N_m^2(t)).\label{3.155}
\end{align}
\end{Lemma}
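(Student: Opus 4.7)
\medskip

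The overall strategy is that the five estimates in Lemma \ref{L3.12} are all $L^\infty$ bounds obtainable from two tools: the anisotropic Sobolev embedding of Lemma \ref{L2.2} and the evolution equations \eqref{1.1.1}--\eqref{1.1.2}. The embedding converts conormal $L^2$ control (which is precisely what $N_m(t)$ encodes) into pointwise control, and the equations let us trade the one kind of derivative we cannot control directly, namely normal-normal second derivatives of $n$ and $c$, against time derivatives and lower-order terms. Throughout, I will use the elliptic identity \eqref{3.69} to upgrade $\|\Delta(n,c)\|_{\mathcal H^{m-1}}$ to $\|\nabla^2(n,c)\|_{\mathcal H^{m-1}}$ so that the conormal $H^{k}_{co}$-norms of $\nabla n$, $\nabla c$ and $\nabla^2 c$ for moderate $k$ are all majorized by $\sqrt{N_m(t)}$.

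For the two ``pure embedding'' estimates \eqref{3.152} and \eqref{3.154} I would apply Lemma \ref{L2.2} to $\mathcal Z^\alpha u$ (resp. $\mathcal Z^\alpha\nabla c$) for $|\alpha|\leq 2$ (resp. $|\alpha|\leq 1$) with the split $m_1=0$, $m_2=3$. The condition $m_1+m_2\geq 3$ is met, and the resulting right-hand side involves $\|\nabla u\|_{\mathcal H^{5}}$, $\|u\|_{\mathcal H^{5}}$, $\|\nabla^2 c\|_{\mathcal H^{4}}$, $\|\nabla c\|_{\mathcal H^{4}}$ and their $m_1$-counterparts, all of which are dominated by $\sqrt{N_m(t)}$ since $m\geq 6$ (using \eqref{3.69} for the $\|\nabla^2 c\|$ term). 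No nonlinear growth appears here.

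For \eqref{3.151}, \eqref{3.153} and \eqref{3.155} I first obtain $\|n\|_{L^\infty}$, $\|\nabla n\|_{L^\infty}$, $\|c\|_{L^\infty}$, $\|\nabla c\|_{L^\infty}\lesssim\sqrt{N_m}$ by the same embedding trick (including the time derivatives $n_t,c_t$, which are $\mathcal Z$-derivatives and thus sit in $N_m$). Next I rewrite the top-order spatial pieces via the equations: from \eqref{1.1.2},
\begin{equation*}
\Delta c = c_t+u\cdot\nabla c+nc,
\end{equation*}
so $\|\Delta c\|_{L^\infty}^2\leq D(N_m+N_m^2)$ by Hölder. From \eqref{1.1.1},
\begin{equation*}
\Delta n = n_t+u\cdot\nabla n+\nabla n\cdot\nabla c+n\Delta c,
\end{equation*}
and the last term contributes $\|n\|_{L^\infty}^2\|\Delta c\|_{L^\infty}^2\lesssim N_m(N_m+N_m^2)$, which is exactly the cubic $N_m^3$ allowed in \eqref{3.151}. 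To pass from $\|\Delta\|_{L^\infty}$ to $\|\nabla^2\|_{L^\infty}$ I use the covering \eqref{c} and the local chart \eqref{1.18}: on $\Omega_0$, standard interior $H^2\hookrightarrow L^\infty$ together with $N_m$-control gives the bound; on each boundary chart, tangential second derivatives $\partial_i\partial_j f$ ($i,j\in\{1,2\}$) are linear combinations of $Z_iZ_j f$ plus lower-order boundary-geometry terms and fall under Lemma \ref{L2.2} applied to $\mathcal Z^\alpha f$ with $|\alpha|=2$, while the mixed and purely normal parts $\partial_3\partial_i f$, $\partial_3^2 f$ are recovered algebraically from $\Delta f$ minus the tangential ones. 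Finally \eqref{3.155} is obtained by differentiating \eqref{1.1.2} once to get $\nabla\Delta c=\nabla c_t+\nabla(u\cdot\nabla c)+\nabla(nc)$ and bounding each summand by previous $L^\infty$ estimates, where the term $\|\nabla c_t\|_{L^\infty}$ is handled by Lemma \ref{L2.2} applied to $\partial_t\nabla c$.

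The main obstacle is the last recovery step, namely turning pointwise control of $\Delta n$ and $\Delta c$ into pointwise control of the full Hessians $\nabla^2 n$, $\nabla^2 c$ in $L^\infty$. Unlike the $L^2$-based conormal setting, there is no elliptic $L^\infty$ regularity in a bounded Lipschitz domain; one has to do this by hand in local coordinates, separating tangential pieces (which are $Z$-derivatives up to smooth curvature corrections) from the single normal-normal scalar $\partial_3^2 f$ that is then read off from $\Delta f$. Bookkeeping the boundary-geometry commutators (controlled by $D_{m+2}$) and keeping careful track of the degree of nonlinearity (linear, quadratic, cubic in $N_m$) are the delicate points; everything else is a direct application of Lemma \ref{L2.2}, \eqref{3.69}, and Hölder's inequality.
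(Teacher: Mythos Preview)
Your proposal is correct and follows essentially the same route as the paper: anisotropic Sobolev embedding (Lemma~\ref{L2.2}), the elliptic relation \eqref{3.69}, and the equations \eqref{1.1.1}--\eqref{1.1.2} to replace the uncontrolled normal--normal derivatives by conormal ones. Two small remarks. First, your split $m_1=0$, $m_2=3$ for $\|u\|_{\mathcal H^{2,\infty}}$ forces $m\geq 6$, whereas the lemma is stated for $m\geq 4$; the paper's proof reaches $m=4$ by choosing instead $m_1=2$, $m_2=1$, so that only $\|\nabla u\|_{\mathcal H^{m-1}}$ and $\|u\|_{\mathcal H^{m}}$ appear on the right. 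Second, for $\|\Delta c\|_{L^\infty}$ and $\|\Delta n\|_{L^\infty}$ the paper applies Lemma~\ref{L2.2} directly to $\Delta c$ (resp.\ $\Delta n$), landing on $\|\nabla\Delta c\|_{H^1_{co}}$ (resp.\ $\|\nabla\Delta n\|_{H^1_{co}}$), and only then invokes the equation to bound that conormal norm; you instead substitute the equation first and bound each piece in $L^\infty$. Both orders work and produce the same polynomial degree in $N_m$.
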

\begin{proof}
In view of Lemma \ref{L2.2} and \eqref{3.69}, we obtain that
\begin{align}
\|n\|_{W^{1,\infty}}^2+\|\nabla n\|_{\mathcal{H}^{1,\infty}}^2+\|u\|_{\mathcal{H}^{2,\infty}}^2+\|\nabla c\|_{\mathcal{H}^{1,\infty}}^2+\|c\|_{\mathcal{H}^{1,\infty}}^2\leq D N_m(t).\label{3.156}
\end{align}
Hence, we get \eqref{3.152} and \eqref{3.154}. Also, by virtue of Lemma \ref{L2.2} and \eqref{3.69}, we have
\begin{align}
\|\nabla^2c\|^2_{L^{\infty}}\leq\,& D(\|\Delta c\|^2_{L^{\infty}}+\|\nabla c\|^2_{H^{1,\infty}_{co}})\nonumber\\
\leq\,& D(\|\nabla\Delta c\|^2_{H^1_{co}}+\|\Delta c\|^2_{H^2_{co}}+\|\nabla c\|^2_{H^3_{co}}).\label{3.157}
\end{align}
Due to the equations \eqref{1.1.2} and \eqref{3.156}, we deduce that
\begin{align}
\|\nabla\Delta c\|^2_{H^1_{co}}\leq D (N_m(t)+N_m^2(t)).\label{3.158}
\end{align}
The combination of \eqref{3.156}-\eqref{3.158} yields \eqref{3.153}.
Based on the equation \eqref{1.1.2}, \eqref{3.153} and \eqref{3.156}, we can easily get
\begin{align}
\|\nabla\Delta c\|_{L^\infty}^2\leq D\|\nabla(c_t+u\cdot\nabla c+nc)\|_{L^\infty}^2\leq D (N_m(t)+N_m^2(t)),\nonumber
\end{align}
which gives \eqref{3.155}. Finally, similar to \eqref{3.157}, we have
\begin{align}
\|\nabla^2n\|^2_{L^{\infty}}\leq\,& D(\|\nabla\Delta n\|^2_{H^1_{co}}+\|\Delta n\|^2_{H^2_{co}}+\|\nabla n\|^2_{H^3_{co}}).\label{3.159}
\end{align}
By virtue of the equation \eqref{1.1.1} and \eqref{3.152}-\eqref{3.156}, we obtain that
\begin{align}
\|\nabla\Delta n\|^2_{H^1_{co}}\leq D(N_m(t)+N_m^2(t)+N_m(t)\|\nabla\Delta c\|^2_{H^1_{co}}).\label{3.160}
\end{align}
Furthermore, with the help of the equation \eqref{1.1.2}, we arrive at
\begin{align}
\|\nabla\Delta c\|^2_{H^1_{co}}\leq\,& D(\|\nabla c\|_{\mathcal{H}^2}^2+\|\nabla(u\cdot\nabla c)\|^2_{H^1_{co}}+\|\nabla(nc)\|^2_{H^1_{co}})\nonumber\\
\leq\,& D(N_m(t)+N_m^2(t)).\label{3.161}
\end{align}
From \eqref{3.156} and \eqref{3.159}-\eqref{3.161}, we get \eqref{3.151}.
\end{proof}

Finally, we prove the estimate for $\|\nabla u\|_{\mathcal{H}^{1,\infty}}$.
\begin{Lemma}\label{L3.13} For $m\geq6$, we have the following estimate:
\begin{align}
\|\nabla u\|_{\mathcal{H}^{1,\infty}}^2\leq&\, D\big{(}N_m(0)+N_m(t)+(1+P(N_m(t)))\nonumber\\
&\times\int^t_0P(N_m(\tau))d\tau\big{)}+\delta\epsilon\int^t_0\|\nabla^2u\|^2_{\mathcal{H}^{4}}\,d\tau,\label{3.162}
\end{align}
where $\delta$ is a small enough constant.
\end{Lemma}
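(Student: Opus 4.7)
The plan is to adapt the strategy of Lemma \ref{L3.10} but work at the $L^\infty$ level rather than $L^2$. Since $\partial_\nu u$ does not vanish on $\partial\Omega$, a direct maximum-principle type bound on $\nabla u$ is not available. Instead, by \eqref{3.120} and \eqref{3.121}, controlling $\|\nabla u\|_{\mathcal{H}^{1,\infty}}$ reduces (modulo terms already bounded by $N_m(t)$) to controlling $\|\eta\|_{\mathcal{H}^{1,\infty}}$, where $\eta=\chi\Pi((\nabla u+(\nabla u)^t)\nu)+2\zeta\chi\Pi u$ vanishes on $\partial\Omega$. So I would introduce $\widetilde{\eta}:=\mathcal{Z}^\alpha\eta$ for each multi-index $|\alpha|\leq 1$; this object inherits the homogeneous Dirichlet condition on the boundary and satisfies a convection-diffusion equation of the form
\begin{equation*}
\widetilde{\eta}_t-\epsilon\Delta\widetilde{\eta}+u\cdot\nabla\widetilde{\eta}=\mathcal{Z}^\alpha F-2\mathcal{Z}^\alpha\bigl(\chi\Pi(\nabla^2 p\,\nu)\bigr)+[\text{commutators}],
\end{equation*}
exactly as in the derivation of \eqref{3.133}, with $F=F^b+F^\chi+F^\kappa$ as before.

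Next I would invoke the $L^\infty$ maximum-principle estimate for the convection-diffusion semigroup (Lemma 14 of \cite{MR}), which gives
\begin{equation*}
\|\widetilde{\eta}(t)\|_{L^\infty}\leq D\,\Bigl(\|\widetilde{\eta}(0)\|_{L^\infty}+\int_0^t\|\text{RHS}\|_{L^\infty}\,d\tau\Bigr)
\end{equation*}
once the right-hand side is put into a form amenable to this estimate. The source terms $\mathcal{Z}^\alpha F^b$, $\mathcal{Z}^\alpha F^\chi$, $\mathcal{Z}^\alpha F^\kappa$ and the commutator $[\mathcal{Z}^\alpha, u\cdot\nabla]\eta$ are pointwise products of low-order conormal derivatives of $u$, $n$, $\nabla c$ and of $\nabla u$, $\epsilon\nabla^2 u$, $\nabla p$; with $m\geq 6$, Lemma \ref{L2.2} together with \eqref{3.156} and Lemma \ref{L3.12} bounds each such product in $L^\infty_{t,x}$ by $P(N_m(t))$, possibly accompanied by an $\epsilon$-multiplied viscous term.

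For the pressure term $\mathcal{Z}^\alpha(\chi\Pi(\nabla^2 p\,\nu))$ I would use the splitting $p=p_1+p_2$ from Lemma \ref{L3.4}. The Euler part $p_1$ solves a Neumann problem with source quadratic in $u$ and linear in $n\nabla\phi$, so standard elliptic regularity together with the $\mathcal{H}^m$ control from \eqref{3.150} gives a pointwise bound via anisotropic Sobolev embedding. The Navier-Stokes part $p_2$ is harmonic with Neumann data $\epsilon\Delta u\cdot\nu$; since $\widetilde{\eta}=0$ on $\partial\Omega$, the duality trick used in \eqref{3.129} should be replaced by integration by parts inside the Duhamel representation of the $L^\infty$ estimate, producing the factor $\epsilon^{1/2}$ that allows $\epsilon\|\nabla^2 u\|^2_{\mathcal{H}^{4}}$ to be absorbed with a small constant $\delta$.

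The main obstacle will be the viscous forcing: terms like $\epsilon\Delta\chi\cdot\text{(lower order)}$ in $F^\chi$ and the pressure contribution from $p_2$ produce factors of $\epsilon$ multiplying $\nabla^2 u$, which are not directly in $L^\infty$. Following \cite{MR}, these must be handled via a parabolic smoothing bound that trades $\epsilon\|\nabla^2 u\|^2_{\mathcal{H}^4}$ (with small constant $\delta$) for the required $L^\infty$ control after taking $L^2$ in time; this is exactly the last term appearing on the right-hand side of \eqref{3.162}. Combining the resulting $\widetilde{\eta}$ bound with \eqref{3.120}-\eqref{3.121} and with the conormal estimates from Lemmas \ref{L3.4}--\ref{L3.11} and Lemma \ref{L3.12} then yields \eqref{3.162}.
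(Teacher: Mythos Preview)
Your proposal has a genuine gap in the treatment of the pressure. By reusing the quantity $\eta=\chi\Pi((\nabla u+(\nabla u)^t)\nu)+2\zeta\chi\Pi u$ from Lemma~\ref{L3.10}, the equation you work with carries the source $2\chi\Pi(\nabla^2 p\,\nu)$; see \eqref{3.123}. In the $L^2$ argument of Lemma~\ref{L3.10} this term is handled by integrating by parts against $\mathcal{Z}^\alpha\eta$ (which vanishes on $\partial\Omega$), trading $\nabla^2 p_2$ for $\nabla p_2$ at the cost of a factor $\|\nabla\eta\|$. That mechanism is unavailable in the $L^\infty$ framework: Lemma~\ref{L3.14} (Lemma~14 of \cite{MR}) is a Duhamel/maximum-principle bound requiring the source $\mathcal{S}$ itself to lie in $\mathcal{H}^{1,\infty}$, and there is no test function against which to integrate by parts. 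Your sentence ``the duality trick used in \eqref{3.129} should be replaced by integration by parts inside the Duhamel representation'' does not correspond to an actual estimate. Controlling $\|\nabla^2 p_2\|_{\mathcal{H}^{1,\infty}}$ uniformly in $\epsilon$ is precisely the boundary-layer obstruction, since $\partial_\nu p_2=\epsilon\Delta u\cdot\nu$ involves a second normal derivative of $u$.

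The paper circumvents this by switching to the \emph{vorticity} quantity $\widetilde{\eta}=\chi\Pi(\omega\times\nu-2(\nabla\nu)^t u+2\zeta u)$ in normal geodesic coordinates. Because $\nabla\times\nabla p=0$, the equation for $\omega$ contains no pressure at all; the only pressure contribution to the $\widetilde{\eta}$-equation enters through the lower-order correction $-2(\nabla\nu)^t u+2\zeta u$ and is merely $\Pi\nabla p$ (one tangential derivative), which \emph{can} be bounded in $\mathcal{H}^{1,\infty}$ via Lemma~\ref{L3.11} and anisotropic embedding. Two further technical points you omit are also essential for invoking Lemma~\ref{L3.14}: the Laplacian must be written as $\partial_{zz}+\frac12\partial_z(\ln|g|)\partial_z+\Delta_{\widetilde g}$ in these coordinates (the tangential part $\Delta_{\widetilde g}$ becomes a source producing the $\epsilon\|\nabla u\|_{\mathcal{H}^{3,\infty}}$ term and hence the $\delta\epsilon\int_0^t\|\nabla^2 u\|_{\mathcal{H}^4}^2$ on the right of \eqref{3.162}), and the first-order normal term $\frac12\partial_z(\ln|g|)\partial_z\widetilde{\eta}$ must be removed by the change of unknown $\widetilde{\eta}=|g|^{-1/4}\overline{\eta}$ before Lemma~\ref{L3.14} applies.
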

\begin{proof}
We observe that, away from the boundary, the following estimate holds:
\begin{equation}
\|\beta_i\nabla u^\epsilon\|_{L^\infty}^2+\|\beta_i \mathcal{Z}\nabla u^\epsilon\|_{L^\infty}^2\leq D\,\|u^\epsilon\|_{\mathcal{H}^m},\quad m\geq 4,\nonumber
\end{equation}
where $\{\beta_i\}$ is a partition of unity subordinated to the covering \eqref{c}. In order to estimate the near boundary parts, we adopt the ideas in the Proposition $21$ of \cite{MR}. Here, we use a local parametrization in the vicinity of the boundary given by a normal geodesic system:
$$ \Psi^\nu(y,z)=
\left(
\begin{array}{c}
y\\
\psi(y)\\
\end{array}
\right)
-z\nu(y),$$
where
$$\nu(y)=\frac{1}{\sqrt{1+|\nabla\psi(y)|^2}}
\left(
\begin{array}{c}
\partial_1\psi(y)\\
 \partial_2\psi(y)\\
 -1
\end{array}
\right).$$
Now, we can extend $\nu$ and $\Pi$ in the interior by setting
$$\nu(\Psi^\nu(y,z))=\nu(y),\quad\Pi(\Psi^\nu(y,z))=\Pi(y).$$
We observe
$\partial_z=\partial_\nu$ and
$$\left(
\begin{array}{c}
\partial_{y^i}
\end{array}
\right)\Big{|}_{\Psi^\nu(y,z)}\cdot
\left(
\begin{array}{c}
\partial_z
\end{array}
\right)\Big{|}_{\Psi^\nu(y,z)}=0.\nonumber
$$
Hence, the Riemann metric $g$ has the following form
\begin{equation}\nonumber
g(y,z)=
\left(
\begin{matrix}
\widetilde{g}(y,z)&0\\
0&1\\
\end{matrix}
\right).
\end{equation}
Consequently, the Laplacian in this coordinate system reads:
\begin{equation}\nonumber
\Delta f=\partial_{zz}f+\frac{1}{2}\partial_z(\ln|g|)\partial_zf+\Delta_{\widetilde{g}}f,
\end{equation}
where $|g|$ is the determinant of the matrix $g$ and $\Delta_{\widetilde{g}}$ is defined by
\begin{align}\label{3.163}
\Delta_{\widetilde{g}}f=\frac{1}{|\widetilde{g}|^{\frac{1}{2}}}\sum_{1\leq i,j\leq2}\partial_{y^i}(\widetilde{g}^{ij}|\widetilde{g}|^{\frac{1}{2}}\partial_{y^j}f).
\end{align}
Here, $\{\widetilde{g}^{ij}\}$ is the inverse matrix to $g$ and \eqref{3.163} only involves the tangential derivatives.

With these preparation, we now turn to estimate the near boundary parts. By using Lemma \ref{3.12} and \eqref{3.17}, we have
\begin{align}
\|\chi\nabla u\|_{\mathcal{H}^{1,\infty}}\leq \,D_3\,(\|\chi \Pi\partial_\nu u\|_{\mathcal{H}^{1,\infty}}+\|u\|_{\mathcal{H}^{m}}+\|\nabla u\|_{\mathcal{H}^{m-1}}).\label{3.164}
\end{align}
Hence, we need to estimate $\|\chi \Pi\partial_\nu u\|_{\mathcal{H}^{1,\infty}}$. To this end, we first introduce the vorticity
$$\omega=\nabla\times u.$$
We find that
\begin{align}\label{3.165}
\Pi(\omega\times \nu)&=\Pi(\nabla u-(\nabla u)^t)\nu\nonumber\\
&=\Pi(\partial_\nu u-\nabla(u\cdot \nu)+(\nabla \nu)^tu+u\times(\nabla\times \nu)).
\end{align}
Consequently, we have
\begin{equation}\label{3.166}
\|\chi \Pi\partial_\nu u\|_{\mathcal{H}^{1,\infty}}\leq D_3\,(\|\chi\Pi(\omega\times \nu)\|_{\mathcal{H}^{1,\infty}}+\|u\|_{\mathcal{H}^{2,\infty}}).
\end{equation}
By using \eqref{3.164} again, we get
\begin{equation}\label{3.167}
\|\chi\nabla u\|_{\mathcal{H}^{1,\infty}}\leq\, D_3\,(\|\chi\Pi(\omega\times \nu)\|_{\mathcal{H}^{1,\infty}}+\|u\|_{\mathcal{H}^{m}}+\|\nabla u\|_{\mathcal{H}^{m-1}}).
\end{equation}

In order to conclude the estimate \eqref{3.162}, we only need to estimate $\|\chi\Pi(\omega\times \nu)\|_{\mathcal{H}^{1,\infty}}$. By setting in the support of $\chi$
\begin{align}
&\widetilde{\omega}(y,z):=\omega^\epsilon(\Psi^\nu(y,z)),\quad\widetilde{u}(y,z):=u(\Psi^\nu(y,z)),\nonumber
\end{align}
we have
\begin{align*}
\widetilde{\omega}_t+(\widetilde{u})^1\partial_{y^1}\widetilde{\omega}
+(\widetilde{u})^2\partial_{y^2}\widetilde{\omega}
+\widetilde{u}\cdot n\partial_z
\widetilde{\omega}
=&\epsilon(\partial_{zz}\widetilde{\omega}+\frac{1}{2}\partial_z(\ln|g|)\partial_z\widetilde{\omega}+\Delta_{\widetilde{g}}\widetilde{\omega})+\widetilde{F}_1,\\
\widetilde{u}_t+(\widetilde{u})^1\partial_{y^1}\widetilde{u}
+(\widetilde{u})^2\partial_{y^2}\widetilde{u}+\widetilde{u}\cdot n\partial_z
\widetilde{u}=&\epsilon(\partial_{zz}\widetilde{u}+\frac{1}{2}\partial_z(\ln|g|)\partial_z\widetilde{u}+\Delta_{\widetilde{g}}\widetilde{u})\\
&-(\nabla p)\circ\Psi^\nu-(n\nabla\phi)\circ\Psi^\nu,
\end{align*}
where
\begin{align*}
\widetilde{F}_1:=F_1(\Psi^\nu(y,z)),\quad F_1:=(\omega\cdot\nabla)u-\nabla n\times\nabla\phi.
\end{align*}

By using \eqref{1.12} and \eqref{3.165} on the boundary, we have
$$\Pi(\widetilde{\omega}\times \nu)=2\Pi((\nabla \nu)^t\widetilde{u}-\zeta\widetilde{u}).$$
Consequently, we introduce the following quantity:
\begin{align}
&\widetilde{\eta}(y,z):=\chi\Pi(\widetilde{\omega}\times \nu-2(\nabla \nu)^t\widetilde{u}+2\zeta\widetilde{u}).\label{3.168.1}
\end{align}
We thus get that $\widetilde{\eta}(y,0)=0$ and that $\widetilde{\eta}$ solves the equation
\begin{align}
\!\!\!\widetilde{\eta}_t+(\widetilde{u})^1\partial_{y^1}\widetilde{\eta}
+(\widetilde{u})^2\partial_{y^2}\widetilde{\eta}
+\widetilde{u}\cdot \nu\partial_z
\widetilde{\eta}=&\epsilon(\partial_{zz}\widetilde{\eta}+\frac{1}{2}\partial_z(\ln|g|)\partial_z\widetilde{\eta})\nonumber\\
&+\chi\Pi(\widetilde{F}_1\times \nu)+\widetilde{F}^u+\widetilde{F}^\chi+\widetilde{F}^\kappa,\label{3.168}
\end{align}
where
\begin{align*}
\widetilde{F}^u=&\,2\chi\Pi((\nabla \nu)^t(\nabla p+n\nabla\phi)-\zeta\nabla p-\zeta(n\nabla\phi))\circ\Psi^\nu,\\
\widetilde{F}^\chi=&\,(((\widetilde{u})^1\partial_{y^1}+(\widetilde{u})^2\partial_{y^2}+\widetilde{u}\cdot \nu\partial_z)\chi)\Pi(\widetilde{\omega}\times \nu-2(\nabla \nu)^t\widetilde{u}+2\zeta\widetilde{u})\\
&-\epsilon(\partial_{zz}\chi+2\partial_z\chi\partial_z+\frac{1}{2}\partial_z(\ln|g|)\partial_z\chi)\Pi(\widetilde{\omega}\times \nu-2(\nabla \nu)^t\widetilde{u}+2\zeta\widetilde{u}),\\
\widetilde{F}^\kappa=&\chi(((\widetilde{u})^1\partial_{y^1}+(\widetilde{u})^2\partial_{y^2})\Pi)(\widetilde{\omega}\times \nu
-2(\nabla \nu)^t\widetilde{u}+2\zeta\widetilde{u})
+\epsilon\chi\Pi(\Delta_{\widetilde{g}}\widetilde{\omega}\times \nu)\\
&-2\epsilon\chi\Pi((\nabla \nu)^t\Delta_{\widetilde{g}}\widetilde{u})-2\chi\Pi((((\widetilde{u})^1\partial_{y^1}+(\widetilde{u})^2\partial_{y^2})(\nabla \nu)^t)\widetilde{u})\\
&+\chi\Pi(\widetilde{\omega}\times((\widetilde{u})^1\partial_{y^1}+(\widetilde{u})^2\partial_{y^2})\nu)
-2\zeta\epsilon\chi\Pi(\Delta_{\widetilde{g}}\widetilde{u}).
\end{align*}
We know that both $\Pi$ and $\nu$ do not dependent the normal variable. Due to $\Delta_{\widetilde{g}}$ only involving the tangential derivatives and the derivatives of $\chi$ compactly supported away from the boundary, we easily obtain that
\begin{align}
\|\chi\Pi(\widetilde{F}_1\times \nu)\|_{\mathcal{H}^{1,\infty}}\leq &\,D\,(\nabla u\|_{\mathcal{H}^{1,\infty}}^2+\|\nabla n \times \nabla\phi\|_{\mathcal{H}^{1,\infty}}),\label{3.169}\\
\|\widetilde{F}^u\|_{\mathcal{H}^{1,\infty}}\leq& \,D_3\,(\|\Pi \nabla p\|_{\mathcal{H}^{1,\infty}}+\|\Pi (n\nabla \phi)\|_{\mathcal{H}^{1,\infty}}),\label{3.170}\\
\|\widetilde{F}^\chi\|_{\mathcal{H}^{1,\infty}}\leq &\,D_3\,(\|u\|_{\mathcal{H}^{1,\infty}}\|u\|_{\mathcal{H}^{2,\infty}}+\epsilon\|u\|_{\mathcal{H}^{3,\infty}}),\label{3.171}\\
\|\widetilde{F}^\kappa\|_{\mathcal{H}^{1,\infty}}\leq&\, D_4\,(\|u\|_{\mathcal{H}^{1,\infty}}^2+\| \nabla u\|_{\mathcal{H}^{1,\infty}}\| u\|_{\mathcal{H}^{1,\infty}}\nonumber\\
&+\epsilon\|u\|_{\mathcal{H}^{3,\infty}}+\epsilon\|\nabla u\|_{\mathcal{H}^{3,\infty}}).\label{3.172}
\end{align}
Therefore, by using Lemmas \ref{L2.2} and \ref{L3.12}, we get that
\begin{align}\label{3.173}
\|\widetilde{F}\|_{\mathcal{H}^{1,\infty}}^2\leq \,&D_4\,\big{(}\|\Pi \nabla p\|_{\mathcal{H}^{1,\infty}}^2+\epsilon^2\|\nabla u\|_{\mathcal{H}^{3,\infty}}^2+N_m(t)+N_m(t)^2\big{)},
\end{align}
where $\widetilde{F}:=\chi\Pi(\widetilde{F}_1\times \nu)+\widetilde{F}^u+\widetilde{F}^\chi+\widetilde{F}^\kappa$.
A crucial estimate towards the proof of Lemma \ref{L3.13} is the following:
\begin{Lemma}[\!\!\cite{MR}]\label{L3.14} Consider $\rho$ a smooth solution of
\begin{align}
\rho_t+u\cdot\nabla\rho=\epsilon\partial_{zz}\rho+\mathcal{S},\quad z>0,\quad\rho(t,y,0)=0\nonumber
\end{align}
for some smooth divergence free vector field $u$ such that $u\cdot\nu$ vanishes on the boundary. Assume that $\rho$ and $\mathcal{S}$ are compactly supported in $z$. Then, we have the estimate
\begin{align*}
\|\rho\|_{\mathcal{H}^{1,\infty}}\leq \,&D\|\rho(0)\|_{\mathcal{H}^{1,\infty}}+D\int_0^t\big{\{}(\|u\|_{\mathcal{H}^{2,\infty}}+\|\partial_zu\|_{\mathcal{H}^{1,\infty}})\nonumber\\
&\times(\|\rho\|_{\mathcal{H}^{1,\infty}}
+\|\rho\|_{\mathcal{H}^{m_0+3}})+\|\mathcal{S}\|_{\mathcal{H}^{1,\infty}}\big{\}\,d\tau}
\end{align*}
for $m_0\geq2$.
\end{Lemma}
In order to use Lemma \ref{L3.14}, we shall eliminate $\partial_z(\ln|g|)\partial_z\widetilde{\eta}$ in \eqref{3.168}. We set $$\widetilde{\eta}:=\frac{1}{|g|^{\frac{1}{4}}}\overline{\eta}=\overline{\gamma}\,\overline{\eta}.$$
We note that
\begin{align}\label{3.174}
\|\widetilde{\eta}\|_{\mathcal{H}^{1,\infty}}\sim\|\overline{\eta}\|_{\mathcal{H}^{1,\infty}},
\end{align}
and $\overline{\eta}$ solve the equations
\begin{align}
\!\!\!\overline{\eta}_t+(\widetilde{u})^1\partial_{y^1}\overline{\eta}
+(\widetilde{u})^2\partial_{y^2}\overline{\eta}
+(u\cdot n)\partial_z\overline{\eta}
-\epsilon\partial_{zz}\overline{\eta}
=\mathcal{S},\label{3.175}
\end{align}
where
\begin{align}
\mathcal{S}:=\frac{1}{\overline{\gamma}}\big{(}\chi\Pi(\widetilde{F}_1\times\nu)+\widetilde{F}^u+\widetilde{F}^\chi+\widetilde{F}^\kappa+\epsilon\partial_{zz}\overline{\gamma}\,\overline{\eta}
+\frac{\epsilon}{2}\partial_z\ln|g|\partial_z\overline{\gamma}\,\overline{\eta}-(\widetilde{u}\cdot\nabla\overline{\gamma})\overline{\eta}\big{)}.
\end{align}
Applying Lemma \ref{L3.14} to \eqref{3.175}, we obtain that
\begin{align}
\|\overline{\eta}\|_{\mathcal{H}^{1,\infty}}\leq \,D\Big{\{}\|\overline{\eta}_0\|_{\mathcal{H}^{1,\infty}}
+\int_0^t\big{(}(\|u\|_{\mathcal{H}^{2,\infty}}+\|\partial_zu\|_{\mathcal{H}^{1,\infty}})\nonumber\\
\times(\|\overline{\eta}\|_{\mathcal{H}^{1,\infty}}
+\|\overline{\eta}\|_{\mathcal{H}^{m_0+3}})+\|\mathcal{S}\|_{\mathcal{H}^{1,\infty}}\big{)}d\tau\Big{\}}.\label{3.177}
\end{align}
It remains to estimate $\|\mathcal{S}\|_{\mathcal{H}^{1,\infty}}$. Due to  Lemmas \ref{L2.2}, \ref{3.11} and \eqref{3.173}, we have
\begin{align}
\int^t_0\|\mathcal{S}\|^2_{\mathcal{H}^{1,\infty}}\,d\tau
&\leq D\int^t_0\big{(}\|\Pi \nabla p^\epsilon\|^2_{\mathcal{H}^{1,\infty}}+\epsilon^2\|\nabla u^\epsilon\|^2_{\mathcal{H}^{3,\infty}}+N_m(t)+N_m(t)^2\big{)}d\tau\nonumber\\
&\leq D_{m+2}\big{(}1+P(N_m(t))\big{)}\int^t_0P(N_m(\tau))d\tau+\delta\epsilon\int^t_0\|\nabla^2u\|^2_{\mathcal{H}^{4}}\,d\tau.\label{3.178}
\end{align}
By virtue of \eqref{3.178} and Lemma \ref{3.13}, we get
\begin{align}
\|\overline{\eta}\|_{\mathcal{H}^{1,\infty}}^2\leq& D_{m+2}\Big{\{}\|\overline{\eta}_0\|_{\mathcal{H}^{1,\infty}}^2
+(1+P(N_m(t)))\int^t_0P(N_m(\tau))d\tau\nonumber\\
&+\delta\epsilon\int^t_0\|\nabla^2u\|^2_{\mathcal{H}^{4}}\,d\tau\Big{\}}.\label{3.179}
\end{align}
Finally, the combination of \eqref{3.164}, \eqref{3.167}, \eqref{3.168.1} and \eqref{3.179} yields \eqref{3.162}.
Therefore, we complete the proof of Lemma \ref{L3.14}.
\end{proof}

\subsection{Proof of Theorem \ref{Th3.1}}
It suffices to combine \eqref{3.150}, Lemma \ref{L3.12}, and Lemma \ref{L3.13}.

\section{Proof of Theorem \ref{Th1}}\label{Sec4}
In this section, we will show how to combine our a priori estimates to prove the uniform existence results. Let us fix $m\geq6$ and consider the initial data satisfy
\begin{align}\label{4.1}
\mathcal{I}_m(0)=\sup_{0<\epsilon\leq1}\|(n_0^\epsilon,c_0^\epsilon,u_0^\epsilon)\|_{\mathcal{E}^{m,\epsilon}_{CNS}}\leq \widetilde{D}_3.
\end{align}
For such initial data, we are not aware of a local existence result for the problem \eqref{1.1.1}-\eqref{1.2}, so we first need to prove the local existence results for \eqref{1.1.1}-\eqref{1.2} by using the energy estimates obtained in Section \ref{Sec3} and a classical iteration scheme. By virtue of the definition of $\mathcal{E}^{m,\epsilon}_{CNS}$, there exists a sequence of smooth approximate initial data $(n_0^{\epsilon,\delta},c_0^{\epsilon,\delta},u_0^{\epsilon,\delta})$ ($\delta$ being a regularization parameter) which has enough space regularity so that the time derivatives at initial data can be defined by the chemotaxis-Navier-Stokes system and the boundary compatibility conditions can be satisfied.

Fixed $\epsilon\in(0,1]$, we construct approximate solutions as follows:\\
(1) Define $(n^0,c^0,u^0)=(n_0^{\epsilon,\delta},c_0^{\epsilon,\delta},u_0^{\epsilon,\delta})$.\\
(2) Assume that $(n^{k-1},c^{k-1},u^{k-1})$ has been defined for $k\geq1$. Let $(n^{k},c^{k},u^{k})$ be the unique solution to the following linearized initial boundary value problem:
\begin{equation}\label{4.2}
\left\{
\begin{aligned}
&n^k_t-\Delta n^k=-u^{k-1}\cdot\nabla n^{k}-\nabla\cdot( n^{k}\cdot\nabla c^{k-1}),\\
&c^k_t-\Delta c^k=-u^{k-1}\cdot\nabla c^{k}- n^{k-1} c^{k},\\
&u^k_t-\epsilon\Delta u^k+u^{k-1}\cdot\nabla u^{k}+\nabla p^{k}=n^k\nabla\phi,\\
&\dv u^k=0,\\
&(n^{k},c^{k},u^{k})|_{t=0}=(n_0^{\epsilon,\delta},c_0^{\epsilon,\delta},u_0^{\epsilon,\delta}).
\end{aligned}
\right.
\end{equation}
in $(0,T)\times\Omega$ with the boundary conditions
\begin{align}\label{4.3}
\frac{\partial n^k}{\partial\nu}=\frac{\partial c^k}{\partial\nu}=0,\quad u^k\cdot \nu=0,\quad (Su^k\cdot \nu)_\tau=-\zeta u^k_\tau.
\end{align}
Since $n^k$, $c^k$ and $u^k$ are decoupled, the existence of the global smooth solution $(n^{k},c^{k},u^{k})$ of \eqref{4.2} and \eqref{4.3} can be obtained by using the classical methods, for example, see \cite{GTY,YHH,SAL}.

By using the a priori estimates given in Theorem \ref{Th3.1} and an induction argument, we obtain that there exist a uniform time $\widetilde{T}_1$ and constant $\widetilde{D}_4$ (independent of $\epsilon$ and $\delta$) such that it holds for $(n^{k},c^{k},u^{k}), k\geq1$ that
\begin{align}\label{4.4}
&\sup_{0\leq\tau\leq t}\big{\{}\|(n^k,c^k,u^k)\|_{\mathcal{H}^m}^2+\|\nabla(n^k,u^k)\|_{\mathcal{H}^{m-1}}^2+\|\nabla c^k\|_{\mathcal{H}^{m}}^2\nonumber\\
&+\|\Delta(n^k,c^k)\|_{\mathcal{H}^{m-1}}^2+\|\nabla u^k\|^2_{1,\infty}\big{\}}
+\epsilon\int_0^t(\|\nabla u^k\|^2_{\mathcal{H}^{m}}+\|\nabla^2 u^k\|^2_{\mathcal{H}^{m-1}})\,d\tau\nonumber\\
&+\int_0^t(\|\nabla n^k\|^2_{\mathcal{H}^{m}}+\|\Delta c^k\|^2_{\mathcal{H}^{m}}
+\|\nabla\Delta(n^k,c^k)\|^2_{\mathcal{H}^{m-1}})\,d\tau\leq\widetilde{D}_4,\quad \forall\, t\in[0,\widetilde{T}_1],
\end{align}
where $\widetilde{T}_1$ and $\widetilde{D}_4$ depend only on $\mathcal{I}_m(0)$. In view of the above uniform estimates, there exists a uniform time $\widetilde{T}_2$ (independent $\epsilon$ and $\delta$) such that $(n^{k},c^{k},u^{k})$ converges to a limit $(n^{\epsilon,\delta},c^{\epsilon,\delta},u^{\epsilon,\delta})$ as $k\rightarrow+\infty$ in the following strong sense:
\begin{align*}
(n^{k},c^{k})\rightarrow(n^{\epsilon,\delta},c^{\epsilon,\delta})\quad& \text{in}\quad L^\infty(0,\widetilde{T}_2;H^1),\\
u^k\rightarrow u^{\epsilon,\delta}\quad &\text{in}\quad L^\infty(0,\widetilde{T}_2;L^2), \\
\nabla u^k\rightarrow \nabla u^{\epsilon,\delta}\quad &\text{in}\quad L^2(0,\widetilde{T}_2;L^2).
\end{align*}
It is easy to deduce that $(n^{\epsilon,\delta},c^{\epsilon,\delta},u^{\epsilon,\delta})$ is a weak solution to the system \eqref{1.1.1}-\eqref{1.2} with the initial data $(n_0^{\epsilon,\delta},c_0^{\epsilon,\delta},u_0^{\epsilon,\delta})$. Furthermore, due to the lower semi-continuity of norms, we obtain that
\begin{align}\label{4.5}
&\sup_{0\leq\tau\leq t}(\|(n^{\epsilon,\delta},c^{\epsilon,\delta},u^{\epsilon,\delta})\|_{\mathcal{H}^m}^2+\|\nabla(n^{\epsilon,\delta},u^{\epsilon,\delta})\|_{\mathcal{H}^{m-1}}^2+\|\nabla c^{\epsilon,\delta}\|_{\mathcal{H}^{m}}^2\nonumber\\
&+\|\Delta(n^{\epsilon,\delta},c^{\epsilon,\delta})\|_{\mathcal{H}^{m-1}}^2+\|\nabla u^{\epsilon,\delta}\|^2_{1,\infty})
+\epsilon\int_0^t(\|\nabla u^{\epsilon,\delta}\|^2_{\mathcal{H}^{m}}+\|\nabla^2 u^{\epsilon,\delta}\|^2_{\mathcal{H}^{m-1}})\,d\tau\nonumber\\
&+\int_0^t(\|\nabla n^{\epsilon,\delta}\|^2_{\mathcal{H}^{m}}+\|\Delta c^{\epsilon,\delta}\|^2_{\mathcal{H}^{m}}
+\|\nabla\Delta(n^{\epsilon,\delta},c^{\epsilon,\delta})\|^2_{\mathcal{H}^{m-1}})\,d\tau\leq\widetilde{D}_4,\quad \forall\, t\in[0,\widetilde{T}_2].
\end{align}
Based on the uniform estimate \eqref{4.5} for $(n^{\epsilon,\delta},c^{\epsilon,\delta},u^{\epsilon,\delta})$, we can pass the limit $\delta\rightarrow 0$ to get a strong solution $(n^{\epsilon},c^{\epsilon},u^{\epsilon})$ of the system \eqref{1.1.1}-\eqref{1.2} with initial data $(n_0^{\epsilon},c_0^{\epsilon},u_0^{\epsilon})$ satisfying \eqref{4.1} by using a strong compactness arguments. Indeed, it follows from \eqref{4.5} that $(n^{\epsilon,\delta},c^{\epsilon,\delta},u^{\epsilon,\delta},\nabla c^{\epsilon,\delta})$ is bounded uniformly in $L^\infty(0,\widetilde{T}_2;H^{m}_{co})$ while $(\nabla n^{\epsilon,\delta}, \nabla u^{\epsilon,\delta},\partial_t\nabla c^{\epsilon,\delta})$ is bounded uniformly in $L^\infty(0,\widetilde{T}_2;H^{m-1}_{co})$, $\partial_t\nabla n^{\epsilon,\delta}$ is bounded uniformly in $L^\infty(0,\widetilde{T}_2;H^{m-2}_{co})$,
  $(\Delta n^{\epsilon,\delta},\Delta c^{\epsilon,\delta})$ bounded uniformly in $L^\infty(0,\widetilde{T}_2;H^{m-1}_{co})$,  and $(\partial_tn^{\epsilon,\delta},\partial_tc^{\epsilon,\delta},\partial_tu^{\epsilon,\delta})$ bounded uniformly in $L^\infty(0,\widetilde{T}_2;H^{m-1}_{co})$. Then, we obtain that $(n^{\epsilon,\delta},c^{\epsilon,\delta},u^{\epsilon,\delta},\nabla c^{\epsilon,\delta})$ is compact in $\mathcal{C}(0,\widetilde{T}_2;H^{m-1}_{co})$ and $\nabla n^{\epsilon,\delta}$ compact in $\mathcal{C}(0,\widetilde{T}_2;H^{m-2}_{co})$ by using the strong compactness argument. In particular, there exists a sequence $\delta_k\rightarrow0^+$, $(n^{\epsilon},c^{\epsilon},u^{\epsilon},\nabla c^{\epsilon})\in\mathcal{C}(0,\widetilde{T}_2;H^{m-1}_{co})$ and $\nabla n^{\epsilon}\in\mathcal{C}(0,\widetilde{T}_2;H^{m-2}_{co})$ such that
\begin{align*}
&\nabla n^{\epsilon,\delta_k}\rightarrow\nabla n^{\epsilon} \quad\text{in}\quad\mathcal{C}(0,\widetilde{T}_2;H^{m-2}_{co})\quad \text{as}\quad \delta_k\rightarrow0^+,\\
&(n^{\epsilon,\delta_k},c^{\epsilon,\delta_k},u^{\epsilon,\delta_k},\nabla c^{\epsilon,\delta_k})\rightarrow(n^{\epsilon},c^{\epsilon},u^{\epsilon},\nabla c^{\epsilon}) \quad\text{in}\quad\mathcal{C}(0,\widetilde{T}_2;H^{m-1}_{co})\quad \text{as}\quad \delta_k\rightarrow0^+.
\end{align*}
Moreover, applying the lower semi-continuity of norms to \eqref{4.5}, we obtain the bounds \eqref{4.5} for $(n^{\epsilon},c^{\epsilon},u^{\epsilon})$. In view of \eqref{4.5} and Lemma \ref{L2.2}, we have that
\begin{align}
\sup_{0\leq\tau\leq \widetilde{T}_2}\|(n^{\epsilon,\delta_k},c^{\epsilon,\delta_k},u^{\epsilon,\delta_k},\nabla n^{\epsilon,\delta_k},\nabla c^{\epsilon,\delta_k})-(n^{\epsilon},c^{\epsilon},u^{\epsilon},\nabla n^\epsilon,\nabla c^\epsilon)\|_{L^\infty}\rightarrow0.
\end{align}
Hence, it is easy to deduce that $(n^{\epsilon},c^{\epsilon},u^{\epsilon})$ is a weak solution of the chemotaxis-Navier-Stokes equations. The uniqueness of the solution $(n^{\epsilon},c^{\epsilon},u^{\epsilon})$ comes directly from the Lipschitz regularity of solution. Therefore, the whole family $(n^{\epsilon,\delta},c^{\epsilon,\delta},u^{\epsilon,\delta})$ converges to $(n^{\epsilon},c^{\epsilon},u^{\epsilon})$. Taking $\widetilde{T}_0=\widetilde{T}_2$ and $\widetilde{D}_1=\widetilde{D}_4$, we complete the proof of Theorem \ref{Th1}.

\section{The proof of Theorem \ref{Th2}}\label{Sec5}
We can use the compactness argument that is almost the same as the one needed for the proof of Theorem \ref{Th1} to prove Theorem \ref{Th2}. Hence we omit the details here.

\medskip
{\bf Acknowledgements:} I am very grateful to Professor Fucai Li for his valuable suggestions and encouragement during preparing this paper. This paper is supported by NSFC (Grant No.11271184).


\end{document}